\numberwithin{figure}{section}
\DeclareFontFamily{OMS}{rsfs}{\skewchar\font'60}
\DeclareFontShape{OMS}{rsfs}{m}{n}{<-5>rsfs5 <5-7>rsfs7 <7->rsfs10 }{}
\DeclareSymbolFont{rsfs}{OMS}{rsfs}{m}{n}
\DeclareSymbolFontAlphabet{\scr}{rsfs}
\DeclareSymbolFontAlphabet{\scr}{rsfs}
\DeclareMathOperator{\Aut}{Aut}
\DeclareMathOperator{\codim}{codim}
\DeclareMathOperator{\coker}{coker}
\DeclareMathOperator{\Ext}{Ext}
\DeclareMathOperator{\Id}{Id}
\DeclareMathOperator{\Pic}{Pic}
\DeclareMathOperator{\rank}{rank}
\DeclareMathOperator{\reg}{reg}
\DeclareMathOperator{\Spec}{Spec}
\DeclareMathOperator{\Sym}{Sym}
\newcommand{\sA}{\scr{A}}
\newcommand{\sB}{\scr{B}}
\newcommand{\sC}{\scr{C}}
\newcommand{\sE}{\scr{E}}
\newcommand{\sF}{\scr{F}}
\newcommand{\sI}{\scr{I}}
\newcommand{\sL}{\scr{L}}
\newcommand{\sN}{\scr{N}}
\newcommand{\sO}{\scr{O}}
\newcommand{\bA}{\mathbb{A}}
\newcommand{\bC}{\mathbb{C}}
\newcommand{\bF}{\mathbb{F}}
\newcommand{\bG}{\mathbb{G}}
\newcommand{\bN}{\mathbb{N}}
\newcommand{\bP}{\mathbb{P}}
\newcommand{\bR}{\mathbb{R}}
\newcommand{\bZ}{\mathbb{Z}}
\theoremstyle{plain}
\newtheorem{thm}{Theorem}[section]
\newtheorem{cor}[thm]{Corollary}
\newtheorem{defn}[thm]{Definition}
\newtheorem{fact}[thm]{Fact}
\newtheorem{lem}[thm]{Lemma}
\newtheorem{prop}[thm]{Proposition}
\theoremstyle{remark}
\newtheorem{assumption}[thm]{Assumption}
\newtheorem{asswlog}[thm]{Assumption w.l.o.g.}
\newtheorem{claim}[thm]{Claim}
\newtheorem{c-n-d}[thm]{Claim and Definition}
\newtheorem{construction}[thm]{Construction}
\newtheorem{computation}[thm]{Computation}
\newtheorem{explanation}[thm]{Explanation}
\newtheorem{notation}[thm]{Notation}
\newtheorem{obs}[thm]{Observation}
\newtheorem{rem}[thm]{Remark}
\newtheorem{question}[thm]{Question}
\newtheorem*{rem-nonumber}{Remark}
\newtheorem{setting}[thm]{Setting}
\numberwithin{equation}{thm}
\setlist[enumerate]{label=(\thethm.\arabic*), before={\setcounter{enumi}{\value{equation}}}, after={\setcounter{equation}{\value{enumi}}}}
\def\clap#1{\hbox to 0pt{\hss#1\hss}}
\newcommand\CounterStep{\addtocounter{thm}{1}\setcounter{equation}{0}}
\newcommand{\factor}[2]{\left. \raise 2pt\hbox{$#1$} \right/\hskip -2pt\raise -2pt\hbox{$#2$}}%
\definecolor{linkred}{rgb}{0.7,0.2,0.2}
\definecolor{linkblue}{rgb}{0,0.2,0.6}
\newcommand{\Publication}[1]{}
\DeclareFontFamily{OT1}{pzc}{}
\DeclareFontShape{OT1}{pzc}{m}{it}{<-> s * [1.195] pzcmi7t}{}
\DeclareMathAlphabet{\mathscr}{OT1}{pzc}{m}{it}
\newcommand{\tensor}{\otimes}
\newcommand{\isomto}{{\stackrel{\sim}{\;\longrightarrow\;}}}
\newcommand{\isomt}{{\stackrel{{\scriptscriptstyle{\sim}}}{\;\rightarrow\;}}}
\newcommand{\gm}{{{\mathbf G}_{m}}}
\newcommand{\et}{\text{ét}}
\newcommand{\ho}[1]{\mathscr{H}({#1})}
\newcommand{\bpi}{\boldsymbol{π}}
\newcommand{\Nis}{\operatorname{Nis}}
\newcommand{\Zar}{\operatorname{Zar}} 
\newcommand{\Sm}{\mathscr{Sm}}
\newcommand{\Spc}{\mathscr{Spc}}
\newcommand{\K}{{{\mathbf K}}}
\newcommand{\Grass}{\ensuremath{\mathscr{Gr}_n(k)}}
\newcommand{\aoequiv}{\ensuremath{\simeq_{\bA^1}}}
\newcommand{\hcob}[1]{\ensuremath{\mathscr{#1}}}
\DeclareMathOperator{\Bs}{Bs}
\DeclareMathOperator{\sProj}{\bf Proj}
\DeclareMathOperator{\Proj}{Proj}
\theoremstyle{remark}
\newtheorem{conventions}[thm]{Conventions}
\title{Comparing $\bA^1$-$h$-cobordism and $\bA^1$-weak equivalence}
\date{\today}
\author{Aravind Asok} %
\address{Aravind Asok, Department of Mathematics, University of Southern
  California, 3620 S. Vermont Ave KAP 104, Los Angeles CA, 90089-2532} %
\email{\href{mailto:asok@usc.edu}{asok@usc.edu}}
\urladdr{\href{http://www-bcf.usc.edu/~asok}{http://www-bcf.usc.edu/$\sim$asok}}
\author{Stefan Kebekus} %
\address{Stefan Kebekus, Mathematisches Institut, Albert-Ludwigs-Universität,
  Eckerstr.  1, 79104 Freiburg and University of Strasbourg Institute for
  Advanced Study (USIAS), Strasbourg, France}
\email{\href{mailto:stefan.kebekus@math.uni-freiburg.de}{stefan.kebekus@math.uni-freiburg.de}}
\urladdr{\href{http://home.mathematik.uni-freiburg.de/kebekus}{http://home.mathematik.uni-freiburg.de/kebekus}}
\author{Matthias Wendt} %
\address{Matthias Wendt, Fakultät für Mathematik, Universität Duisburg-Essen,
  Thea-Leymann-Strasse 9, 45127 Essen}
\email{\href{mailto:matthias.wendt@uni-due.de}{matthias.wendt@uni-due.de}}
\thanks{Aravind Asok was partially supported by National Science Foundation
  Awards DMS-1254892.  Stefan Kebekus was supported in part by the
  DFG-Forschergruppe 790.  He also acknowledges support through a joint
  fellowship of the Freiburg Institute of Advanced Studies (FRIAS) and the
  University of Strasbourg Institute for Advanced Study (USIAS). Matthias Wendt
  was partially supported by the Alexander-von-Humboldt-Stiftung and by the
  DFG-Sonderforschungsbereich SFB/TR~45.}
\keywords{motivic homotopy theory, moduli spaces, vector bundles, projective
  plane, $\bA^1$-$h$-cobordism}
\subjclass[2010]{14D20,14F42,57R22}
\definecolor{linkred}{rgb}{0.7,0.2,0.2}
\definecolor{linkblue}{rgb}{0,0.2,0.6}
\begin{document}

\maketitle

%
%

\begin{abstract}
  We study the problem of classifying projectivizations of rank-two vector
  bundles over $\bP²$ up to various notions of equivalence that arise naturally
  in $\bA¹$-homotopy theory, namely $\bA¹$-weak equivalence and
  $\bA¹$-$h$-cobordism.

  First, we classify such varieties up to $\bA¹$-weak equivalence: over
  algebraically closed fields having characteristic unequal to two the
  classification can be given in terms of characteristic classes of the
  underlying vector bundle.  When the base field is $\bC$, this classification
  result can be compared to a corresponding topological result and we find that
  the algebraic and topological homotopy classifications agree.

  Second, we study the problem of classifying such varieties up to
  $\bA¹$-$h$-cobordism using techniques of deformation theory.  To this end, we
  establish a deformation rigidity result for $\bP¹$-bundles over $\bP²$ which
  links $\bA¹$-$h$-cobordisms to deformations of the underlying vector bundles.
  Using results from the deformation theory of vector bundles we show that if
  $X$ is a $\bP¹$-bundle over $\bP²$ and $Y$ is the projectivization of a direct
  sum of line bundles on $\bP²$, then if $X$ is $\bA¹$-weakly equivalent to $Y$,
  $X$ is also $\bA¹$-$h$-cobordant to $Y$.

  Finally, we discuss some subtleties inherent in the definition of
  $\bA¹$-$h$-cobordism.  We show, for instance, that direct $\bA¹$-$h$-cobordism
  fails to be an equivalence relation.
\end{abstract}

\setcounter{tocdepth}{1}
\tableofcontents

%
%

\section{Introduction}

In this note, we study the relation of two classification problems in the
topology of algebraic varieties.  On the one hand, there is the problem of
classifying smooth proper varieties over a fixed field $k$ up to $\bA¹$-weak
equivalence.  We refer to this as the $\bA¹$-homotopy classification problem.
On the other hand, there is the problem of classifying smooth proper varieties
having a fixed $\bA¹$-homotopy type.  This is an analogue of the surgery
problem in differential topology.  These problems were initially studied in
\cite{AM} for varieties of dimension at most two.  For this the notion of
$\bA¹$-$h$-cobordism of smooth proper varieties was introduced as an algebraic
replacement of $h$-cobordism of smooth manifolds.  By definition, varieties that
are $\bA¹$-$h$-cobordant are $\bA¹$-weakly equivalent and in \cite{AM} examples
are produced to show that $\bA¹$-$h$-cobordant varieties need not be isomorphic.

The present work takes the next step, studying these classification problems in
dimension three.  The varieties we consider are projectivizations of rank-two
vector bundles on the projective plane $\bP²$ over a fixed base field, which
will be suppressed from the notation.

\subsection{Classification up to $\bA¹$-weak equivalence}

As a first result in this direction, we can provide a complete classification of
such varieties up to $\bA¹$-weak equivalence, at least for certain
base fields.

\begin{thm}[= Theorem~\vref{thm:a1homotopyp1bundles}]\label{thmintro:main1}
  Assume $k$ is an algebraically closed field having characteristic unequal to
  two.  If $\sE$ and $\sF$ are two vector bundles over the projective plane over
  $\bP²$, each of rank two, then the following are equivalent.
  \begin{enumerate}
  \item The pairs of Chern classes $\bigl( c_1(\sE), c_2(\sE) \bigr)$ and
    $\bigl( c_1(\sF), c_2(\sF) \bigr)$ are in the same orbit for the action of
    $\Pic(\bP²)$ on $\Pic(\bP²)⨯ CH²(\bP²)$ induced from twisting by
    line bundles, cf.\ Theorem~\ref{thm:pgl2torsors} and
    Corollary~\ref{cor:pgl2torsorsonp2}.
  \item There is an $\bA¹$-weak equivalence $\bP_{\bP²}(\sE) \aoequiv
    \bP_{\bP²}(\sF)$.
  \end{enumerate}
\end{thm}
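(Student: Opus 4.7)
The implication (1) $\Rightarrow$ (2) is essentially immediate: by Theorem~\ref{thm:pgl2torsors} and Corollary~\ref{cor:pgl2torsorsonp2}, the $\Pic(\bP²)$-orbit of Chern classes classifies the $\mathrm{PGL}_2$-torsor underlying a rank-two vector bundle on $\bP²$, and over an algebraically closed field the Brauer group of $\bP²$ vanishes, so agreeing orbits produce honestly isomorphic projective bundles $\bP_{\bP²}(\sE) \cong \bP_{\bP²}(\sF)$ as schemes, which is strictly stronger than $\bA¹$-weak equivalence.

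The real content lies in the converse (2) $\Rightarrow$ (1). The plan is to extract the orbit invariants from a purely cohomological $\bA¹$-invariant, namely the graded integral motivic cohomology ring; for the cellular varieties at hand this coincides with the Chow ring. By the projective bundle formula one has a presentation
\begin{equation*}
  CH^{\ast}\bigl(\bP_{\bP²}(\sE)\bigr) \;\cong\; \bZ[h,\xi_{\sE}]\bigm/\bigl(h^3,\; \xi_{\sE}^2 + a\, h\xi_{\sE} + b\, h^2\bigr),
\end{equation*}
where $h = \pi^\ast c_1(\sO_{\bP²}(1))$, $\xi_{\sE} = c_1(\sO_{\bP(\sE)}(1))$, and $c_1(\sE) = ah$, $c_2(\sE) = bh^2$; an $\bA¹$-weak equivalence induces a bigraded ring isomorphism between the analogous presentations for $\sE$ and $\sF$.

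The key technical step is to characterize the pullback class $h$ intrinsically inside $CH^1$, up to sign. A direct computation in the above presentation yields
\begin{equation*}
  (\alpha h + \beta\, \xi_{\sE})^3 \;=\; \beta\bigl(3\alpha^2 - 3a\alpha\beta + (a^2 - b)\beta^2\bigr)\, h^2 \xi_{\sE},
\end{equation*}
so for generic $(a,b)$ the only primitive classes in $CH^1$ whose cube vanishes are $\pm h$. Any ring isomorphism then necessarily maps $h' \mapsto \pm h$, and a complementary generator in $CH^1$ is well-defined only modulo $\bZ h$. Unwinding the transformed defining relation in each of the four sign cases shows that any such identification forces the corresponding Chern class data to be related by the substitution $\xi_{\sE} \mapsto \xi_{\sE} + dh$, which is precisely the Chern class shift induced by the twist $\sE \mapsto \sE \otimes \sO(d)$; this recovers the $\Pic(\bP²)$-orbit action of the statement.

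The main obstacle is the arithmetically degenerate situation in which the cubic form above admits additional primitive integer zeros, i.e., when $3(4b - a^2)$ is a perfect square of suitable form. Geometrically, the same threefold may then fiber as a $\bP¹$-bundle over $\bP²$ in more than one essentially distinct way, and one must verify that the alternative fibrations produce Chern class pairs lying in a single $\Pic(\bP²)$-orbit, so that the classification statement remains uniform. I expect this case analysis, together with the reduction from a general $\bA¹$-weak equivalence to a motivic-cohomological identification of the required strength (where the characteristic-two hypothesis is likely to enter), to constitute the technical heart of the argument.
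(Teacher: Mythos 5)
Your proposed proof of the forward implication is incorrect, and your proof of the converse is incomplete at precisely the point you yourself flag as the hard part.

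For (1) $\Rightarrow$ (2), you claim that agreeing $\Pic(\bP^2)$-orbits of Chern classes force $\bP_{\bP²}(\sE) \cong \bP_{\bP²}(\sF)$ as schemes. This is false. Strømme's moduli spaces $M(d)$ (cf.\ Fact~\ref{fact:exMd}) show there are positive-dimensional families of pairwise non-isomorphic rank-two bundles on $\bP^2$ with fixed Chern classes, and for large splitting type the uniqueness result Theorem~\ref{thm:uniq} forces any scheme isomorphism of their projectivizations to descend to an isomorphism of bundles up to twist — so the projectivizations are genuinely non-isomorphic. The Brauer-group argument also does not do what you want: vanishing of $\mathrm{Br}(\bP^2)$ says every $PGL_2$-torsor lifts to a $GL_2$-torsor, not that the class in $[\bP^2, BPGL_2]_{\bA^1}$ determines the torsor. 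Indeed, Theorem~\ref{thm:pgl2torsors} and Corollary~\ref{cor:pgl2torsorsonp2} classify $\bA^1$-\emph{homotopy classes of maps}, and the natural map $H^1_{\Nis}(\bP^2, PGL_2) \to [\bP^2, BPGL_2]_{\bA^1}$ is surjective but far from injective. The correct mechanism, used in the paper, is Proposition~\ref{prop:vectorbundlesgiveweakequivalences} together with Corollary~\ref{cor:weaklyequivalentbundleshaveweaklyequivalenttotalspaces}: pass to a Jouanolou device $X' \to \bP^2$ (a smooth affine replacement), where by \cite[Thm.~8.1.3]{MField} equality of $\bA^1$-homotopy classes of classifying maps really does give an isomorphism of the pulled-back vector bundles, and this produces a zig-zag of $\bA^1$-weak equivalences between the total spaces — not an isomorphism over $\bP^2$.

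For (2) $\Rightarrow$ (1), your cubic-form computation agrees with Computation~\ref{comp:chow}, and the idea of extracting the orbit from the ring structure on $CH^*$ is the paper's idea as well. But your plan — to intrinsically characterize $\pm h$ as the primitive class with vanishing cube and then transform the defining relation — does break down exactly in the degenerate situations you identify, and you do not complete that case analysis. The paper sidesteps it entirely: Lemma~\ref{lem:chowring}, which rests on \cite[Ex.~5, Prop.~18]{okonek:vandeven}, asserts that the discriminant $c_1^2 - 4c_2$ of the cubic form on $\Pic$ is a complete isomorphism invariant of the graded Chow ring. Combined with the observation that $c_1^2 - 4c_2 \equiv c_1 \pmod 2$ and that twisting lets one normalize $c_1 \in \{0,1\}$, equality of discriminants immediately forces equality of normalized Chern pairs — no case analysis of special integer solutions of the cubic is needed. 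I would recommend replacing your primitive-zero argument with this discriminant argument, and replacing the forward direction wholesale with the Jouanolou-device argument.
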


To establish this result, we first provide an $\bA¹$-homotopy classification of
$PGL_2$-torsors over $\bP²$.  This classification is obtained by appeal to
techniques of obstruction theory, cf.\ Theorem~\ref{thm:pgl2torsors} and
Corollary~\ref{cor:pgl2torsorsonp2}.  Results from the theory of fiber sequences
then show that homotopies of classifying maps of Zariski locally trivial
$\bP¹$-bundles yield $\bA¹$-weak equivalences of total spaces, cf.\
Corollary~\ref{cor:weaklyequivalentbundleshaveweaklyequivalenttotalspaces}.
Conversely, using the cubic form on the Picard group and some results from
classical invariant theory, we show that if two $\bP¹$-bundles over $\bP²$ are
$\bA¹$-weakly equivalent, then the underlying vector bundles have the same
classifying maps.  The weak equivalence between total spaces is then induced from
a homotopy between those classifying maps; roughly speaking ``every $\bA¹$-weak
equivalence between the total spaces of Zariski locally trivial $\bP¹$-bundles
is induced by a fiber homotopy equivalence''.

\subsection{Classification up to cobordism}

The second part of the paper is devoted to understanding $\bA¹$-$h$-cobordism
classes within a given $\bA¹$-homotopy type in the special case of
$\bP¹$-bundles over $\bP²$.  We obtain the following partial classification
result, which exhibits some interesting subtleties of the notion of
$\bA¹$-$h$-cobordism.

\begin{thm}[= Proposition~\vref{prop:cobordismclass} and Theorem~\vref{thm:a1cobx}]\label{thmintro:main2}
  Let $k$ be an algebraically closed field having characteristic unequal to two,
  and let $c_1, c_2 ∈ \bZ$ be integers.  The following results concerning
  $\bA¹$-$h$-cobordism classes of rank-two vector bundles on $\bP²$ with Chern
  classes $c_1$ and $c_2$ hold.
  \begin{enumerate}
  \item\label{il:bonny} If there exists an integer $d$ such that
    $d²-dc_1+c_2=0$, then for any two rank-two vector bundles $\sE$ and $\sF$ on
    $\bP²$ with Chern classes $c_1$ and $c_2$, the corresponding projective
    bundles $\bP_{\bP²}(\sE)$ and $\bP_{\bP²}(\sF)$ are $\bA¹$-$h$-cobordant.
    In particular, any $\bP¹$-bundle over $\bP²$ which is $\bA¹$-weakly
    equivalent to $\bP¹ ⨯ \bP²$ is also $\bA¹$-$h$-cobordant to $\bP¹ ⨯ \bP²$.

  \item\label{il:clyde} There are infinitely many rank-two vector bundles
    $\bigl( \sE_i \bigr)_{i ∈ \bN}$ on $\bP²$ with Chern classes $c_1$ and $c_2$
    such that no two of the varieties $\bP_{\bP²}(\sE_i)$ are directly
    $\bA¹$-$h$-cobordant.  In particular, direct $\bA¹$-$h$-cobordism is rather
    far from being an equivalence relation.
  \end{enumerate}
\end{thm}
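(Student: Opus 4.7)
\textbf{Plan for assertion~\ref{il:bonny}.} Under the hypothesis $d^2 - dc_1 + c_2 = 0$, the split bundle $\sE_0 := \sO_{\bP^2}(d) \oplus \sO_{\bP^2}(c_1 - d)$ has the desired Chern classes, and the Bogomolov inequality $4c_2 \geq c_1^2$ is violated since $4c_2 - c_1^2 = -(c_1 - 2d)^2 \leq 0$. Consequently every rank-two bundle $\sE$ with those Chern classes is unstable, and its Harder--Narasimhan filtration has the form $0 \to \sO_{\bP^2}(k) \to \sE \to \sI_Z(c_1 - k) \to 0$ with $\ell(Z) = k^2 - kc_1 + c_2$, which vanishes precisely when $k = d$. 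The plan is to exhibit, for each such $\sE$, a chain of one-parameter deformations of bundles of this shape that reduces $k$ stepwise to $d$, eventually reaching $\sE_0$. Each leg is realised as a flat family of rank-two bundles on $\bP^2$ parametrized by $\bA^1$, constructed by moving $Z$ inside the (connected) Hilbert scheme of points while simultaneously varying the extension class in the relative $\Ext^1$-sheaf; the deformation-rigidity theorem recalled in the introduction then promotes the projectivization of each such family into a direct $\bA^1$-$h$-cobordism, and concatenating legs yields $\bP(\sE) \aoequiv \bP(\sE_0)$.

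\textbf{Plan for assertion~\ref{il:clyde}.} The deformation-rigidity theorem asserts that a direct $\bA^1$-$h$-cobordism between $\bP(\sE)$ and $\bP(\sF)$ is witnessed by a rank-two vector bundle $\sV$ on $\bP^2 \times \bA^1$ (up to twisting by line bundles pulled back from $\bP^2$) whose restrictions at the two distinguished points recover $\sE$ and $\sF$. Thus two projectivizations are directly $\bA^1$-$h$-cobordant only if their underlying bundles can be joined by an irreducible connected curve inside the moduli stack of rank-two bundles on $\bP^2$ with the fixed Chern classes. The plan is to produce an infinite family $\{\sE_i\}$ of such bundles indexed by a discrete sheaf-theoretic invariant that is constant along such curves but takes infinitely many values. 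Using a Serre-type construction $0 \to \sO_{\bP^2} \to \sE_i \to \sI_{Z_i}(c_1) \to 0$ from $0$-dimensional subschemes $Z_i \subset \bP^2$ of length $c_2$ and increasingly complex combinatorial type (Hilbert function, minimal free resolution), one extracts infinitely many pairwise non-equivalent bundles.

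\textbf{Main obstacle.} The crux is assertion~\ref{il:clyde}: isolating a sheaf-theoretic invariant that is genuinely constant on irreducible families of rank-two bundles on $\bP^2$, as opposed to being merely upper semicontinuous. Cohomological dimensions $h^i(\bP^2, \sE(n))$ can drop on proper closed subsets of the base of a family, which a priori permits bundles with apparently distinct profiles to be joined via an intermediate family realising a third profile. The resolution is to pass to a genuinely discrete invariant such as the Betti table of the minimal free resolution of the Rao module $\bigoplus_n H^1(\bP^2, \sE(n))$, which is constant on irreducible components of the parameter space; verifying this rigidity and realising infinitely many such combinatorial types with a fixed Chern character forms the technical heart of the proof.
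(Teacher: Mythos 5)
Your proposal captures the broad outline—pass between $\bA^1$-$h$-cobordisms and vector-bundle deformations, and find a discrete obstruction to deformability—but both legs have genuine gaps relative to what is actually needed, and the paper proceeds quite differently.

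\textbf{Part~\ref{il:bonny}: the stepwise reduction does not work.}
You propose reducing the generic splitting type $k$ to $d$ one step at a time, moving the zero scheme $Z$ and the extension class together. But a type-$k$ bundle cannot, in general, be deformed to a type-$(k-1)$ bundle: Strømme's numerology (Fact~\ref{fact:locDef}) shows that $M(k;k-1)$ is nonempty only under the inequality $\binom{0}{2}\ge (k-1)^2-(k-1)c_1+c_2$, which fails for most $k$. What the paper actually does (proof of Theorem~\ref{thm:concordanceclass}) is the opposite: first deform $\sE$ so its splitting type \emph{increases} to some $N\gg 0$, then deform the subscheme $Y$ to a complete intersection, and finally deform in one jump to the split bundle; the last step rests on a careful comparison of extension classes under the map $F^2\cdot(-)$ via the Hartshorne--Serre correspondence. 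You also misattribute the bridge from concordances to $\bA^1$-$h$-cobordisms: that is the elementary Lemma~\ref{lem:concob} (essentially \cite[Prop.~3.1.5]{AM} applied to the projectivization), not deformation rigidity, which goes in the other direction.

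\textbf{Part~\ref{il:clyde}: two missing ingredients.}
First, the invariant. You acknowledge the need for something genuinely constant on irreducible families but then propose the Betti table of the Rao module, whose constancy along families is itself the statement you would need to prove (and which is false in the naive form: graded Betti numbers jump semicontinuously). The paper instead uses the \emph{generic splitting type}, which is only upper semicontinuous, and finesses this by choosing the bundles $\sE_j$ from Strømme's non-deformability result (Proposition~\ref{prop:stroemmeX}): the $\sE_j$ are picked so that they cannot lie in \emph{any} irreducible family of smaller generic type, so the splitting type is forced to be locally constant in the relevant families. Second, and more critically, your proposal does not address how to compare the two bundle families produced by deformation rigidity. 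Theorem~\ref{thm:rigidity} gives a bundle $\sE_{U_0}$ over a Zariski neighborhood $U_0$ of $0$ and another $\sE_{U_1}$ over a neighborhood $U_1$ of $1$; on the nonempty overlap $U_0\cap U_1$ (nonempty because $\bA^1$ is irreducible) there is no a priori reason the two bundle structures on the common fiber $X_t$ agree. The paper's Theorem~\ref{thm:uniq} (uniqueness of $\bP^1$-bundle structure for high splitting type, proved via an argument with the cone of curves and Fano geometry) is what forces the two structures to differ only by an automorphism of $\bP^2$ and a twist, from which the contradiction on splitting types follows. Without this uniqueness result the argument does not close.
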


These results rely on a certain deformation rigidity result, which provides a
close relation between $\bA¹$-$h$-cobordisms and $\bA¹$-deformations of the
underlying vector bundles: given an $\bA¹$-$h$-cobordism with $f^{-1}(0)$ a
given $\bP¹$-bundle over $\bP²$, there is a Zariski open neighborhood of $0 ∈
\bA¹$ over which the resulting deformation is induced from a deformation of
rank-two vector bundles over $\bP²$; see Theorem~\vref{thm:rigidity} for a
precise statement.  This result allows us to import some results of Strømme,
\cite{stroemme}, to help investigate the $\bA¹$-$h$-cobordism classification of
$\bP¹$-bundles over $\bP²$.  Assuming some condition on the Chern classes, like
those imposed in Part~\ref{il:bonny} of Theorem~\ref{thmintro:main2}, we observe
that there are ``enough" deformations of rank-two vector bundles over $\bP²$ to
guarantee that the $\bA¹$-$h$-cobordism classification is not finer than the
$\bA¹$-homotopy classification, cf.  Proposition~\ref{prop:cobordismclass}.  On
the other hand, the non-deformability results of Strømme imply the existence of
infinitely many varieties in each of the above $\bA¹$-weak homotopy types which
cannot be connected by direct $\bA¹$-$h$-cobordisms.  These observations lead to
the results spelled out in Part~\ref{il:clyde} of Theorem~\ref{thmintro:main2}.

Our $\bA¹$-$h$-cobordism classification result is incomplete because we impose
restrictions on the Chern classes of the vector bundles under consideration.
The main reason for these restrictions stems from the difficulties inherent in
providing an isomorphism or deformation classification of vector bundles on
$\bP²$.  While the explicit families of vector bundles produced in Strømme's
work are enough to prove connectedness of the ``moduli space of rank two bundles
on $\bP²$'', they do not allow us to establish the $\bA¹$-chain connectedness of
that space.  At the moment, we are unable to decide if Part~\ref{il:bonny} of
Theorem~\ref{thmintro:main2} can be extended to all projective bundles or if
there exist projective bundles which are $\bA¹$-weakly equivalent but not
$\bA¹$-$h$-cobordant.

Finally, we take a moment to indicate more abstractly the main difficulties
involved in the study of $\bA¹$-$h$-cobordism.  Our problem, phrased a bit more
broadly, is to understand all the smooth proper varieties having a fixed
$\bA¹$-homotopy type, say modulo various notions of equivalence.  Varieties in a
fixed $\bA¹$-homotopy type can appear in families.  Thus, it is natural to try
to construct a ``moduli space of scheme structures in a fixed $\bA¹$-homotopy
type." In order to analyze $\bA¹$-$h$-cobordisms, we would ideally like this
moduli problem to be representable by a smooth scheme: if that was true, then we
could try to construct $\bA¹$-$h$-cobordisms by producing maps from $\bA¹$ to
the moduli space.  However, difficulties arise involving both of these ideas.
Indeed, the moduli problem need not be representable by a smooth scheme, and it
turns out to be hard to construct $\bA¹$-$h$-cobordisms.

Already in the case of $\bP¹$-bundles over $\bP²$ the moduli problem is not
representable by a smooth scheme.  Nevertheless, after fixing an additional
invariant, the generic splitting type, one can construct suitable moduli schemes
within the $\bA¹$-homotopy type, though we make no claim that these moduli
schemes actually exhaust the $\bA¹$-homotopy type.  Indeed, it seems likely that
there are smooth projective varieties that are $\bA¹$-$h$-cobordant to
$\bP¹$-bundles over $\bP²$ but that are not themselves of this form.  In this
case, deformations of a vector bundle parameterized by the affine line give rise
to $\bA¹$-$h$-cobordisms, so there is a close connection between affine lines on
the moduli space and $\bA¹$-$h$-cobordisms.  The results of Strømme establish
that the moduli problem as a whole is connected (in the usual topology) for each
$\bA¹$-homotopy type of $\bP¹$-bundles over $\bP²$, but has infinitely many
irreducible components.  These observations lead to the failure of
$\bA¹$-$h$-cobordism to be an equivalence relation.

We summarize these observations as a slogan: $\bA¹$-$h$-cobordism is sensitive
to the geometry of moduli of scheme structures.  In fact, it seems likely that
$\bA¹$-$h$-cobordism is only well-behaved if the ``moduli space of scheme
structures'' is well-behaved, say, locally $\bA¹$-contractible.  In view of
Murphy's law for moduli spaces, \cite{Vakil}, this ``local
$\bA¹$-contractibility of the moduli space of scheme structures'' is likely to
hold, if ever, only in very special cases.

\subsection{Structure of the paper}

Section~\ref{sec:02} gathers a number of results concerning rank-two vector
bundles on $\bP²$ and their associated projective bundles.  To the best of our
knowledge, some of these results are new and might be of independent interest.
In Section~\ref{sec:03} we discuss the $\bA¹$-homotopy classification of
$PGL_2$-torsors over $\bP²$, from which we deduce the $\bA¹$-homotopy
classification of $\bP¹$-bundles over $\bP²$ in Section~\ref{sec:04}.  Then, we
turn to the more geometric equivalence relations.  We define $\bA¹$-concordance
and discuss the classification of rank-two bundles over $\bP²$ up to
$\bA¹$-concordance in Section~\ref{sec:05}; consequences of these results for
the $\bA¹$-$h$-cobordism classification of $\bP¹$-bundles over $\bP²$ are
contained in Section~\ref{sec:06}.  Finally, in Section~\ref{sec:07}, we compare
the algebraic classification results with corresponding topological
classification results in the setting of complex manifolds.

\subsection{Notation and global conventions}
\label{sec:conventions}

Throughout this paper, we work with schemes that are separated and have finite
type over an algebraically closed field $k$.  With the exception of
Section~\ref{sec:03}, the characteristic of $k$ will always be unequal to two.
Following notation from Hartshorne's book, an \emph{abstract variety} is an
integral, separated scheme of finite type over $k$.  We use the word ``sheaf''
to mean ``coherent sheaf'', unless noted otherwise.

Throughout this paper, we fix a hyperplane class $H$ on $\bP²$ and use this to
identify $\Pic(\bP²) \cong \bZ \cdot H$ and $CH²(\bP²) \cong \bZ \cdot H²$.  If
$\sE$ is a rank-two vector bundle on $\bP²$, we use these identifications to
view the Chern classes $c_i(\sE)$ as integers.

%
%

\section{Vector bundles over \texorpdfstring{$\bP²$}{P\texttwosuperior} and associated projective bundles}
\label{sec:02}

For the reader's convenience, we briefly recall in this section notation and
results pertaining to vector bundles on $\bP²$, to families of vector bundles,
and to their moduli spaces.  Section~\ref{subsec:hscorrespondence} begins by
recalling some results about the Hartshorne-Serre correspondence relating vector
bundles to codimension-two local complete intersections.  We are particularly
interested in the relative setting.  Section~\ref{subsec:bertini} recalls a
Bertini-type theorem, which appears in the work of Kleiman.
Section~\ref{sec:2B} contains a uniqueness result about projective bundle
structures, see Theorem~\ref{thm:uniq}.  Section~\ref{sec:2D} contains the
deformation rigidity result mentioned in the introduction,
Theorem~\ref{thm:rigidity}.  Finally, Section~\ref{ssec:deformation} recalls
some results about the deformation theory of vector bundles on $\bP²$.  With the
exception of Section~\ref{ssec:deformation}, which requires the notion of
\emph{type} of a vector bundle, see Definition~\ref{def:type}, these sections
are written to be independent of each other.

\subsection{The Hartshorne-Serre correspondence}
\label{subsec:hscorrespondence}

We will briefly recall the well-known correspondence between rank-two vector
bundles on a given smooth variety $X$ and codimension-two local complete
intersections $Y ⊂ X$.  The following simplified version suffices for our
purposes.

\begin{fact}[\protect{Hartshorne-Serre correspondence, \cite[Thm.~1.1]{Arrondo}}]\label{fact:arrondo1}
  Let $X$ be any smooth variety of dimension $\dim X ≥ 2$, and $Y ⊆ X$ be a
  local complete intersection of codimension two, with ideal sheaf $\sI_Y ⊂
  \sO_X$.  Let $\sL ∈ \Pic(X)$ be any line bundle such that the dualizing sheaf
  $ω_Y$ is isomorphic to $(\sL \otimes ω_X)|_Y$.  Then, there exists a
  canonically defined, functorial exact sequence
  \begin{multline}\label{eq:gfk}
    H¹ \bigl( X,\, \sL^* \bigr) → \Ext¹ \bigl( \sI_Y \otimes \sL,\, \sO_X \bigr) \\
    → H^0 \bigl( Y,\, \wedge² \sN_{X/Y}\otimes\sL^\ast|_Y \bigr) → H² \bigl(
    X,\, \sL^* \bigr).  \qed
  \end{multline}
\end{fact}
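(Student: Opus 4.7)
The assertion is a standard consequence of the local-to-global spectral sequence for $\Ext$ once one has computed the relevant local $\Ext$-sheaves. I will first compute $\sExt^i(\sI_Y\otimes\sL,\sO_X)$, then feed the result into the five-term exact sequence associated to the spectral sequence, and finally identify the third term using the hypothesis on $\omega_Y$.

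\textbf{Step 1: local $\Ext$-sheaves.} Since $Y\subseteq X$ is a local complete intersection of codimension two, the ideal sheaf $\sI_Y$ is, locally on $X$, generated by a regular sequence $(f_1,f_2)$, so the Koszul complex gives a locally free resolution of $\sO_Y$ of length two. Dualizing this Koszul resolution and using that $(\sI_Y/\sI_Y^2)^\ast=\sN_{X/Y}$ yields
\[
\sExt^i \bigl( \sO_Y,\,\sO_X \bigr)\;\cong\;
\begin{cases}
\wedge^2\sN_{X/Y} & i=2,\\
0 & i\neq 2.
\end{cases}
\]
The short exact sequence $0\to \sI_Y\to \sO_X\to \sO_Y\to 0$, applied to $\sHom(-,\sO_X)$, then gives $\sHom(\sI_Y,\sO_X)\cong\sO_X$, $\sExt^1(\sI_Y,\sO_X)\cong \wedge^2\sN_{X/Y}$ (pushed forward to $X$), and vanishing above. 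Twisting by the locally free sheaf $\sL^*$ produces
\[
\sHom \bigl( \sI_Y\otimes\sL,\sO_X \bigr)\cong\sL^\ast,\quad
\sExt^1 \bigl( \sI_Y\otimes\sL,\sO_X \bigr)\cong \wedge^2\sN_{X/Y}\otimes\sL^\ast|_Y,
\]
and $\sExt^i(\sI_Y\otimes\sL,\sO_X)=0$ for $i\geq 2$.

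\textbf{Step 2: the five-term exact sequence.} The local-to-global spectral sequence
\[
E_2^{p,q}=H^p \bigl( X,\,\sExt^q(\sI_Y\otimes\sL,\sO_X) \bigr)\;\Longrightarrow\;\Ext^{p+q} \bigl( \sI_Y\otimes\sL,\sO_X \bigr)
\]
is functorial in $\sL$ and in pairs $(X,Y)$, which will supply the asserted functoriality of the sequence. Since $\sExt^q$ vanishes for $q\geq 2$ by Step 1, its five-term exact sequence reads
\[
0\to H^1 \bigl( X,\,\sL^\ast \bigr)\to\Ext^1 \bigl( \sI_Y\otimes\sL,\sO_X \bigr)\to H^0 \bigl( X,\,\sExt^1 \bigr)\to H^2 \bigl( X,\,\sL^\ast \bigr),
\]
which already yields exactness at the three middle terms of \eqref{eq:gfk}.

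\textbf{Step 3: identification of the third term.} Because $\sExt^1(\sI_Y\otimes\sL,\sO_X)$ is supported on $Y$ and, by Step 1, agrees on $Y$ with $\wedge^2\sN_{X/Y}\otimes\sL^\ast|_Y$, we have $H^0(X,\sExt^1)=H^0(Y,\wedge^2\sN_{X/Y}\otimes\sL^\ast|_Y)$. Plugging this identification in gives exactly the sequence \eqref{eq:gfk}. The adjunction formula $\omega_Y\cong\omega_X|_Y\otimes\wedge^2\sN_{X/Y}$ together with the hypothesis $\omega_Y\cong(\sL\otimes\omega_X)|_Y$ shows that $\wedge^2\sN_{X/Y}\cong\sL|_Y$, so the third group reduces to $H^0(Y,\sO_Y)$; this fact is not needed for the bare exact sequence but is what makes the Hartshorne-Serre correspondence work, the distinguished element $1\in H^0(Y,\sO_Y)$ producing the extension class that builds the desired rank-two bundle.

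\textbf{Main obstacle.} The only nontrivial point is the local calculation in Step~1: one must justify carefully that the Koszul resolution is available (requiring the local complete intersection hypothesis) and that tensoring and dualizing commute appropriately with $\sExt$ through the locally free twist by $\sL$. Once that is in place, the rest is formal manipulation of the local-to-global spectral sequence, and functoriality in $\sL$ and in $(X,Y)$ is inherited from the spectral sequence itself.
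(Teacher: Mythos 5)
The paper states this as a \emph{Fact} and gives no proof, citing \cite[Thm.~1.1]{Arrondo} directly (the \qed appears immediately after the statement). Your argument is the standard derivation — compute $\sExt^{\bullet}(\sI_Y\otimes\sL,\sO_X)$ via the Koszul resolution and the short exact sequence $0\to\sI_Y\to\sO_X\to\sO_Y\to 0$, then read off the five-term exact sequence of the local-to-global $\Ext$ spectral sequence — and this is essentially what Arrondo does, so there is nothing substantive to compare against the source. Two small remarks: the five-term exact sequence is available unconditionally, so the observation that $\sExt^q$ vanishes for $q\ge 2$ is not actually needed to extract the displayed four-term sequence (it would only matter if one wanted to push further); and you correctly observe that the hypothesis $\omega_Y\cong(\sL\otimes\omega_X)|_Y$ plays no role in deriving the sequence as stated, only in identifying $\wedge^2\sN_{X/Y}\otimes\sL^\ast|_Y$ with $\sO_Y$ when one later constructs bundles.
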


\begin{rem}\label{ram:arrondo2}
  In the setting of Fact~\ref{fact:arrondo1}, if we assume in addition that the
  cohomology groups $H¹ \bigl( X,\, \sL^* \bigr)$ and $H² \bigl( X,\, \sL^*
  \bigr)$ vanish, then Sequence~\eqref{eq:gfk} yields a canonical isomorphism
  $\Ext¹ \bigl( \sI_Y \otimes \sL,\, \sO_X \bigr) \cong H^0 \bigl( Y,\, \wedge²
  \sN_{X/Y}\otimes\sL^\ast|_Y \bigr)$.
\end{rem}

\begin{thm}[Characterisation of locally frees, I]\label{thm:arrondo2}
  In the setting of Fact~\ref{fact:arrondo1}, given an extension of the form
  \begin{equation}\label{eq:slpt}
    \xymatrix{ %
      0 \ar[r] & \sO_X \ar[r]^s & \sE \ar[r] & \sI_Y \tensor \sL \ar[r] & 0,
    }
  \end{equation}
  then $\sE$ is locally free if any only if the section of $\wedge²
  \sN_{X/Y}\otimes\sL^\ast$ that is associated with the extension class of
  \eqref{eq:slpt} generates that sheaf.
\end{thm}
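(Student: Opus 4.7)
The plan is to reduce the statement to a local question near the points of $Y$ and then to analyse the extension \eqref{eq:slpt} by means of the Koszul resolution of $\sI_Y$. First I would note that local freeness is a Zariski-local property, and that on $X \setminus Y$ one has $\sI_Y \otimes \sL = \sL$, which is locally free of rank one; hence \eqref{eq:slpt} exhibits $\sE|_{X \setminus Y}$ as an extension of locally free sheaves, so $\sE$ is automatically locally free of rank two away from $Y$. Consequently the local freeness of $\sE$ and the non-vanishing of the associated section of $\wedge^2 \sN_{X/Y} \otimes \sL^\ast|_Y$ both only need to be checked at the points $y \in Y$.

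Next, I would fix a point $y \in Y$, choose a Zariski-open neighbourhood $U$ of $y$ on which $\sL$ is trivial, and pick local equations $f_1, f_2 \in \sO_X(U)$ for $Y \cap U$. Since $Y$ is a codimension-two local complete intersection, $(f_1, f_2)$ is a regular sequence, and we have the Koszul resolution
\begin{equation*}
  0 \longrightarrow \sO_X|_U \xrightarrow{\ \left(\begin{smallmatrix} -f_2 \\ f_1 \end{smallmatrix}\right)\ } \sO_X^{\oplus 2}|_U \xrightarrow{\ (f_1,\, f_2)\ } \sI_Y|_U \longrightarrow 0.
\end{equation*}
Applying $\Hom(-, \sO_X|_U)$ identifies the local $\Ext^1$ of $\sI_Y$ against $\sO_X$ with $\sO_X(U)/(f_1, f_2)$, and under the edge map of Fact~\ref{fact:arrondo1} the extension class of \eqref{eq:slpt} corresponds to a function $g \in \sO_X(U)$ whose restriction to $Y \cap U$ represents the associated section of $\wedge^2 \sN_{X/Y} \otimes \sL^\ast|_Y$.

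The heart of the matter is then a direct local computation. Pulling back the extension \eqref{eq:slpt} along the surjection of the Koszul resolution yields a split extension of $\sO_X^{\oplus 2}|_U$ by $\sO_X|_U$ with total space $\sO_X^{\oplus 3}|_U$; the induced lift of the Koszul differential to this total space is the map $(g,\, -f_2,\, f_1) \colon \sO_X|_U \to \sO_X^{\oplus 3}|_U$, so $\sE|_U$ is naturally isomorphic to the cokernel of this map. Such a cokernel is locally free of rank two at $y$ precisely when one of the three entries is a unit in $\sO_{X,y}$, and since $f_1(y) = f_2(y) = 0$ this is equivalent to $g(y) \neq 0$, i.e.\ to the associated section generating the line bundle $\wedge^2 \sN_{X/Y} \otimes \sL^\ast|_Y$ at $y$. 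Letting $y$ range over $Y$ then yields the claimed equivalence. The main obstacle I anticipate is a bookkeeping one: checking that the element $g$ extracted from the Koszul computation really matches, up to sign and with the twist by $\sL^\ast$ correctly accounted for, the image of the extension class under the edge map in~\eqref{eq:gfk} used in Fact~\ref{fact:arrondo1}; once these identifications are made fully explicit, the remainder of the argument is routine local algebra.
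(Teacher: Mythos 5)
Your proposal is correct, but it takes a genuinely different route from the paper. The paper proves Theorem~\ref{thm:arrondo2} by reduction: it cites Arrondo's result \cite[Thm.~1.1]{Arrondo}, which establishes the criterion under the extra hypothesis that $H^1(X,\sL^*)$ and $H^2(X,\sL^*)$ vanish, and then removes that hypothesis by observing that Sequence~\eqref{eq:gfk} is functorial under restriction, so one may pass to an open affine cover on which the required vanishing holds automatically and test local freeness there. Your argument instead reproves the criterion from scratch: you localize near a point $y \in Y$, write down the Koszul resolution of $\sI_Y$ from a regular sequence $(f_1,f_2)$, identify the extension class with a function $g$ modulo $(f_1,f_2)$ via the dualized Koszul complex, realize $\sE|_U$ as the cokernel of a map $\sO_U \to \sO_U^{\oplus 3}$ with entries $(\pm g, -f_2, f_1)$, and read off that the cokernel is locally free of rank two at $y$ exactly when $g(y) \neq 0$. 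Your preliminary observation that $\sE$ is automatically locally free on $X \setminus Y$, and that the section of $\wedge^2 \sN_{X/Y} \otimes \sL^*|_Y$ need only be tested along $Y$, is also correct and cleanly isolates the local question. The sign discrepancy you flag (your kernel generator comes out as $(-g,-f_2,f_1)$ rather than $(g,-f_2,f_1)$) is genuinely immaterial for the unit/non-unit test, so the bookkeeping concern you raise does not cause trouble. The trade-off: the paper's approach is shorter and delegates the local algebra to the literature, while yours is self-contained and makes the local-freeness test completely explicit — it is essentially a direct rederivation of the local content of the Hartshorne--Serre construction rather than an appeal to it.
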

\begin{proof}
  This result is established in \cite[Thm.~1.1]{Arrondo} under the additional
  assumption that the cohomology groups $H¹ \bigl( X,\, \sL^* \bigr)$ and $H²
  \bigl( X,\, \sL^* \bigr)$ vanish.  Note that Sequence~\eqref{eq:gfk} is
  functorial with respect to restriction maps.  By picking an open affine cover
  of $X$, we can always guarantee the necessary cohomology vanishing, and local
  freeness can be checked by restriction to each open affine in the cover.
\end{proof}

\begin{rem}[Characterisation of locally frees, II]
  Let $X$ be any smooth variety of dimension $\dim X ≥ 2$, and $Y ⊆ X$ be a
  local complete intersection of codimension two, with ideal sheaf $\sI_Y ⊂
  \sO_X$.  Given an extension of the form \eqref{eq:slpt}, observe that the
  sheaf $\sE$ is locally free if and only if the section $s$ vanishes precisely
  on $Y$.
\end{rem}

\begin{rem}[Hartshorne-Serre correspondence for bundles on $\bP²$]\label{rem:specialcasep2}
  Consider the case where $X = \bP²$, where $Y ⊂ X$ is any finite, reduced
  subscheme and where $\sL \cong \sO_{\bP²}(d)$ with $d < 3$.  It will then
  follow directly from Serre duality that $H² \bigl( X,\, \sL^* \bigr) = 0$.
  The assumption that $ω_Y$ be isomorphic to $(\sL \otimes ω_X)|_Y$ is vacuous
  in this case.  The bundle $\bigwedge² \sN_{X/Y}\otimes\sL^\ast|_Y$ is the
  trivial line bundle on $Y$.  Since $H¹ \bigl(\bP²,\, \sL^* \bigr) = 0$, each
  nowhere-vanishing section $σ$ in $H^0 \bigl(Y,\, \sO_Y \bigr)$ gives rise to a
  (unique up to isomorphism) rank-two vector bundle on $\bP²$.
\end{rem}

The following corollary applies this result.  It will later be used to construct
deformations of the bundle $\sE$ by moving points in $Y$ within $\bP²$.  The
following notation will be useful.

\begin{notation}
  Using that $\Pic(\bP²) \cong \Pic(\bP²⨯\bA¹)$, identify $\Pic(\bP²⨯\bA¹) \cong
  \bZ$.  Given any integer $n$, write $\sO_{\bP²⨯\bA¹}(n)$ for the corresponding
  line bundle.  In a similar vein, identify $CH²(\bP²⨯\bA¹) \cong \bZ$.  Given
  any rank-two vector bundle $\sE$ on $\bP²⨯\bA¹$, we can thus identify the
  Chern classes $c_i(\sE)$ with integers.
\end{notation}

\begin{cor}[Extension of vector bundles]\label{cor:rhsc}
  Consider the quasi-projective variety $X := \bP²⨯\bA¹$ and the projection onto
  the second factor $π: X → \bA¹$.  Let $Y ⊂ X$ be the union of $m$ pairwise
  disjoint sections of $π$ and $d ≤ 2$ be any integer.  Write $X_0 :=
  \bP²⨯\{0\}$ and $Y_0 := X_0 ∩ Y$.  Assume we are given a rank-two bundle
  $\sE_0$ on $X_0$, defined by an extension,
  \begin{equation}\label{eq:sdf4}
    \xymatrix{ %
      0 \ar[r] & \sO_{X_0} \ar[r]^{s_0} & \sE_0 \ar[r] & \sI_{Y_0} \tensor
      \sO_{X_0}(d) \ar[r] & 0.  }
  \end{equation}
  Then, there exists a rank-two bundle $\sE$ on $X$, defined by an extension
  $$
  \xymatrix{ %
    0 \ar[r] & \sO_X \ar[r]^s & \sE \ar[r] & \sI_Y \otimes \sO_X(d) \ar[r] & 0,
  }
  $$
  such that $\sE|_{X_0} \cong \sE_0$ and $s|_{X_0} = s_0$.
\end{cor}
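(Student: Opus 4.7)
The plan is to apply the Hartshorne–Serre correspondence of Fact~\ref{fact:arrondo1} and Theorem~\ref{thm:arrondo2} directly on the three-fold $X = \bP² ⨯ \bA¹$, taking $\sL = \sO_X(d)$. The idea is that the extension class defining $\sE_0$ corresponds to a nowhere-vanishing section on $Y_0$, which we lift to a nowhere-vanishing section on $Y$; naturality of the Hartshorne–Serre correspondence then produces an extension on $X$ that restricts to the one given on $X_0$.

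First, I would verify the cohomological hypothesis that allows Remark~\ref{ram:arrondo2} to turn Sequence~\eqref{eq:gfk} into an honest isomorphism. For the projection $p : X → \bP²$, the Leray spectral sequence together with the flatness of $k[t]$ over $k$ yields
\[
  H^i\bigl(X,\, \sO_X(-d)\bigr) \cong H^i\bigl(\bP²,\, \sO_{\bP²}(-d)\bigr) \otimes_k k[t],
\]
which vanishes for $i = 1, 2$ whenever $d ≤ 2$. Second, since $Y$ is the disjoint union of $m$ sections of $π$, each isomorphic to $\bA¹$, every vector bundle on $Y$ is trivial; in particular $\wedge² \sN_{X/Y} \otimes \sO_X(-d)|_Y \cong \sO_Y$, and Fact~\ref{fact:arrondo1} gives a canonical isomorphism
\[
  \Ext¹\bigl(\sI_Y \otimes \sO_X(d),\, \sO_X\bigr) \isomto H^0\bigl(Y,\, \sO_Y\bigr) = \prod_{i=1}^m k[t].
\]
The analogous statement on $X_0$ is covered directly by Remark~\ref{rem:specialcasep2}, yielding $\Ext¹\bigl(\sI_{Y_0} \otimes \sO_{X_0}(d),\, \sO_{X_0}\bigr) \cong H^0\bigl(Y_0,\, \sO_{Y_0}\bigr) \cong k^m$.

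Next, I would note that the extension class of \eqref{eq:sdf4} corresponds to a tuple $τ_0 ∈ k^m$ that is nowhere-vanishing by Theorem~\ref{thm:arrondo2}, because $\sE_0$ is locally free. Taking $τ ∈ \prod_{i=1}^m k[t]$ to be the constant function $τ_{0,i}$ on the $i$-th component of $Y$ produces a global section that restricts to $τ_0$ and is nowhere-vanishing on all of $Y$. Theorem~\ref{thm:arrondo2} then supplies a locally free sheaf $\sE$ on $X$ fitting into the required extension. The identification $\sE|_{X_0} \cong \sE_0$ with $s|_{X_0} = s_0$ follows because, by naturality of Sequence~\eqref{eq:gfk} under the transverse intersection $Y_0 = Y ∩ X_0$, the extension class $e$ associated with $τ$ restricts on $X_0$ to the class associated with $τ_0$; any two extensions representing the same class are connected by a unique isomorphism fixing the sub- and quotient-object, which realizes the equality $s|_{X_0} = s_0$.

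The one step requiring care is the verification that Sequence~\eqref{eq:gfk} is functorial along the divisorial inclusion $X_0 ↪ X$, and that the right-hand restriction map in \eqref{eq:gfk} identifies with the evaluation $\prod_{i=1}^m k[t] → k^m$ at $t = 0$. This hinges on the transversality $Y ⋔ X_0$, which implies $\sN_{X_0/Y_0} \cong \sN_{X/Y}|_{Y_0}$. Once this naturality is in place, the remainder of the argument reduces to the elementary observation that any tuple of nonzero scalars lifts to a nowhere-vanishing regular function on a disjoint union of affine lines.
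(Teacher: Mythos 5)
Your argument is correct and follows the same strategy as the paper: set up the Hartshorne--Serre correspondence on $X = \bP² ⨯ \bA¹$, identify extension classes with sections of the (trivial) line bundle $\wedge² \sN_{X/Y} \tensor \sL^*|_Y \cong \sO_Y$, lift the nowhere-vanishing section $τ_0$ on $Y_0$ componentwise to a nowhere-vanishing section on $Y$, and conclude via Theorem~\ref{thm:arrondo2} together with the functoriality of Sequence~\eqref{eq:gfk}. The one variation is your verification that $H^1\bigl(X,\, \sO_X(-d)\bigr)$ and $H^2\bigl(X,\, \sO_X(-d)\bigr)$ vanish: the paper pushes forward along $π \colon X → \bA¹$ and invokes relative duality for the $\bR^2 π_*$ term, while you push forward along the affine projection $X → \bP²$ and reduce directly to the cohomology of $\sO_{\bP²}(-d)$; your variant is a bit cleaner and treats both cohomological degrees uniformly (the paper leaves the $H^1$ case to the reader). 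You also correctly flag the point the paper uses silently, namely that transversality of $Y$ with $X_0$ yields $\sN_{X_0/Y_0} \cong \sN_{X/Y}|_{Y_0}$ and hence the compatibility of the two applications of the Hartshorne--Serre correspondence under restriction to $X_0$.
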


\subsection*{Proof of Corollary~\ref*{cor:rhsc}}

For the reader's convenience, the proof is subdivided into several steps.

\subsubsection*{Step 1: Establishing prerequisites for the Hartshorne-Serre correspondence}

In order to construct the bundle $\sE$, we aim to apply the results of
Fact~\ref{fact:arrondo1} and Theorem~\ref{thm:arrondo2} to $X$, with $\sL =
\sO_X(d)$.  Observe that $Y$ is a local complete intersection, being a smooth,
closed subscheme of $X$.  Since $Y$ is isomorphic to a disjoint union of $m$
copies of $\bA¹$, it follows that all locally free sheaves on $Y$ are free.  The
assumption that $ω_Y$ be isomorphic to $(\sL \otimes ω_X)|_Y$ is therefore
vacuous.

In order to verify vanishing of $H¹ \bigl( X,\, \sL^* \bigr)$ and $H² \bigl(
X,\, \sL^* \bigr)$, consider the Leray spectral sequence associated with $π$,
\cite[Chapt.~II.4.17]{MR0345092}, which takes the form
$$
E²_{ij} = H^i \bigl( \bA¹,\, \bR^j π_* \sL^* \bigr) \Longrightarrow H^{i+j}
\bigl( X,\, \sL^* \bigr).
$$
Since $\bA¹$ is affine, the cohomology groups $H^i \bigl( \bA¹,\, \sF \bigr)$
vanish for any quasi-coherent sheaf $\sF$ and any number $i > 0$.  In
particular, the spectral sequence collapses at the $E²$-page, \cite[Chapt.~I.1,
Ex.~1.B]{MR1793722}, and yields isomorphisms
\begin{equation}\label{eq:A}
  H^0 \bigl( \bA¹,\, \bR^j π_* \sL^* \bigr) \cong
  H^j \bigl(X,\, \sL^* \bigr) \quad \text{for all $j ≥ 0$.}
\end{equation}
Identify $X=\bP²⨯\bA¹$ with the projectivization of the trivial rank-three
bundle on $\bA¹$.  With this identification, it follows from the special case of
relative duality discussed in \cite[Chapt.~III, Exc.~8.4c]{Ha77} that there is a
canonical isomorphism
\begin{equation}\label{eq:B}
  \bR² π_* \sL^* \cong \left( π_* (\sL \otimes ω_X) \right)^* \cong
  \left( π_* \sO_X(d-3) \right)^* = 0 \quad \text{since } d≤2.
\end{equation}
Combining \eqref{eq:A} and \eqref{eq:B}, we see that $H² \bigl( X,\, \sL^*
\bigr) = 0$.  A somewhat simpler argument, left to the reader, shows that
$H¹\bigl( X,\, \sL^* \bigr)$ vanishes as well.

\subsubsection*{Step 2: Application of the Hartshorne-Serre correspondence}

All prerequisites satisfied, Fact~\ref{fact:arrondo1} identifies
$$
\Ext¹\bigl( \sI_Y\otimes \sO_X(d), \, \sO_X \bigr) \cong H^0 \bigl( Y,\, \wedge²
\sN_{X/Y}\otimes\sL^\ast|_Y \bigr).
$$
Likewise, the extension class of \eqref{eq:sdf4} is identified with an element
\begin{align*}
  σ_0 ∈ \Ext¹\bigl( \sI_{Y_0} \otimes \sO_{X_0}(d), \, \sO_{X_0} \bigr)
  & \cong H^0 \bigl( Y_0,\, \wedge² \sN_{X_0/Y_0}\otimes \sL^\ast|_{Y_0} \bigr) \\
  & = H^0 \bigl( Y_0,\, (\wedge² \sN_{X/Y}\otimes\sL^\ast)|_{Y_0} \bigr)
\end{align*}
that, by Theorem~\ref{thm:arrondo2}, generates $\wedge² \sN_{X_0/Y_0}\otimes
\sL^\ast|_{Y_0}$.  To conclude, it will therefore suffice to find a section $σ ∈
H^0 \bigl( Y,\, \wedge² \sN_{X/Y}\otimes\sL^\ast|_Y \bigr)$, which generates
$\wedge² \sN_{X/Y}\otimes\sL^\ast|_Y$ and restricts to $σ_0$.  Since $\wedge²
\sN_{X/Y}\otimes\sL^\ast|_Y$ is isomorphic to the trivial sheaf $\sO_Y$, this is
easily possible.  \qed

\subsection{A Bertini-type theorem}
\label{subsec:bertini}

Generalizing the classical Bertini theorem, Kleiman gave conditions guaranteeing
that the zero locus of a sufficiently general section of a vector bundle is
non-singular.  We state a version of Kleiman's Bertini theorem here; our
formulation is quoted from a paper of Hartshorne \cite[Prop.~1.4]{HartshorneSB}.

\begin{fact}[{Bertini-type theorem for sections in vector bundles, \cite[Cor.~3.6]{Kleiman}}]\label{fact:bertini}
  Let $\sE$ by any rank-two vector bundle on $\bP^n$, for $n ≥ 2$.  If $\sE(-1)$
  is generated by global sections, then for all sufficiently general $s ∈ H^0
  \bigl( \bP^n,\, \sE \bigr)$, the associated scheme of zeros is non-singular.
  \qed
\end{fact}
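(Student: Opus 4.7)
The strategy is a standard incidence-variety argument, powered by a jet-generation computation that uses the hypothesis on $\sE(-1)$ (rather than on $\sE$ itself). Let $V := H^0\bigl(\bP^n,\,\sE\bigr)$, viewed as an affine space, and form the universal vanishing locus
\[
  \sZ := \bigl\{ (x,s) \in \bP^n \times V : s(x) = 0 \bigr\},
\]
equipped with projections $p_1 : \sZ \to \bP^n$ and $p_2 : \sZ \to V$. The fibre of $p_2$ over $s$ is precisely the zero scheme $Z(s)$, and the goal is to show that this fibre is non-singular for $s$ in a dense open of $V$.

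Since $\sE = \sE(-1) \otimes \sO(1)$ and both tensor factors are globally generated, so is $\sE$; hence the evaluation $\mathrm{ev}_x : V \to \sE_x$ is surjective at every $x \in \bP^n$. Consequently the fibre $p_1^{-1}(x)$ is a linear subspace of codimension $2$ in $V$, so $\sZ$ is a rank-$2$ affine bundle over $\bP^n$, smooth and irreducible of dimension $\dim V + n - 2$. A section $s$ has $Z(s)$ smooth of the expected codimension at $x \in Z(s)$ exactly when the linearization $ds_x : T_x \bP^n \to \sE_x$ is surjective, so the bad locus to bound is
\[
  \sZ' := \bigl\{ (x,s) \in \sZ : \rank\, ds_x \leq 1 \bigr\}.
\]

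The key technical input is that the first-jet evaluation $V \onto J^1(\sE)_x$ is surjective for every $x$. The Euler sequence shows that $\Omega^1_{\bP^n}(1)$ is globally generated, and combined with the hypothesis this gives global generation of $\sE \otimes \Omega^1_{\bP^n} = \sE(-1) \otimes \Omega^1_{\bP^n}(1)$. A diagram chase on the jet sequence
\[
  0 \to \sE \otimes \Omega^1_{\bP^n} \to J^1(\sE) \to \sE \to 0
\]
then yields the desired surjectivity; this is the step in which the twist by $\sO(-1)$ is genuinely used.

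A fibrewise dimension count above $\bP^n$ now concludes the argument. The sections with $s(x) = 0$ form a codimension-$2$ subspace $V_x \subseteq V$, jet-surjectivity produces a surjection $V_x \onto \Hom(T_x \bP^n,\,\sE_x)$, and the determinantal locus of rank $\leq 1$ in $\Hom(T_x\bP^n,\sE_x)$ has codimension $(n-1)(2-1) = n - 1$. Summing contributions, $\dim \sZ' \leq n + (\dim V - 2) - (n-1) = \dim V - 1$, so $p_2(\sZ')$ is a proper closed subvariety of $V$ and its complement is the desired dense open parametrising sections with non-singular zero scheme. The main obstacle I foresee is confirming that the determinantal condition really imposes $n-1$ independent constraints on $V_x$ rather than merely on a jet-quotient; jet-surjectivity is precisely what secures this, and it is also what lets the argument run in arbitrary characteristic, without having to invoke generic smoothness of $p_2$.
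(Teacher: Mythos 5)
The paper does not actually prove this statement: it is labelled a ``Fact,'' cited directly to Kleiman's \cite[Cor.~3.6]{Kleiman} (with the formulation borrowed from Hartshorne), and the \qed follows the statement with no argument. So there is no proof in the paper to compare against; what follows is an assessment of your argument on its own terms.

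Your overall strategy --- the incidence variety $\sZ \subset \bP^n \times V$, the bad locus $\sZ'$, the fibrewise codimension count using the rank-$\le 1$ determinantal variety, and the observation that properness of $\bP^n$ makes $p_2(\sZ')$ closed --- is exactly the standard Kleiman-style proof and all of that part is sound. But the key technical lemma, jet-surjectivity, is supported by a false claim. You assert that the Euler sequence shows $\Omega^1_{\bP^n}(1)$ is globally generated. It is not: the twisted Euler sequence $0 \to \Omega^1_{\bP^n}(1) \to \sO_{\bP^n}^{\oplus(n+1)} \to \sO_{\bP^n}(1) \to 0$ exhibits $\Omega^1(1)$ as a \emph{sub}bundle, not a quotient, and in fact $H^0\bigl(\bP^n,\Omega^1(1)\bigr)=0$, so it has no global sections at all. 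Consequently $\sE\otimes\Omega^1 = \sE(-1)\otimes\Omega^1(1)$ is typically \emph{not} globally generated either (already for $\sE=\sO(1)^{\oplus 2}$, where $\sE\otimes\Omega^1 = \Omega^1(1)^{\oplus 2}$ has no sections). Moreover, even if $\sE\otimes\Omega^1$ were globally generated, this would give surjectivity of $H^0(\sE\otimes\Omega^1)\to(\sE\otimes\Omega^1)_x$, which is a different map from the one you need, namely the derivative map $V_x \to (\sE\otimes\Omega^1)_x$ on sections of $\sE$ vanishing at $x$; the diagram chase you sketch does not bridge that gap.

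The lemma itself is true, and the correct route goes through the multiplication map $H^0(\sE(-1)) \otimes H^0(\sO(1)) \to H^0(\sE)$ rather than through global generation of $\sE\otimes\Omega^1$. For $f \in H^0(\sE(-1))$ and a linear form $\ell$ with $\ell(x)=0$, the product $s=f\ell$ lies in $V_x$ and by the Leibniz rule $ds_x = f(x)\otimes d\ell_x$. Since $\sE(-1)$ is globally generated, $f(x)$ realises every vector in $\sE(-1)_x$; and the Euler sequence shows that $\{\ell : \ell(x)=0\} \to (\Omega^1(1))_x$, $\ell \mapsto d\ell_x$, is an isomorphism (this is the statement that $\sO(1)$ is $1$-jet spanned, equivalently very ample). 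Thus $ds_x$ realises every simple tensor in $\sE(-1)_x \otimes (\Omega^1(1))_x = (\sE\otimes\Omega^1)_x$, and by linearity $V_x \to (\sE\otimes\Omega^1)_x$ is surjective. With this substitution the rest of your dimension count goes through unchanged, and --- as you note --- the argument indeed works in arbitrary characteristic because it never invokes generic smoothness.
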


\subsection{Uniqueness of bundle structure}
\label{sec:2B}

The goal of this subsection is to establish Theorem~\ref{thm:uniq}, which shows
that the $\bP¹$-bundle structure on the projectivization of a rank-two bundle on
$\bP²$ is often unique.  In order to state the result, we need to recall the
following definition.  This notion was studied by Strømme, \cite{stroemme}, and
will reappear in later sections.

\begin{defn}[\protect{Type of a bundle on $\bP²$, \cite[Sect.~1.1]{stroemme}}]\label{def:type}
  If $\sE$ is a vector bundle on $\bP²$, set
  $$
  d(\sE) :=
  \begin{cases}
    -1 & \text{if $\sE$ is slope-stable}\\
    \max \bigl\{ d \,|\, H^0 \bigl(\bP²,\, \sE(-d) \bigr) > 0 \bigr\} & \text{otherwise.}
  \end{cases}
  $$
  The number $d(\sE)$ is called the \emph{generic splitting type} of $\sE$, and
  $\sE$ will be said to be ``of \emph{type $d$}''.
\end{defn}

\begin{thm}[Uniqueness of bundle structure]\label{thm:uniq}
  Fix two numbers $c_1 ∈ \{0,-1\}$ and $c_2 ∈ \bZ$ and let $d$ be any number
  such that $d > 3+c_1$.  Let $\sE$ be any rank-two vector bundle on $\bP²$ with
  Chern classes $c_1$ and $c_2$ and type $d$, and let $π : \bP_{\bP²}(\sE) →
  \bP²$ be the obvious bundle map.  Given any other morphism $φ :
  \bP_{\bP²}(\sE) → \bP²$ that has the structure of a Zariski locally trivial
  $\bP¹$-bundle, there exists an automorphism $ψ : \bP² → \bP²$ fitting into a
  commutative diagram of the form:
  $$
  \xymatrix{ %
    \bP_{\bP²}(\sE) \ar@{=}[r] \ar[d]_{φ} & \bP_{\bP²}(\sE) \ar[d]^{π}\\
    \bP² \ar[r]_{ψ} & \bP².
  }
  $$
\end{thm}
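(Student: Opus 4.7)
The plan is to set $L := φ^* \sO_{\bP²}(1) \in \Pic \bigl( \bP_{\bP²}(\sE) \bigr)$ and use the decomposition $\Pic \bigl( \bP_{\bP²}(\sE) \bigr) = \bZ \cdot H \oplus \bZ \cdot \xi$ with $H := π^*\sO_{\bP²}(1)$ and $\xi := c_1(\sO_π(1))$, writing $L = aH + b\xi$. Restricting $L$ to any fibre of $π$ yields $\sO_{\bP¹}(b)$, and global generation of $L$ forces $b \geq 0$. If $b = 0$, then $L = π^*\sO(a)$, so $φ$ factors as $φ = ψ \circ π$ for some morphism $ψ : \bP² → \bP²$. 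Since the fibres of the Zariski $\bP¹$-bundle $φ$ are reduced $\bP¹$'s and the fibres of $π$ are already reduced $\bP¹$'s, the fibres of $ψ$ must be single reduced points; hence $ψ$ is an automorphism of $\bP²$, which yields the desired commutative diagram.

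The crux is to rule out $b \geq 1$, for which I would use two numerical identities. Since $h³ = 0$ on $\bP²$, one has $L³ = 0$; expansion using $H³ = 0$ and the Grothendieck relation $\xi² = c_1 H\xi - c_2 H²$, together with division by $b$, produces
\begin{equation*}
  3a² + 3abc_1 + b²(c_1² - c_2) = 0. \tag{I}
\end{equation*}
Next, any fibre $G$ of $φ$ is a $\bP¹$ with trivial normal bundle in $X$, so adjunction gives $K_X \cdot [G] = -2$. Using $[G] = L²$ (flat pullback of a point class) and the standard formula $K_X = (c_1 - 3)H - 2\xi$, a direct computation yields
\begin{equation*}
  b(2a + bc_1)(c_1 + 3) + 2a² - 2b²c_2 = 2. \tag{II}
\end{equation*}
Eliminating $c_2$ between (I) and (II) produces the equation $-4a² + 2ab(3 - 2c_1) + b²c_1(3 - c_1) = 2$. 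For $c_1 = 0$ this reduces to $a(3b - 2a) = 1$, and for $c_1 = -1$ to $(2a - b)(a - 2b) = -1$. In both cases the only integer solution with $b \geq 1$ is $(a, b) = (1, 1)$.

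Finally, the type hypothesis excludes $(a, b) = (1, 1)$. By definition of type there exists a nonzero section $s \in H^0 \bigl( \bP²,\, \sE(-d) \bigr)$; composing $π^*\sO(d) \into π^*\sE \onto \sO_π(1)$ yields a nonzero global section of $\sO_π(1) \otimes π^*\sO(-d)$ on $X$, whose vanishing locus is an effective divisor $V$ of class $\xi - dH$. Because $L$ and $H$ are globally generated on $X$—hence nef—one has $L \cdot V \cdot H \geq 0$. A direct Chow-ring calculation gives
$$
  L \cdot V \cdot H \;=\; (aH + b\xi)(\xi - dH)H \;=\; a + b(c_1 - d),
$$
so $a \geq b(d - c_1)$. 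Substituting $(a, b) = (1, 1)$ yields $1 \geq d - c_1$, directly contradicting the hypothesis $d > 3 + c_1$. Hence $b = 0$, and the first case supplies the desired automorphism $ψ$.

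The main technical point, I expect, will be the adjunction calculation $K_X \cdot L² = -2$, which relies on the triviality of the normal bundle of a smooth fibre of a $\bP¹$-bundle. Beyond that, the proof uses only standard intersection theory in the Chow ring of a projective bundle, the familiar nonnegativity of triple intersections of nef classes with an effective divisor, and the elementary diophantine analysis of the resulting equation.
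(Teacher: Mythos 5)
Your proof is correct, but it takes a genuinely different route from the paper's.  The paper argues via the cone of curves: it shows that if the numerical classes of the $π$- and $φ$-fibers were not proportional, then both would span extremal rays of $\overline{NE_1(X)_{\bR}}$ on which $-K_X$ is positive (adjunction gives $-K_X \cdot F_{π} = -K_X \cdot F_{φ} = 2$), hence $X$ would be Fano by Kleiman's ampleness criterion; the large splitting type is then used to produce the negative section $C$ of the Hirzebruch surface $π^{-1}(\ell) \cong \bF_b$ over a general line $\ell \subset \bP²$, for which $-K_X \cdot C = -b-1 < 0$, a contradiction.  Your approach instead expresses $L = φ^*\sO_{\bP²}(1)$ in the basis $\{H,\xi\}$ of $\Pic(X)$, derives the two numerical identities $L^3 = 0$ and $K_X \cdot L^2 = -2$, eliminates $c_2$ to obtain a quadratic diophantine equation in $(a,b)$, and shows that the only integer solution with $b \geq 1$ is $(a,b) = (1,1)$; this last case is then ruled out by pairing the nef classes $L$ and $H$ against the effective divisor $V$ of class $\xi - dH$ coming from a destabilizing section of $\sE(-d)$.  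Both proofs exploit the type hypothesis in essentially the same geometric way (a high splitting type forces a negative effective class against a putative second contraction), but the frameworks differ: the paper invokes the theorem of the base and Kleiman's criterion, whereas yours stays entirely inside elementary Chow-ring intersection theory plus a short diophantine case analysis, which is arguably more self-contained.  One point worth spelling out a bit more in the $b=0$ case: the conclusion that $ψ$ is an automorphism (and not merely a bijective finite morphism, as Frobenius would be in positive characteristic) rests on the fact that the scheme-theoretic $φ$-fibers are reduced, so $ψ$ is finite, unramified and bijective between regular schemes, hence \'etale and therefore an isomorphism — your remark about reduced fibers is exactly the relevant observation, but the chain ``unramified $+$ finite $+$ bijective onto a regular target $\Rightarrow$ isomorphism'' deserves to be stated.
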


\subsection*{Proof of Theorem~\ref*{thm:uniq}}

We prove Theorem~\ref{thm:uniq} in the remaining part of Section~\ref{sec:2B}.
For the reader's convenience, the proof is subdivided into a number of
relatively independent steps.

\begin{figure}
  \centering
  \footnotesize

  \begin{tikzpicture}[scale=0.5]
    \fill[fill=gray!20!white] (0,0) -- (10,5) -- (10,10) -- (5,10);

    \draw (6.8,8.5) node[below]{$\overline{NE_1(X)_{\bR}}$};
    \draw (6.8,7.6) node[below]{cone of effective cycles};
    \draw (6.8,7.0) node[below]{(does not contain a line)};

    \draw [->] (-1,0) -- (11,0) ;
    \draw [->] (0,-1) -- (0,11) ;
    \draw (-2.7,10.9) node[below]{$N_1(X)_{\bR}$};
    \draw (-2.7,10.2) node[below]{numerical classes};
    \draw (-2.7, 9.5) node[below]{of 1-cycles};

    \draw[dashed] (0,0) -- (10,5);
    \fill (2,4) circle (.1) node[left]{$[F_{ψ}]$};
    \draw [->] (2.1,9) node[above]{\quad extremal ray $α_1$} to (3.5,7.5);

    \draw[dashed] (0,0) -- (5,10);
    \fill (5,2.5) circle (.1) node[below]{\quad$[F_{π}]$};
    \draw [->] (9,1.5) node[below]{extremal ray $α_2$} to (7,3.4);
  \end{tikzpicture}

  \bigskip

  {\small The figure illustrates the vector space $N_1(X)_{\bR}$ of numerical
    curve classes that appears in the proof of Theorem~\ref{thm:uniq}.  The
    closed cone $\overline{NE_1(X)_{\bR}}$, which is spanned by effective
    cycles, does not contain a line and therefore has exactly two extremal rays,
    $α_1$ and $α_2$.  Under the assumptions made in the proof, it will turn out
    that these rays are generated by numerical classes of fibers of the bundles
    $π$ and $ψ$, respectively.}

  \caption{Proof of Theorem~\ref{thm:uniq}}
  \label{fig:main}
\end{figure}

\subsubsection*{Step 1.  Setup}

Since $\bP²$ is normal, the claim of Theorem~\ref{thm:uniq} follows from
Zariski's main theorem as soon as we show that any $φ$-fiber $F$ is also a fiber
of $π$.  Since fibers of $π$ are characterized as those curves that intersect
$c_1 \bigl( π^* \sO_{\bP²}(1) \bigr)$ trivially, it suffices to show that the
numerical classes of $π$-fibers and $φ$-fibers agree up to multiplication with a
positive constant.  We argue by contradiction and assume that this is not the
case.  Using standard arguments of minimal model theory, we will see in Step~2
that this assumption implies that $X := \bP_{\bP²}(\sE)$ is Fano, that is, that
the anti-canonical divisor $-K_X$ is ample.  Step~3 then shows that the
numerical assumptions made in Theorem~\ref{thm:uniq} are incompatible with the
Fano property.

\subsubsection*{Step 2.}

The Picard-number of $X$ being two, it follows from the Theorem of the Base of
Néron-Severi, \cite[II Thm.~4.5 and references there]{K96}, that the vector
space of numerical curve classes, $N_1(X)_{\bR}$, is likewise two-dimensional.
Given any ample divisor $D$ on $X$, recall from Kleiman's ampleness criterion,
\cite[IV Thm.~2.19]{K96}, that any numerical class $α$ contained in the closure
of the cone of effective cycles, $\overline{NE_1(X)_{\bR}}$, intersects $D$
positively, $D.α > 0$.  In particular, the cone $\overline{NE_1(X)_{\bR}}$ does
not contain any lines.  As it is convex by definition,
$\overline{NE_1(X)_{\bR}}$ is spanned by two extremal classes, say $α_1$ and
$α_2$.

Intersection with $c_1(π^* \sO_{\bP²}(1))$ defines a non-trivial form on
$N_1(X)_{\bR}$, which is non-negative on $\overline{NE_1(X)_{\bR}}$ and trivial
on the ray $\bR^+ \cdot [F_{π}]$ spanned by the numerical class of any $π$-fiber
$F_{π}$.  It follows that this ray must be one of the two extremal rays of
$\overline{NE_1(X)_{\bR}}$.  The same holds for the numerical class of any
$φ$-fiber $F_{φ}$.  Using the assumption that the numerical classes $[F_{π}]$
and $[F_{φ}]$ are no positive multiples of each other, we have thus identified
$\overline{NE_1(X)_{\bR}}$ as the cone spanned by these two classes,
$$
\overline{NE_1(X)_{\bR}} = \bR^{≥ 0} \cdot [F_{π}] + \bR^{≥ 0} \cdot [F_{φ}].
$$
This observation has further consequences.  Using the $\bP¹$-bundle structure of
$π$ and $φ$, it follows from the adjunction formula that
$$
-K_X.F_{π} = -K_X.F_{φ} = 2.
$$
It follows that $-K_X \cdot C > 0$ for any class $C ∈ \overline{NE_1(X)_{\bR}}
\setminus \{ 0\}$ and thus by Kleiman's ampleness criterion, we conclude that
$-K_X$ is ample.  In other words, $X$ is Fano.

\subsubsection*{Step 3.}

In order to derive a contradiction, we will now construct a curve $C ⊂ X$ which
intersects $-K_X$ negatively.  To this end, we choose a general line $\ell ⊂
\bP²$.  A classical result of Dedekind and Weber \cite{DW82}, often attributed
to Grothendieck, allows us to write $\sE|_{\ell}$ as a sum of line bundles,
$$
\sE|_{\ell} \cong
\begin{cases}
  \sO_{\bP¹}(a) \oplus \sO_{\bP¹}(-a) & \text{if $c_1(\sE) = 0$} \\
  \sO_{\bP¹}(a) \oplus \sO_{\bP¹}(-a-1) & \text{if $c_1(\sE) = -1$,}
\end{cases}
$$
where $a$ is a non-negative integer.  Since $\ell$ is general, it follows
immediately from the definition of generic splitting type that $d ≤ a$.  In
particular, $3 + c_1 < a$.  In either case, $a > 2$.  We obtain that the
preimage of $\ell$ is a Hirzebruch surface of type
$$
π^{-1}(\ell) \cong \bF_b \quad\text{where } b > 4.
$$
Let $C ⊂ \bF_b$, $C \cong \bP¹$ denote the unique section whose
self-intersection equals $-b$.  A two-fold application of the adjunction formula
then shows the following
$$
-K_X.C = \underbrace{c_1( N_{\bF_b/X} ).C}_{= 1} + (-K_{\bF_b}.C) = 1 +
\underbrace{c_1( N_{C/\bF_b} ).C}_{= -b} + \underbrace{\deg T_C}_{=-2} = -b-1
< 0.
$$
This contradicts the result obtained in Step~2 and therefore ends the proof of
Theorem~\ref{thm:uniq}.  \qed

\subsection{Deformation rigidity}
\label{sec:2D}

Assume we are given a proper, surjective morphism of varieties, $X → \bA¹$, and
assume that the fiber $X_0$ over the origin is of the form $X_0 \cong
\bP_{\bP²}(\sE_0)$, for a suitable rank-two vector bundle $\sE_0$ on $\bP²$.
Under favorable conditions, the following Theorem~\ref{thm:rigidity} guarantees
that nearby fibers are also of this form, $X_t \cong \bP_{\bP²}(\sE_t)$, and
that the bundles $\sE_t$ vary smoothly over $\bA¹$.

\begin{thm}[Deformation rigidity of $\bP¹$-bundles over $\bP²$]\label{thm:rigidity}
  Let $f : X → \bA¹$ be a proper, surjective morphism of abstract varieties
  defined over $k$.  Write $X_0 := f^{-1}(0)$ for the scheme theoretic fiber
  over $0$ of $f$.  Assume that there exists a locally free sheaf $\sE_0$ of
  rank two on $\bP²$ and an isomorphism $φ_0 : X_0 → \bP_{\bP²}(\sE_0)$.  Write
  $X_{\reg}$ for the regular locus of $X$ and assume further that the natural
  restriction map $\Pic(X_{\reg}) → \Pic(X_0)$ is surjective.  Then, there
  exists a Zariski-open neighborhood $U = U(0) ⊆ \bA¹$ such that all fibers
  $(X_t)_{t ∈ U}$ are of the form $X_t \cong \bP_{\bP²}(\sE_t)$.

  More precisely, there exist a rank-three, locally free sheaf $\sF_U$ on $U$, a
  rank-two locally free sheaf $\sE_U$ on $Y_U := \bP_U(\sF_U)$ and a commutative
  diagram of the form
  $$
  \xymatrix{ %
    X_0 \ar[dd]_{f|_{X_0}} \ar[r]^(.45){φ_0}_(.45){\cong} & \bP(\sE_0) \ar[rrr]^{\text{closed immersion}} \ar[d]^{\text{$\bP¹$-bundle}} &&& \bP_{Y_U}(\sE_U) \ar[d]_{\text{$\bP¹$-bundle}}^{α_U} & \ar[l]^(.4){\cong}_(.4){φ_U} X_U \ar[rrr]^{\text{open immersion}} \ar[dd]^{f|_{X_U}} &&& X \ar[dd]^f \\
    & \bP² \ar[rrr]_{\text{closed immersion}} \ar[d] &&& Y_U \ar[d]_{\text{$\bP²$-bundle}}^{β_U} \\
    \{ 0 \} \ar@{<->}[r]_{=} & \{ 0 \} \ar[rrr]_{\text{closed immersion}} &&& U \ar@{<->}[r]_{=} & U \ar[rrr]_{\text{open immersion}} &&& \bA¹, }
  $$
  where $X_U := f^{-1}(U)$.
\end{thm}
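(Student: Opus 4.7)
The strategy is to use the Picard-surjectivity hypothesis to lift two natural line bundles from $X_0$ to a neighbourhood of $X_0$ in $X$, and to build the tower $X_U → Y_U → U$ by taking relative linear systems. First I would check that, after shrinking $U$, the total space $X_U$ is smooth and $f|_{X_U}$ is smooth.  Since $X$ is integral and $\bA¹$ a smooth curve, the surjection $f$ is automatically flat; the central fibre $X_0 \cong \bP_{\bP²}(\sE_0)$ being smooth then forces $X$ to be smooth at every point of $X_0$, and properness of $f$ lets us shrink $U$ so that $X_U ⊂ X_{\reg}$ and $f|_{X_U}$ is smooth.  Restriction then gives a map $\Pic(X_{\reg}) → \Pic(X_U)$.

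Next I would use the assumed surjectivity of $\Pic(X_{\reg}) → \Pic(X_0) \cong \bZ\{π_0^*\sO_{\bP²}(1)\} ⊕ \bZ\{\sO_{\bP_{\bP²}(\sE_0)}(1)\}$ to lift these two generators to line bundles $\sM$ and $\sN$ on $X_{\reg}$, and restrict both to $X_U$.  On the central fibre the bundle $\sM|_{X_0}$ is globally generated with $H^0 = k³$ and vanishing higher cohomology.  Applying cohomology-and-base-change to the proper flat family $f|_{X_U}$, after a further shrinking of $U$ the sheaf $\sF_U := f_*\sM$ becomes locally free of rank $3$ on $U$, commutes with base change, $R^i f_*\sM = 0$ for $i>0$, and $\sM$ remains relatively globally generated on every fibre.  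Setting $Y_U := \bP_U(\sF_U)$, the relative linear system defines a $U$-morphism $α_U \colon X_U → Y_U$ whose restriction to $X_0$ agrees, under base change, with $π_0 \circ φ_0$.

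The core of the argument is then to show that $α_U$ is a Zariski-locally-trivial $\bP¹$-bundle and that $X_U \cong \bP_{Y_U}(\sE_U)$ for a suitable rank-two bundle $\sE_U$ on $Y_U$.  Because $α_0 = π_0$ is a smooth proper $\bP¹$-bundle and both $X_U$ and $Y_U$ are smooth of the expected dimensions, semi-continuity of fibre dimension together with flatness of $f|_{X_U}$ forces $α_U$ to be flat with geometrically integral one-dimensional fibres on an open neighbourhood of $X_0$; computing $α_{U*}\sO_{X_U}$ and $R¹α_{U*}\sO_{X_U}$ via base change then identifies every fibre as a smooth $\bP¹$.  The line bundle $\sN$ has relative degree one on the $α_0$-fibres, hence by constancy of Euler characteristic along the flat morphism $α_U$, on every $α_U$-fibre.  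Thus $\sE_U := α_{U*}\sN$ is locally free of rank $2$ on $Y_U$, and the evaluation surjection $α_U^*\sE_U \onto \sN$ induces a $Y_U$-morphism $φ_U \colon X_U → \bP_{Y_U}(\sE_U)$ which is fibrewise a degree-one map $\bP¹ → \bP¹$, hence an isomorphism.  The commutativity of the diagram and the agreement with $φ_0$ on the central fibre follow from the base-change identifications.

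The main obstacle is the step asserting that $α_U$ is genuinely a smooth $\bP¹$-fibration over the entire threefold $Y_U$, rather than merely a dominant rational map that could develop reducible or non-reduced fibres over some divisor on nearby $X_t$.  Pinning this down requires carefully propagating global generatedness of $\sM$ through cohomology-and-base-change and ruling out any jumping of fibre invariants; the assumption that $\sM$ extends (not just locally near $X_0$) is what makes this possible.  The second line bundle $\sN$ plays the essential role of trivialising the Brauer class of the resulting $\bP¹$-fibration, and so the Picard-surjectivity hypothesis is doing two jobs at once: it provides the $\bP²$-target $Y_U$ and it upgrades the Brauer–Severi fibration $α_U$ to a projective bundle.
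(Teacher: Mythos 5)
Your proposal follows essentially the same line of argument as the paper: exploit flatness to get smoothness of $f$ near $X_0$, lift the two generators $\sO_{\bP(\sE_0)}(1)$ and $η_0^*\sO_{\bP²}(1)$ of $\Pic(X_0)$ to line bundles on $X_U$ via the Picard-surjectivity hypothesis, use cohomology-and-base-change to obtain a rank-three bundle $\sF_U = f_*\sM$ defining $Y_U = \bP_U(\sF_U)$ and a factorization $X_U \to Y_U \to U$, and then push forward the second line bundle to get the rank-two bundle $\sE_U$ that presents $X_U$ as its projectivization. Your closing remark that the second line bundle kills the Brauer class of the $\bP¹$-fibration is exactly the right way to understand why both lifts are required.

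The one place where you are too quick is in establishing that the intermediate map $α_U\colon X_U \to Y_U$ has every fibre a smooth $\bP¹$. Flatness, constant one-dimensional fibre dimension, and constancy of $\chi(\sO_F) = 1$ do not by themselves rule out singular or reducible genus-$0$ fibres (a nodal chain of rational curves has the same Hilbert polynomial). What is needed, and what the paper supplies, is a direct proof that $α_U$ is \emph{smooth} near $X_0$: one uses the fibrewise smoothness criterion of \cite[II Cor.~2.2]{SGA1}, according to which $α_U$ is smooth at $x \in X_0$ because $f_U = β_U \circ α_U$ is smooth at $x$ and the restriction $α_U|_{X_0} \colon X_0 \to Y_0$ is a $\bP¹$-bundle and hence smooth. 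After shrinking $U$ one then has a smooth proper morphism with connected fibres, and \emph{only then} does the Euler characteristic argument identify every fibre as $\bP¹$. With that correction, your outline matches the paper's proof step for step, including the final verification that $φ_U$ is an isomorphism (the paper runs this through a bijective-separable-birational argument and Zariski's Main Theorem, which is the careful version of your ``fibrewise degree-one, hence isomorphism'' claim).
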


\begin{rem}[Smoothness of $X$ near $X_0$]\label{rem:2}
  Since $\bA¹$ is one-dimensional and smooth, it follows that the morphism $f$
  of Theorem~\ref{thm:rigidity} is flat, \cite[III Prop.~9.7]{Ha77}.  The
  assumption that $X_0 \cong \bP_{\bP²}(\sE_0)$ therefore implies the existence
  of an open neighborhood $V = V(0) ⊆ \bA¹$ such that $X_V := f^{-1}(V)$ and
  $f|_{X_V}$ are smooth, \cite[III Ex.~10.2]{Ha77}.  The restriction map
  $\Pic(X_{\reg}) → \Pic(X_0)$ used in Theorem~\ref{thm:rigidity} is therefore
  well-defined.
\end{rem}

\subsection*{Proof of Theorem~\ref*{thm:rigidity}}

As before, the proof of Theorem~\ref{thm:rigidity} spans the rest of the present
Section~\ref{sec:2D}.

\subsubsection*{Step 1.  Choices and identifications}

Choose a rank-two locally free sheaf $\sE_0$ on $\bP²$ and one identification
$φ_0 : X_0 → \bP_{\bP²}(\sE_0)$.  With these choices made, consider the natural
projection morphism $η_0 : X_0 → \bP²$ and the invertible sheaves $\sA_0 :=
\sO_{\bP_{\bP²}(\sE_0)}(1)$ and $\sB_0 := η_0^*\bigl( \sO_{\bP²}(1) \bigr)$.
Using the assumption that the natural restriction map $\Pic(X_{\reg}) →
\Pic(X_0)$ is surjective, choose invertible sheaves $\sA$, $\sB$ on $X_{\reg}$
whose restrictions to $X_0$ agree with $\sA_0$ and $\sB_0$, respectively.
Finally, choose an open neighborhood $U = U(0) ⊆ \bA¹$ of the point $0 ∈ \bA¹$
such that $f$ is smooth over $U$.

With the exception of $U$, maintain the choices made in this section throughout
the proof.  For simplicity, we will abuse notation slightly and shrink the
neighborhood $U$ several times in the proof, whenever it becomes clear that
there exists a sub-neighborhood $U' ⊆ U$ where some desirable property holds.

\subsubsection*{Step 2.  Notation}

If $V ⊆ \bA¹$ is any open set, denote the $f$-preimage of $V$ by $X_V :=
f^{-1}(V) ⊆ X$.  If $X_V$ is smooth, denote the restriction of $\sA$ by $\sA_V
:= \sA|_{X_V}$, similarly for $\sB$.  The restriction of $f$ to $V$ is written
as $f_V : X_V → V$.  In a similar vein, if $t ∈ \bA¹$ is any closed point, write
$X_t := f^{-1}(t)$ and $\sA_t := \sA|_{X_t}$, etc.

To avoid awkward notation, write $Y_0 := \bP²$ when thinking of $\bP²$ as the
base of the $\bP¹$-bundle $η_0$.  Fibers of $η_0$ will always be denoted by
$\ell$.

\subsubsection*{Step 3.  Observations}

Semicontinuity of the flat, proper morphism $f$, \cite[Cor.~on
p.~50]{MR2514037}, guarantees that there exists an open neighborhood $V = V(0) ⊆
U$ such that $(f_V)_*(\sO_{X_V}) = \sO_V$.  In particular, fibers of $f_V$ will
be connected.  Shrinking $U$, if necessary, we assume that this holds true on
all of $U$.

\begin{asswlog}\label{ass:conn}
  All fibers of the morphism $f_U : X_U → U$ are connected and
  $(f_U)_*(\sO_{X_U}) = \sO_U$.
\end{asswlog}

\subsubsection*{Step 4.  Construction of $Y_U$}

We will show in this step that the push-forward of the sheaf $\sB_U$ is locally
free.  The space $Y_U$ will be constructed as the projectivization of this
sheaf.

\begin{claim}\label{claim:n1}
  The cohomology groups $H^i \bigl( X_0,\, \sB_0 \bigr)$ vanish, for all $i ∈
  \bN^+$.
\end{claim}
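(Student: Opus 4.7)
The plan is to reduce the cohomology of $\sB_0$ on $X_0$ to the cohomology of $\sO_{\bP²}(1)$ on the base $\bP²$, using that $η_0 : X_0 \cong \bP_{\bP²}(\sE_0) → \bP² = Y_0$ is a Zariski-locally-trivial $\bP¹$-bundle.

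First, I would compute the higher direct images $R^j (η_0)_* \sB_0$. Since $\sB_0 = η_0^* \sO_{\bP²}(1)$, the projection formula gives
$$
R^j (η_0)_* \sB_0 \cong \sO_{\bP²}(1) \otimes R^j (η_0)_* \sO_{X_0}.
$$
Because $η_0$ is a $\bP¹$-bundle, its fibers $\ell \cong \bP¹$ satisfy $H^0(\ell, \sO_\ell) = k$ and $H^j(\ell, \sO_\ell) = 0$ for $j > 0$; by cohomology and base change (or directly from local triviality) this yields $(η_0)_* \sO_{X_0} = \sO_{\bP²}$ and $R^j (η_0)_* \sO_{X_0} = 0$ for $j > 0$. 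Hence
$$
(η_0)_* \sB_0 \cong \sO_{\bP²}(1), \qquad R^j (η_0)_* \sB_0 = 0 \text{ for } j > 0.
$$

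Second, I would feed this into the Leray spectral sequence
$$
E_2^{p,q} = H^p \bigl( \bP²,\, R^q (η_0)_* \sB_0 \bigr) \Longrightarrow H^{p+q} \bigl( X_0,\, \sB_0 \bigr).
$$
Since only the row $q = 0$ is nonzero, the spectral sequence degenerates and gives canonical isomorphisms $H^i(X_0, \sB_0) \cong H^i(\bP², \sO_{\bP²}(1))$ for all $i ≥ 0$. The claim then follows from the standard computation $H^i(\bP², \sO_{\bP²}(1)) = 0$ for $i > 0$, see \cite[Chapt.~III, Thm.~5.1]{Ha77}.

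There is no serious obstacle here; the only thing to check carefully is that $R^j(η_0)_* \sO_{X_0}$ behaves as expected, and for a Zariski-locally-trivial $\bP¹$-bundle this is immediate from the Künneth-type local computation on trivializing opens.
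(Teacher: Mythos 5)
Your proof is correct and follows essentially the same route as the paper's: compute the higher direct images $R^j(\eta_0)_*\sB_0$, observe they vanish for $j>0$ because the fibers are $\bP^1$ and $\sB_0$ is trivial on them, then apply Leray to reduce to the cohomology of $\sO_{\bP^2}(1)$. The only cosmetic difference is that you invoke the projection formula to split off $R^j(\eta_0)_*\sO_{X_0}$, whereas the paper computes $R^j(\eta_0)_*\sB_0$ directly via cohomology-and-base-change on the fibers.
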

\begin{proof}[Proof of Claim~\ref{claim:n1}]
  Let $\ell ⊂ X_0$ be any fiber of $η_0$.  Then $\ell \cong \bP¹$, the sheaf
  $\sB_0|_{\ell}$ is isomorphic to $\sO_{\bP¹}$, and $h^i\bigl( \ell,\,
  \sB_0|_{\ell}\bigr) = 0$ for all $i ∈ \bN^+$.  In particular, $\bR^i (η_0)_*
  \sB_0 = 0$ for all $i ∈ \bN^+$, \cite[Cor.~2 on p.~50]{MR2514037}.  Given any
  specific number $i ∈ \bN^+$, the cohomology group in question is thus computed
  as follows,
  \begin{align*}
    H^i \bigl( X_0,\, \sB_0 \bigr) & = H^i \bigl( Y_0,\, (η_0)_* \sB_0 \bigr) && \text{Leray spectral sequence, \cite[III Ex.~8.1]{Ha77}}\\
    & = H^i \bigl( \bP²,\, \sO_{\bP²}(1) \bigr) && \text{Definition of $\sB_0$}\\
    & = 0.  && \text{Cohomology of $\bP^n$, \cite[III Thm.~5.1]{Ha77}.}
  \end{align*}
  This finishes the proof of Claim~\ref{claim:n1}.
\end{proof}

\begin{claim}\label{claim:n2}
  There exists an open, affine neighborhood $V = V(0) ⊆ U$ with the following
  properties:
  \begin{enumerate}
  \item\label{il:1} The sheaf $(f_V)_* \sB_V$ is locally free of rank three.
  \item\label{il:2} Given any closed point $t ∈ V$, let $k(t)$ denote the
    associated residue field.  With this notation, the natural maps $(f_* \sB)
    \otimes_{\sO_V} k(t) → H^0 \bigl( X_t,\, \sB_t \bigr)$ are isomorphisms, for
    all closed points $t ∈ V$.
  \item\label{il:3} The natural restriction map $r_t : H^0 \bigl( X_V,\, \sB_V
    \bigr) → H^0 \bigl( X_t,\, \sB_t \bigr)$ is surjective, for all closed
    points $t ∈ V$.
  \end{enumerate}
\end{claim}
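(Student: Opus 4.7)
The plan is to derive all three items from Claim~\ref{claim:n1} by a routine application of cohomology-and-base-change, combined with the fact that an affine shrinking of $U$ suffices for (iii).

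First, since $f$ is proper (being a base change of the proper morphism used to define the setup) and smooth over $U$ (in particular flat), the sheaf $\sB_U$ is $f_U$-flat. Claim~\ref{claim:n1} together with semicontinuity, \cite[Cor.~on p.~50]{MR2514037}, gives an open neighborhood $V_1 = V_1(0) \subseteq U$ on which $\bR^i(f_{V_1})_* \sB_{V_1} = 0$ for every $i \in \bN^+$. By the theorem on cohomology and base change, \cite[III Thm.~12.11]{Ha77}, the natural map
$$
(f_*\sB) \otimes_{\sO_V} k(t) \longrightarrow H^0 \bigl( X_t,\, \sB_t \bigr)
$$
is then surjective for every closed point $t \in V_1$; using once more the vanishing of the higher direct images together with the same base change statement applied in degree zero, one further degrades surjectivity to bijectivity, establishing \ref{il:2} on $V_1$.

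Second, once the fiberwise dimension $t \mapsto \dim_{k(t)} H^0(X_t, \sB_t)$ is known to agree with the rank of the stalks of $(f_{V_1})_*\sB_{V_1}$ via \ref{il:2}, its constancy on a possibly smaller neighborhood $V_2 \subseteq V_1$ follows from upper semicontinuity and from the computation in Claim~\ref{claim:n1}, which shows $h^0(X_0, \sB_0) = h^0(\bP^2, \sO_{\bP^2}(1)) = 3$. The standard consequence, \cite[III Cor.~12.9]{Ha77}, is that $(f_{V_2})_*\sB_{V_2}$ is locally free on $V_2$, and its rank equals three; this gives \ref{il:1}.

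Finally, shrink $V_2$ to an open affine neighborhood $V = V(0) \subseteq V_2$ of the origin. On an affine base, global sections of the quasi-coherent sheaf $(f_V)_*\sB_V$ surject onto any fiber $(f_V)_*\sB_V \otimes_{\sO_V} k(t)$; combining this with part \ref{il:2} gives
$$
H^0 \bigl( X_V,\, \sB_V \bigr) \;=\; H^0 \bigl( V,\, (f_V)_*\sB_V \bigr) \;\twoheadrightarrow\; (f_V)_*\sB_V \otimes_{\sO_V} k(t) \;\cong\; H^0 \bigl( X_t,\, \sB_t \bigr),
$$
which is exactly the surjectivity asserted in \ref{il:3}. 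The only point requiring care is the shrinking sequence $U \supseteq V_1 \supseteq V_2 \supseteq V$, but since each shrinking is controlled by an explicit semicontinuity or affine-cover argument and since all conclusions are local near $0$, no genuine obstacle arises; the whole claim is an instance of the base-change package once Claim~\ref{claim:n1} is in hand.
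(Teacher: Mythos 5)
Your proof is correct and follows essentially the same route as the paper: vanishing of higher fiber cohomology from Claim~\ref{claim:n1}, then the standard cohomology-and-base-change package to get local freeness and the base-change isomorphism, then affineness of the base for the surjectivity in \ref{il:3}. The only difference is cosmetic (you cite Hartshorne III.12.9/12.11 where the paper cites the corresponding results in Mumford's \emph{Abelian Varieties}), and the paper additionally invokes constancy of the Euler characteristic $\chi = \phi_0$ rather than quoting Grauert directly, but these are interchangeable ways to state the same machinery.
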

\begin{proof}[Proof of Claim~\ref{claim:n2}]
  Recall the following standard continuity and semicontinuity properties of the
  flat, proper morphism $f$, \cite[Cor.~on p.~50]{MR2514037}.
  \begin{enumerate}
  \item\label{il:a1} The functions $φ_i : U → \bN$, $t \mapsto h^i\bigl( X_t,\,
    \sB_t \bigr)$ are upper semicontinuous for all $i ∈ \bN$.
  \item\label{il:a2} The function $χ : U → \bN$, $t \mapsto \sum_{i ∈ \bN}
    (-1)^i φ_i(t)$ is constant.
  \end{enumerate}
  Claim~\ref{claim:n1} and Item~\ref{il:a1} imply the existence of an open,
  affine neighborhood $V = V(0) ⊆ U$ such that $φ_i(t) = 0$ for all closed
  points $t ∈ V$ and all indices $i ∈ \bN^+$.  Together with Item~\ref{il:a2},
  we see that $χ = φ_0$ is constant on $V$.  By \cite[Cor.~2 on
  p.~50]{MR2514037}, this already implies that $f_* \sB|_V$ is locally free and
  that \ref{il:2} holds.  As for \ref{il:1}, the rank of $(f_V)_* \sB_V$ is
  computed as follows,
  \begin{align*}
    \rank \bigl( (f_V)_* \sB_V \bigr) & = h^0 \bigl( X_0,\, \sB_0 \bigr) && \text{Isomorphism~\ref{il:2} in case $t=0$} \\
    & = h^0 \bigl( X_0,\, (η_0)^* \sO_{\bP²}(1) \bigr) = 3.  && \text{Definition of $\sB_0$.}
  \end{align*}
  Surjectivity of $r_t$, as asserted in \ref{il:3}, follows because $V$ was
  taken to be affine.  This finishes the proof of Claim~\ref{claim:n2}.
\end{proof}

To simplify notation, we shrink $U$ if necessary, and assume the following.

\begin{asswlog}\label{awlog:1}
  Items~\ref{il:1}--\ref{il:3} of Claim~\ref{claim:n2} hold on $U$.
\end{asswlog}

Construct $Y_U$ as a $\bP²$-bundle over $U$ by setting $\sF_U := (f_U)_* \sB_U$
and $Y_U := \bP_U(\sF_U)$.  Maintain these choices for the remainder of the
proof.

\subsubsection*{Step 5.  Factorization of $f$}

In this step, it will be shown that the morphism $f_U$ factorizes via $Y_U$.
The following claim will be important.

\begin{claim}\label{claim:n3}
  There exists an open neighborhood $V = V(0) ⊆ U$ such that $f$ is smooth over
  $V$ and such that the natural evaluation morphism,
  $$
  e : (f_U)^* (f_U)_* \sB_U → \sB_U,
  $$
  is surjective on $X_V$.
\end{claim}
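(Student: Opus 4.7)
The plan is first to show that the restriction $e|_{X_0}$ is surjective, and then to spread this surjectivity to a neighbourhood of $0$ using coherence of the cokernel together with properness of $f$. Smoothness of $f$ over the resulting neighbourhood is automatic, since $U$ was chosen in Step~1 already to make $f|_{X_U}$ smooth.

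First I would use the base change isomorphism of Item~\ref{il:2} of Claim~\ref{claim:n2} to identify the fibre of $(f_U)_* \sB_U$ at $0$ with $H^0 \bigl( X_0,\, \sB_0 \bigr)$. Under this identification the restriction of $e$ to $X_0$ becomes the tautological evaluation morphism
\[
H^0 \bigl( X_0,\, \sB_0 \bigr) \tensor_k \sO_{X_0} \longrightarrow \sB_0.
\]
By construction $\sB_0 = η_0^* \sO_{\bP²}(1)$, so arguing via the Leray spectral sequence exactly as in the proof of Claim~\ref{claim:n1} (using $(η_0)_* \sO_{X_0} = \sO_{\bP²}$ and the vanishing of higher direct images along the $\bP¹$-bundle $η_0$) one identifies $H^0 \bigl( X_0,\, \sB_0 \bigr)$ with $H^0 \bigl( \bP²,\, \sO_{\bP²}(1) \bigr)$. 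Since $\sO_{\bP²}(1)$ is globally generated on $\bP²$ and any generating set pulls back to a generating set of $η_0^* \sO_{\bP²}(1)$, the displayed evaluation morphism is surjective, and hence so is $e|_{X_0}$.

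Next I would spread this fibrewise surjectivity out. Set $\sC := \coker(e)$, a coherent sheaf on $X_U$ whose restriction to $X_0$ vanishes by the previous step. By Nakayama's lemma, $\supp(\sC) \subseteq X_U$ is closed and disjoint from $X_0$. Since properness is stable under base change, $f_U$ is proper and therefore closed, so $f_U \bigl( \supp(\sC) \bigr)$ is a closed subset of $U$ not containing $0$. Taking
\[
V := U \setminus f_U \bigl( \supp(\sC) \bigr)
\]
yields an open neighbourhood of $0$ on which $\sC$ vanishes and hence on which $e$ is surjective, while smoothness of $f|_{X_V}$ is inherited from smoothness over $U$.

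The only genuinely delicate point is the fibrewise reduction in the second paragraph, which relies on the base change isomorphism of Claim~\ref{claim:n2} and ultimately on the cohomology vanishing of Claim~\ref{claim:n1}. Once that identification is in hand, the remaining spread-out argument is a standard application of properness and coherence, so I do not anticipate any further obstacle beyond carefully stringing together the pieces already constructed.
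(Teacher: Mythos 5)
Your proof is correct and follows essentially the same two-step scheme as the paper: first establish surjectivity of $e$ along $X_0$, then use coherence of $\coker(e)$ plus properness of $f_U$ to spread this to a Zariski neighbourhood of $0$. The only real difference lies in how the surjectivity over $X_0$ is obtained. The paper works pointwise: for each closed point $x \in X_0$ it shows the restriction $r_x : H^0(X_U, (f_U)^*(f_U)_*\sB_U) \to H^0(\{x\}, \sB_U|_{\{x\}})$ is surjective, by factoring it through the isomorphism $\phi$ (which uses connectedness of fibres, Assumption~\ref{ass:conn}), the restriction $r_1$ (surjective by Item~\ref{il:3} of Claim~\ref{claim:n2}), and the evaluation $r_2$ (surjective because $\sB_0 = \eta_0^*\sO_{\bP^2}(1)$ is globally generated). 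You instead invoke the base change isomorphism of Item~\ref{il:2} of Claim~\ref{claim:n2} to identify $e|_{X_0}$ directly with the tautological evaluation $H^0(X_0,\sB_0)\otimes_k\sO_{X_0}\to\sB_0$, and then conclude from global generation in one stroke. Both routes rest on the same two ingredients (Claim~\ref{claim:n2} and global generation of $\sB_0$) and are of comparable length; yours is marginally cleaner in that it bypasses the isomorphism $\phi$, while the paper's avoids an explicit appeal to base change on fibres. Your Nakayama step to pass from $\coker(e)|_{X_0}=0$ to $\supp(\coker e)\cap X_0=\emptyset$ is sound, and the properness argument is identical to the paper's.
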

\begin{proof}[Proof of Claim~\ref{claim:n3}]
  Let $\Bs(\sB_U) ⊂ X_U$ be the base point locus of the sheaf $\sB$ on $X_U$.
  More precisely, let $\Bs(\sB_U)$ be the support of $\coker(e)$, with its
  natural structure as a proper closed, reduced subscheme of $X_U$.  We claim
  that $\Bs(\sB_U)$ does not intersect the fiber $X_0$, that is, $\Bs(\sB_U) ∩
  X_0 = \emptyset$.  Once this is shown, set
  $$
  V := U \setminus f_U \bigl( \Bs(\sB_U) \bigr).
  $$
  Since $f_U$ is proper, this will be an open neighborhood of $0 ∈ U$ with all
  desired properties.

  In order to prove the claim, it suffices to show that the natural restriction
  $$
  r_x : H^0 \bigl( X_U,\, (f_U)^* (f_U)_* \sB_U \bigr) → H^0 \bigl( \{x\}, \sB_U|_{\{x\}} \bigr)
  $$
  is surjective, for any closed point $x ∈ X_0$.  However, given any such $x$,
  observe that the morphism $r_x$ factors as follows,
  $$
  \xymatrix{ %
    H^0 \bigl( X_U,\, (f_U)^* (f_U)_* \sB_U \bigr) \ar@/^0.2cm/[drrr]^{r_x} \ar[d]_{φ}^{\text{isomorphism}}\\
    H^0 \bigl( X_U,\, \sB_U \bigr) \ar@{->>}[r]^(.45){r_1}_(.45){\text{restr.~to $X_0$}} & H^0 \bigl( X_0, \sB_U|_{X_0} \bigr) \ar@{->>}[rr]^(.45){r_2}_(.45){\text{restr.~to $\{x\}$}} && H^0 \bigl( \{x\}, \sB_U|_{\{x\}} \bigr).
  }
  $$
  In the diagram above, the morphism $φ$ is the inverse of the natural map $H^0
  \bigl( X_U,\, \sB_U \bigr) → H^0 \bigl( X_U,\, (f_U)^* (f_U)_* \sB_U \bigr)$,
  which is isomorphic because the fibers of $f_U$ are connected by
  Assumption~\ref{ass:conn}.  Surjectivity of $r_1$ holds by
  Assumption~\ref{awlog:1}.  Surjectivity of $r_2$ holds by choice of
  $\sB_U|_{X_0} = \sB_0$.  It follows that $r_x$ is surjective.  This finishes
  the proof of Claim~\ref{claim:n3}.
\end{proof}

As before, we shrink $U$ if necessary, and assume the following.

\begin{asswlog}\label{awlog:3}
  The evaluation morphism $e$ is surjective on $X_U$.
\end{asswlog}

Recall from \cite[II Prop.~7.12]{Ha77} that to give a morphism $X_U → Y_U =
\bP_U( \sF_U )$ over $U$, it is equivalent to give an invertible sheaf $\sL$ on
$X_U$ and a surjective map of sheaves $(f_U)^*(\sF_U) = (f_U)^* (f_U)_* \sB_U →
\sL$.  Setting $\sL := \sB_U$, the evaluation map $e$ considered above therefore
gives rise to a factorization of $f_U$,
$$
\xymatrix{
  X_U \ar[rr]_{η_U} \ar@/^4mm/[rrrr]^{f_U} && Y_U \ar[rr]_{β_U\text{, $\bP²$-bundle}} && U.
}
$$

\subsubsection*{Step 6.  The central fiber of $β_U$}

We claim that the fiber $F := β_U^{-1}(0)$ is canonically isomorphic to $Y_0
\cong \bP²$, and that this isomorphism identifies the restricted map $η_U|_{X_0}
: X_0 → F$ with the projection map $η_0 : X_0 → Y_0$.  With these
identifications, our choice of notation is consistent: $η_0 = η_U|_{X_0}$ and
$Y_0 = β_U^{-1}(0)$.

Both claims follow from compatibility of $\sProj$ and base change,
\cite[Prop.~3.5.3]{EGA2}.  More precisely,
\begin{align*}
  F = β_U^{-1}(0) & = \sProj \Sym \bigl(\sF_U \otimes_{\sO_U} k(0) \bigr) && \text{Base change} \\
  & = \Proj \Sym H^0 \bigl( X_0,\, \sB_0 \bigr) && \text{Claim~\ref{claim:n2}, Item~\ref{il:2}} \\
  & = \Proj \Sym H^0 \bigl( X_0,\, (η_0)^*\sO_{Y_0}(1) \bigr) && \text{Definition of $\sB_0$.}
\end{align*}

\subsubsection*{Step 7.  Fibers of the morphism $η_U$}

\begin{claim}[Smoothness of $η$]\label{claim:n4}
  There exists an open neighborhood $V = V(0) ⊆ U$ such that $η_V$ is smooth
  over $Y_V$.
\end{claim}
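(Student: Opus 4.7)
The plan is to combine three ingredients that have all been arranged in the preceding steps: smoothness of the central-fiber restriction $\eta_0$, smoothness of the total spaces $X_U$ and $Y_U$, and properness of $f_U$. The argument is a standard openness-of-smoothness argument with properness used to descend openness from the source to the base.

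First, after possibly shrinking $U$, Remark~\ref{rem:2} guarantees that $X_U$ is smooth. The base $U$ is smooth as an open subset of $\bA^1$, and so $Y_U = \bP_U(\sF_U)$ is smooth as well, being a projective bundle over a smooth base. So both source and target of the morphism $\eta_U : X_U \to Y_U$ constructed in Step~5 are smooth varieties over $k$.

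Next, Step~6 identifies $\eta_U|_{X_0}$ with the map $\eta_0 : X_0 \to Y_0 = \bP^2$, which is the structure morphism of the $\bP^1$-bundle $\bP_{\bP^2}(\sE_0) \to \bP^2$ and hence smooth. By the standard fact that the locus on the source at which a finite-type morphism of noetherian schemes is smooth forms an open subset, the set
\[
W := \{ x \in X_U \,:\, \eta_U \text{ is smooth at } x \}
\]
is open in $X_U$ and, by the previous observation, contains the whole fiber $X_0$.

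Finally, since $f$ is proper, so is its restriction $f_U : X_U \to U$. Therefore the image $f_U(X_U \setminus W)$ is a closed subset of $U$, and because $X_0 \subseteq W$ it does not contain the point $0$. Setting
\[
V := U \setminus f_U(X_U \setminus W)
\]
yields an open neighborhood of $0$ with $X_V \subseteq W$, so that $\eta_V : X_V \to Y_V$ is smooth, as claimed. I do not anticipate a real obstacle here: all nontrivial inputs (smoothness of $\eta_0$, smoothness of $X_U$ and $Y_U$, properness of $f$) have already been established, and the argument is essentially one line once these are in place.
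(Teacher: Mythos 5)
There is a gap in your argument at the crucial point: you assert that because $\eta_U|_{X_0}=\eta_0$ is smooth, the smooth locus $W$ of $\eta_U$ contains $X_0$.  That implication does not follow from the openness of the smooth locus.  Smoothness of the \emph{restriction} $\eta_U|_{X_0}:X_0\to Y_0$ is a statement about a morphism between the central fibers, while membership of a point $x\in X_0$ in $W$ is a statement about the full morphism $\eta_U:X_U\to Y_U$ at $x$; the latter requires, in particular, flatness of $\eta_U$ at $x$, which is not automatic.  (Compare the toy example $\bA^1\to\bA^1$, $x\mapsto x^2$: source and target are smooth and the map even has smooth fibers over all points $\ne 0$, yet the map is not smooth at $0$.)  Smoothness of the source and target as $k$-schemes, which you also invoke, is likewise insufficient.

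The missing ingredient is the fibrewise criterion for smoothness, which is exactly what the paper uses: by \cite[II Cor.~2.2]{SGA1}, given the factorization $f_U=\beta_U\circ\eta_U$ with $f_U$ smooth at $x$ (this is Remark~\ref{rem:2}), the morphism $\eta_U$ is smooth at $x$ if and only if the induced morphism on fibers over $0\in U$, namely $\eta_U|_{X_0}:X_0\to\beta_U^{-1}(0)$, is smooth at $x$.  So you need to actually \emph{combine} the smoothness of $f_U$ and the smoothness of $\eta_0$ via this criterion; the two facts are both present in your write-up, but without the bridge the argument does not close.  Once the containment $X_0\subseteq W$ is established this way, the remaining properness argument (take $V:=U\setminus f_U(X_U\setminus W)$) is correct and is exactly what the paper does.
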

\begin{proof}[Proof of Claim~\ref{claim:n4}]
  Let $B ⊂ X_U$ be the closed set where the morphism $η_U$ is \emph{not} smooth.
  We claim that $B$ does not intersect the fiber $X_0$, that is, $B ∩ X_0 =
  \emptyset$.  Once this is shown, set
  $$
  V := U \setminus f_U( B ).
  $$
  Since $f_U$ is proper, this will be an open neighborhood of $0 ∈ U$ with all
  desired properties.

  In order to establish the claim, let $x ∈ X_0$ be any closed point.  We will
  show that $η_U$ is smooth at $x$ by using the criterion \cite[II
  Cor.~2.2]{SGA1}: the morphism $η_U$ is smooth at $x$ if $f_U = β_U ◦ η_U$ is
  smooth at $x$, and if the restriction of $η_U$ to the fibers, $η_U|_{X_0} :
  X_0 → η_U^{-1}(0)$ is smooth.  Smoothness of $f_U$ at $x$ holds by
  Remark~\ref{rem:2}.  Smoothness of $η_U|_{X_0}$ has been established in Step 6
  above.  This finishes the proof of Claim~\ref{claim:n4}.
\end{proof}

Claim~\ref{claim:n4} and the same reasoning as in Step~3 allow to make the
following additional assumptions.

\begin{asswlog}\label{ass:c2}
  The morphism $η_U$ is smooth.  Its fibers are connected.
\end{asswlog}

\begin{claim}\label{claim:n5}
  If $y ∈ Y_U$ is any closed point with associated fiber $X_y := η_U^{-1}(y)$,
  then $X_y \cong \bP¹$.
\end{claim}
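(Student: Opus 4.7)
The plan is to show that every fiber $X_y$ of $\eta_U$ is a smooth, connected, projective curve of arithmetic genus zero; since $k$ is algebraically closed, this forces $X_y \cong \bP¹$.

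First, I would verify that $\eta_U$ is proper. Because $f_U = \beta_U \circ \eta_U$ is proper and $\beta_U$ is separated (being a projective bundle), the standard cancellation property for proper morphisms yields that $\eta_U$ itself is proper. Combined with Assumption~\ref{ass:c2}, the morphism $\eta_U$ is therefore smooth and proper with connected fibers; smoothness in particular implies flatness. A short dimension count then shows that these fibers are one-dimensional: flatness of $f$ (Remark~\ref{rem:2}) together with $\dim X_0 = \dim \bP_{\bP²}(\sE_0) = 3$ gives $\dim X_U = 4$, whereas $Y_U = \bP_U(\sF_U)$ is a $\bP²$-bundle over the one-dimensional $U$, so $\dim Y_U = 3$. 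Consequently each $X_y$ is a smooth, connected, projective curve.

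To pin down the genus, I would invoke the classical fact that $\chi(\sO_{W_y})$ is locally constant on $Y$ for any flat proper family $W \to Y$, applied to the family $\eta_U$ over the connected base $Y_U$. This reduces the computation of $p_a(X_y)$ to a single special fiber. For any $y$ in the central fiber $Y_0 = \beta_U^{-1}(0)$, Step~6 identifies $X_y$ with $\eta_0^{-1}(y)$, which is a fiber of the original $\bP¹$-bundle structure, so $X_y \cong \bP¹$ and $\chi(\sO_{X_y}) = 1$, i.e., $p_a(X_y) = 0$. By constancy, $p_a(X_y) = 0$ for every $y \in Y_U$, and a smooth connected projective curve of genus zero over an algebraically closed field is isomorphic to $\bP¹$.

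The main obstacle is really only bookkeeping; all conceptual inputs are classical, and the pivotal one is the constancy of arithmetic genus in a flat proper family over a connected base, which transports the known computation on the central fiber $Y_0$ to all of $Y_U$.
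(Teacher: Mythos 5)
Your proof is correct and follows essentially the same route as the paper's: both invoke the local constancy of $\chi(\sO_{X_y})$ in the flat proper family $\eta_U$ over the connected base $Y_U$, transport the known computation $\chi(\sO_{X_y}) = 1$ from fibers over $Y_0$, and conclude that all fibers are smooth connected genus-zero curves, hence $\bP^1$. The only difference is that you spell out a few preliminary verifications (properness of $\eta_U$ by cancellation, the dimension count) that the paper treats as immediate from Assumption~\ref{ass:c2}.
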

\begin{proof}[Proof of Claim~\ref{claim:n5}]
  Assumption~\ref{ass:c2} implies that the fibers of $η_U$ are complete, smooth,
  connected curves.  As before, \cite[Cor.~on p.~50]{MR2514037}, guarantees
  that the function
  $$
  χ : Y_U → \bN, \quad y \mapsto \sum_{i ∈ \bN} (-1)^i h^i\bigl( X_y, \,
  \sO_{X_y} \bigr)
  $$
  is constant on $Y_U$.  Since $χ(y) = 1-g(X_y)$ for any closed point $y ∈ Y_U$
  and since $X_y \cong \bP¹$ if $y ∈ X_0$, it follows that all fibers of $η_U$
  are isomorphic to $\bP¹$.  This finishes the proof of Claim~\ref{claim:n5}.
\end{proof}

\subsubsection*{Step 8.  End of proof}

To end the proof, we need to show that the smooth morphism $η_U$ has the
structure of a $\bP¹$-bundle.  Since all its fibers are isomorphic to $\bP¹$ and
since the invertible sheaf $\sA$ has degree one on each fiber, this follows
quickly from arguments that are quite similar to those used in Steps~4 and 5.
For projective morphisms between complex varieties, everything has been shown in
\cite[Lem.~2.12]{MR946238}.

We aim to construct an explicit $\bP¹$-bundle which will then turn out to be
isomorphic to $X_U$.  To this end, set $\sE_U := (η_U)_*(\sA)$ and observe that
$\sE_U|_{Y_0} \cong \sE_0$ by choice of $\sA$.  Since all fibers $X_y$ are
isomorphic to $\bP¹$ and since the invertible sheaf $\sA$ has degree one on
these fibers, it follows that the function
$$
φ : Y_U → \bN, \quad y \mapsto h^0\bigl( X_y, \, \sA_U|_{X_y} \bigr)
$$
is constant of value two.  As before, invoke \cite[Cor.~2 on p.~50]{MR2514037}
to conclude that $\sE_U$ is locally free of rank two.  Using that $\sA_U|_{X_y}$
is identified with $\sO_{\bP¹}(1)$ and is hence basepoint-free for any closed
point $y ∈ Y_U$, a minor variant of the argumentation used in the proof of
Claim~\ref{claim:n3} reveals that the evaluation map
$$
(η_U)^* (η_U)_* \sA_U → \sA_U
$$
is surjective.  As before, we have thus constructed a refined factorization of
$f_U$,
$$
\xymatrix{
  X_U \ar[rr]_(.45){\phantom{\bP²}φ_U\phantom{\bP²}} \ar@/^4mm/[rrrrrr]^{f_U} && \bP_{Y_U}(\sE_U) \ar[rr]_(.55){α_U\text{, $\bP¹$-bundle}} && Y_U \ar[rr]_{β_U\text{, $\bP²$-bundle}} && U.
}
$$
By construction, the restriction of the $φ_U$ to any fiber $X_y$ is identified
with the morphism induced by the very ample invertible sheaf $\sA_U|_{X_y} \cong
\sO_{\bP¹}(1)$, that is,
$$
\bP¹ → \bP \Bigl( H^0\bigl( \bP¹,\, \sO_{\bP¹}(1) \bigr) \Bigr).
$$
This has two consequences.  First, the smoothness criterion \cite[II
Cor.~2.2]{SGA1} applies to show that $φ_U$ is smooth.  In particular, $φ_U$ is
separable, \cite[Chapt.~AG, Thm.~17.3]{B91}.  Second, it follows that the
morphism $φ_U$ is bijective.  By \cite[Sect.~2]{Gro5658} or \cite[Thm.~on
p.~43]{B91}, the induced morphism between functions fields has separable degree
equal to one.  It follows that $φ_U$ is birational.  Since all spaces in
question are smooth, hence normal, Zariski's Main Theorem,
\cite[Lem.~8.12.10.1]{EGA4-3}, therefore guarantees that $φ_U$ is isomorphic.
This finishes the proof of Theorem~\ref{thm:rigidity}.  \qed

\subsection{Deformations and moduli}
\label{ssec:deformation}

We recall Strømme's results on moduli of vector bundles and draw first
conclusions concerning deformability and non-deformability of vector bundles.

\subsubsection{Notation and known facts}
\label{ssec:5B1}

Projectivizations of rank-two vector bundles are the main objects of interest in
this paper.  In the discussion, we will often be free to twist any given vector
bundle with a suitable line bundle, allowing to assume that the bundle's first
Chern class is either zero or minus one.

\begin{setting}[Choice of Chern classes]\label{setting:stroemme}
  Fix two numbers $c_1 ∈ \{0,-1\}$ and $c_2 ∈ \bZ$.
\end{setting}

\begin{defn}[Families of bundles]\label{def:families}
  Let $T$ be a $k$-scheme.  Given numbers $c_1$ and $c_2$, a \emph{family}
  $\mathfrak{E}/T$ of rank-two vector bundles on $\bP²$ with Chern classes $c_1$
  and $c_2$ is a rank-two bundle $\sE$ on $T ⨯ \bP²$ such that for any
  $k$-valued point $t ∈ T$, the fiber $\sE_t$ is a rank-two bundle on $\bP²$,
  with Chern classes $c_1(\sE_t) = c_1$ and $c_2(\sE_t) = c_2$.
\end{defn}

\begin{defn}[\protect{Pure type, \cite[Sect.~2.3]{stroemme}}]\label{def:ptfamilies}
  In the setting of Definition~\ref{def:families}, given any integer $d ≥ 0$,
  the family $\mathfrak{E}/T$ is said to be of \emph{pure type $d$}, if $\bR² π_*
  \bigl( \sE^*(d-3)\bigr)$ is invertible, where $π : T ⨯ \bP² → T$ is the
  natural projection map.
\end{defn}

\begin{fact}[\protect{Type and pure type, \cite[Rem.~2.4]{stroemme}}]
  In the setting of Definition~\ref{def:ptfamilies}, if $\mathfrak{E}/T$ is of
  pure type $d$, then all bundles $\sE_t$ have generic splitting type $d$.
  \qed
\end{fact}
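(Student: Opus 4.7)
The plan is to translate the invertibility hypothesis on $\bR^2\pi_*(\sE^*(d-3))$ into concrete information about $h^0(\bP^2, \sE_t(-d'))$ for $d' \geq d$, and then read the generic splitting type $d(\sE_t)$ directly off Definition~\ref{def:type}.

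First I would check that $\bR^2\pi_*(\sE^*(d-3))$ is compatible with base change along each $k$-valued point $t \in T$. Since $\pi : T \times \bP^2 \to T$ has two-dimensional fibers, one has $\bR^3\pi_*(-) = 0$ identically, so the standard cohomology-and-base-change theorem applies in the top degree: the comparison map at $t$ in degree three is trivially surjective, and local freeness of $\bR^2\pi_*(\sE^*(d-3))$ then forces the degree-two comparison map $\bR^2\pi_*(\sE^*(d-3)) \otimes k(t) \to H^2(\bP^2, \sE_t^*(d-3))$ to be surjective, hence an isomorphism. Together with invertibility this yields $h^2(\bP^2, \sE_t^*(d-3)) = 1$, and Serre duality on $\bP^2$ (using $\omega_{\bP^2} \cong \sO_{\bP^2}(-3)$) converts this into $h^0(\bP^2, \sE_t(-d)) = 1$ for every $k$-valued $t \in T$.

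The main content of the fact is then to show that $d$ is really the maximum such integer, i.e.\ that $h^0(\bP^2, \sE_t(-d-e)) = 0$ for every $e \geq 1$. I would argue by contradiction: given $0 \neq s \in H^0(\bP^2, \sE_t(-d-e))$, multiplication by the $e+1$ monomials $X^e, X^{e-1}Y, \ldots, Y^e \in H^0(\bP^2, \sO_{\bP^2}(e))$ produces that many sections $X^{e-i} Y^i \cdot s \in H^0(\bP^2, \sE_t(-d))$. A short argument, exploiting that $s$ is nowhere-vanishing on a dense open subset of the irreducible scheme $\bP^2$ in order to cancel $s$ from a putative $k$-linear relation, shows these sections are linearly independent, contradicting $h^0(\bP^2, \sE_t(-d)) = 1 < e+1$. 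At the same time, the nonzero section of $\sE_t(-d)$ exhibits a subsheaf $\sO_{\bP^2}(d) \hookrightarrow \sE_t$ of slope $d \geq 0 \geq c_1/2 = \mu(\sE_t)$, so $\sE_t$ is not slope-stable.

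Combining these, for every $k$-valued $t \in T$ the bundle $\sE_t$ is not slope-stable and $d$ is the largest integer with $H^0(\bP^2, \sE_t(-d)) \neq 0$, so Definition~\ref{def:type} yields $d(\sE_t) = d$. The only mild subtlety is the correct invocation of cohomology-and-base-change; the multiplication-by-monomials step, while the most substantive part, is purely elementary.
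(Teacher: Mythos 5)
Your proof is correct, but a direct comparison with ``the paper's own proof'' is moot: the statement is imported as a \texttt{Fact} from Strømme's \cite[Rem.~2.4]{stroemme} with a \texttt{\textbackslash qed} and no argument given, so there is nothing in the present paper to compare against. What you supply is the natural verification one would expect to find in the cited source.

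Two small remarks on the write-up. First, in the base-change step the relevant local-freeness hypothesis is that of $\bR^3\pi_*$, not $\bR^2\pi_*$: one applies the exchange property with $i=3$, where $\phi^3(t)$ is trivially surjective and $\bR^3\pi_* = 0$ is locally free, to conclude that $\phi^2(t)$ is surjective and hence (by the first part of the theorem) an isomorphism; the invertibility of $\bR^2\pi_*(\sE^*(d-3))$ is then only used to read off $h^2(\bP^2,\sE_t^*(d-3))=1$. Second, the monomial step can be shortened: since $\sE_t(-d)$ is locally free, hence torsion-free, multiplication by a nonzero $s \in H^0(\bP^2,\sE_t(-d-e))$ gives an \emph{injective} map $H^0(\bP^2,\sO_{\bP^2}(e)) \hookrightarrow H^0(\bP^2,\sE_t(-d))$, so $h^0(\bP^2,\sE_t(-d)) \geq \binom{e+2}{2} > 1$, a contradiction; this replaces the linear-independence discussion. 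Finally, note that the non-stability conclusion uses both $d \geq 0$ (from Definition~\ref{def:ptfamilies}) and $c_1 \in \{0,-1\}$ (from Setting~\ref{setting:stroemme}) to get $d \geq c_1/2 = \mu(\sE_t)$; it is worth saying explicitly that you are working in that setting.
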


\begin{fact}[\protect{Semicontinuity, \cite[Sect.~2.2]{stroemme}}]\label{fact:semincontST}
  In the setting of Definition~\ref{def:families}, the generic splitting type is
  upper semicontinuous as a function on the closed points of $T$.  Given any $d
  ≥ 0$, there exists a maximal, locally closed subscheme $T(d) ⊆ T$
  over which the bundle is of pure type $d$.  \qed
\end{fact}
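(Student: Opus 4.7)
The plan is to reduce both assertions to the standard semicontinuity and cohomology-and-base-change theorems applied to the proper flat morphism $π : T ⨯ \bP² → T$, after using Serre duality to re-express the condition $h^0(\bP², \sE_t(-d)) > 0$ in terms of $H²$ of the dual bundle.

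Since $ω_{\bP²} \cong \sO_{\bP²}(-3)$, Serre duality yields, fiber by fiber, a canonical identification
$$
H^0 \bigl( \bP²,\, \sE_t(-d) \bigr) \cong H² \bigl( \bP²,\, \sE_t^*(d-3) \bigr)^*.
$$
By the semicontinuity theorem, cf.\ \cite[III Thm.~12.8]{Ha77}, the function $t \mapsto h² \bigl( \bP²,\, \sE_t^*(d-3) \bigr)$ is then upper semicontinuous on $T$.  For any integer $d_0 ≥ 0$, the locus $\{t : d(\sE_t) ≥ d_0\}$ coincides with $\{t : h^0(\sE_t(-d_0)) > 0\} = \{t : h²(\sE_t^*(d_0-3)) ≥ 1\}$, which is closed; together with the trivial observation that $\{t : d(\sE_t) ≥ -1\} = T$, this yields the first assertion.

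For the second assertion, define $T(d) ⊆ T$ set-theoretically as $\{t : h²(\bP², \sE_t^*(d-3)) = 1\}$.  Being the intersection of the closed set $\{h² ≥ 1\}$ with the open set $\{h² ≤ 1\}$, this is locally closed.  To endow $T(d)$ with a scheme structure, invoke the cohomology and base change theorem, cf.\ \cite[III Thm.~12.11]{Ha77}, which produces a canonical locally closed subscheme over which $\bR² π_*(\sE^*(d-3))$ is invertible and commutes with base change; fiber-by-fiber this agrees with the set-theoretic locus above.  Maximality is automatic from the base-change property: any morphism $T' → T$ over which the pulled-back family is of pure type $d$ satisfies the rank-one condition on $\bR² π_*$ by flat base change, and therefore factors through $T(d)$.

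The only genuine subtlety is the interaction between the fiber-wise dimension condition and the scheme-theoretic condition that $\bR² π_*(\sE^*(d-3))$ be an invertible sheaf; this is handled by the standard flattening-stratification arguments surrounding \cite[III Sect.~12]{Ha77}.  Beyond this bookkeeping I do not anticipate any serious obstacle, which is consistent with the statement being quoted from \cite[Sect.~2.2]{stroemme} rather than reproved.
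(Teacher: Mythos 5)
The paper quotes this as a Fact from Strømme's paper without reproducing a proof, so there is no internal argument to compare against; your reduction via Serre duality plus semicontinuity of $h^2$ for the first claim, and the flattening stratification for $\bR^2\pi_*(\sE^*(d-3))$ for the second, is correct and is the natural route -- indeed it explains why Strømme's Definition~\ref{def:ptfamilies} is phrased directly in terms of $\bR^2\pi_*(\sE^*(d-3))$ being invertible. One wording imprecision: for the universal property of $T(d)$ you need that $\bR^2\pi_*$ commutes with \emph{arbitrary} base change, which holds here because $\bR^2$ is the top non-vanishing direct image for the relative dimension-two morphism $\pi$, not ``flat base change'' as you wrote; and the scheme structure on $T(d)$ comes from the flattening stratification of the coherent sheaf $\bR^2\pi_*(\sE^*(d-3))$ rather than from \cite[III Thm.~12.11]{Ha77} per se, though you acknowledge this. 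These are matters of phrasing and citation, not gaps.
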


\begin{fact}[\protect{Existence of moduli spaces, \cite[Props.~1.2 and 2.7]{stroemme}}]\label{fact:exMd}
  Given numbers $c_1$ and $c_2$ as in Setting~\ref{setting:stroemme}, let $d ≥
  0$ be any number.  Then, there exists a coarse moduli scheme $M(d)$ for
  families of rank-two vector bundles on $\bP²$ of pure type $d$, modulo
  isomorphism and twists by line bundles coming from the base.  The dimension of
  $M(d)$ is computed as follows.
  \begin{enumerate}
  \item If $d²-dc_1+c_2 < 0$, then $M(d)$ is empty.
  \item If $d²-dc_1+c_2 = 0$, then $M(d)$ is a point.
  \item If $d²-dc_1+c_2 > 0$, then $\dim M(d) = 3(d²-dc_1+c_2)-1$.
  \end{enumerate}
  The scheme $M(d)$ is either empty, or irreducible, nonsingular,
  quasiprojective and rational.  \qed
\end{fact}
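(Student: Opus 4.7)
The plan is to use the Hartshorne--Serre correspondence (Fact~\ref{fact:arrondo1}, Theorem~\ref{thm:arrondo2}, and Remark~\ref{rem:specialcasep2}) to parameterize rank-two bundles of pure type $d$ on $\bP²$ by pairs $(Y,[s])$, where $Y\subset\bP²$ is a finite subscheme and $[s]$ is a nowhere-vanishing class of sections, and then to exhibit $M(d)$ as an open subscheme of a projective bundle over the Hilbert scheme of points on $\bP²$.

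For a bundle $\sE$ of pure type $d$ with Chern classes $c_1,c_2$, the purity condition of Definition~\ref{def:ptfamilies} forces, via Serre duality, that $h^0(\sE(-d))=1$, so the destabilizing section $\sO(d)\into\sE$ is unique up to scalar.  Its zero scheme $Y\subset\bP²$ is finite of length
$$
n := d²-dc_1+c_2,
$$
and Theorem~\ref{thm:arrondo2} yields the extension
$$
0 \to \sO(d) \to \sE \to \sI_Y(c_1-d) \to 0.
$$
Conversely, every pair $(Y,[s])$ with $|Y|=n$ and $[s]\in\bP(H^0(Y,\sO_Y))$ nowhere vanishing recovers such an $\sE$.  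Two of the three cases follow at once: if $n<0$ no such $Y$ exists and $M(d)=\emptyset$; if $n=0$ then $Y=\emptyset$, the extension splits, and $\sE\cong\sO(d)\oplus\sO(c_1-d)$ is the unique bundle, so $M(d)$ is a single point.

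For $n>0$ I would work in families.  Let $\sY\subset\bP²\times\text{Hilb}^n(\bP²)$ denote the universal subscheme and $\pi$ its projection to $\text{Hilb}^n(\bP²)$.  The relative version of Hartshorne--Serre identifies the relative $\Ext¹$-sheaf with the rank-$n$ locally free sheaf $\pi_*\sO_{\sY}$; then $M(d)$ is carved out as the open subscheme of the projective bundle $\bP(\pi_*\sO_{\sY})$ on which the induced extension is locally free, an open condition by Theorem~\ref{thm:arrondo2}.  Quasiprojectivity is then automatic, the dimension count
$$
\dim M(d) \;=\; \dim \text{Hilb}^n(\bP²) + (n-1) \;=\; 2n + (n-1) \;=\; 3n-1
$$
is immediate, and smoothness, irreducibility, and rationality all descend from the corresponding properties of $\text{Hilb}^n(\bP²)$ (smooth and irreducible by Fogarty, birational to $\Sym^n(\bP²)$ and hence rational) together with the projective bundle structure.

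The main obstacle—and the content of Strømme's \cite[Props.~1.2 and~2.7]{stroemme}—is upgrading this parameter space to a bona fide coarse moduli scheme for the functor of families of pure-type-$d$ bundles modulo isomorphism and twists by line bundles from the base.  One must check that the scaling ambiguity on the Hartshorne--Serre section, combined with the gauge action of Picard-twists pulled back from $\text{Hilb}^n(\bP²)$, cuts out exactly the stipulated equivalence relation, and that the resulting quotient coarsely represents the functor.  Strømme handles this via a closely related but technically cleaner presentation by monads and Beilinson-style resolutions, which makes the GIT quotient transparent; the Hartshorne--Serre route outlined above gives the same geometric picture, and is conceptually adequate for the purposes of the present paper.
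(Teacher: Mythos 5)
This statement is a \emph{Fact} in the paper: it is cited directly from Strømme \cite[Props.~1.2 and 2.7]{stroemme} and the paper supplies no proof of its own (the closing \texttt{\textbackslash qed} marks the end of the cited statement, not of an argument). So there is no ``paper's proof'' to compare against; your job is instead being compared with Strømme's.

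Your sketch via the Hartshorne--Serre correspondence is geometrically sound and hits all the right points: the purity condition does force $h^0(\sE(-d))=1$ via Serre duality, the zero scheme of the unique destabilizing section has length $n=d^2-dc_1+c_2$, the case distinctions for $n<0$ and $n=0$ are correct (the splitting for $n=0$ uses $H^1(\bP^2,\sO(k))=0$, which you implicitly invoke), and the identification with an open subscheme of $\bP(\pi_*\sO_{\sY})$ over $\text{Hilb}^n(\bP^2)$ gives the dimension count, smoothness, irreducibility, quasiprojectivity, and rationality cleanly. You are also right to flag the real work as the coarse-moduli-space property, and that Strømme's actual proof uses monads and Beilinson-type resolutions rather than Hartshorne--Serre. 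Two small points you might tighten if turning this into a complete argument: (i) you should check that every bundle produced by the construction genuinely has generic splitting type exactly $d$ and is of pure type, i.e.\ $h^0(\sE(-d))=1$ and $h^0(\sE(-d-1))=0$ for the constructed $\sE$ (both follow quickly from the defining extension and the vanishing of $H^0(\sO(c_1-2d-1))$, but should be said); and (ii) the $H^1$ and $H^2$ vanishing hypotheses of Fact~\ref{fact:arrondo1} should be explicitly verified for $\sL=\sO(c_1-2d)$ so that the $\Ext^1$-group really is identified with $H^0(Y,\sO_Y)$ globally, not just locally.
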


\begin{fact}[\protect{Existence of maximal families, \cite[Sect.~3.1--3.6 and Thm.~3.9]{stroemme}}]\label{fact:exFam}
  Given numbers $c_1$ and $c_2$ as in Setting~\ref{setting:stroemme} and $d ≥
  -1$, then there exists a smooth, irreducible scheme $Q(d)$ and a family
  $\mathfrak{E}/Q(d)$ of rank-two vector bundles on $\bP²$ with Chern classes
  $c_1$ and $c_2$, such that $\mathfrak{E}/Q(d)$ is pure type $d$ and such that
  the induced moduli map $Q(d) → M(d)$ is surjective.  \qed
\end{fact}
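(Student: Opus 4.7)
The plan is to construct $Q(d)$ explicitly via the Hartshorne--Serre correspondence in the semistable range $d \geq 0$, and via a monad description in the slope-stable case $d = -1$.  Set $N := d² - dc_1 + c_2$.  For $d \geq 0$, a bundle $\sE$ of pure type $d$ carries a distinguished (up to scalar) section of $\sE(-d)$ whose zero scheme is a length-$N$ local complete intersection $Y ⊂ \bP²$, yielding an extension
\begin{equation*}
  0 \to \sO_{\bP²} \to \sE(-d) \to \sI_Y(c_1 - 2d) \to 0,
\end{equation*}
and conversely, by Theorem~\ref{thm:arrondo2}, every locally free extension of this form recovers, after twisting by $\sO_{\bP²}(d)$, a bundle of pure type $d$.

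First I would assemble $Q(d)$ as an open subscheme of a projective bundle over the Hilbert scheme $H := \mathrm{Hilb}^N(\bP²)$, which by Fogarty's theorem is smooth, irreducible, and quasiprojective of dimension $2N$.  Over $H$, form the relative $\Ext^1$-sheaf of the universal ideal twisted by $\sO(c_1 - 2d)$ against $\sO$.  The cohomology vanishings recorded in Remark~\ref{rem:specialcasep2}, applied in families through cohomology and base change, guarantee that this sheaf is locally free of rank $N$ with fiber at $[Y]$ identified with $H^0(Y, \sO_Y)$.  The associated projective bundle $\bP(\cdot) \to H$ parametrizes pairs $(Y, [s])$ of a subscheme and an extension class up to scalar; inside it, the condition that the extension class generates $\wedge² \sN_{\bP²/Y}$ defines an open locus by Theorem~\ref{thm:arrondo2} on which the tautological extension yields a family of locally free sheaves.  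Twisting by $\sO_{\bP²}(d)$ and intersecting with the pure-type-$d$ stratum from Fact~\ref{fact:semincontST} supplies $Q(d)$ and the desired family $\mathfrak{E}/Q(d)$; smoothness and irreducibility then follow from the projective bundle structure over $H$, the dimension count $2N + (N-1) = 3N-1$ matches Fact~\ref{fact:exMd}, and surjectivity of $Q(d) \to M(d)$ is built into the construction.

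The case $d = -1$ does not admit such a section-based description, since suitably normalized slope-stable bundles have no global sections.  Here I would instead appeal to Beilinson's monad resolution: every such rank-two bundle fits into an explicit three-term complex whose outer terms depend only on $c_1$ and $c_2$.  The open subscheme of the corresponding $\Hom$-scheme carved out by the pointwise injectivity and surjectivity conditions --- the standard Maruyama--Barth construction --- yields a smooth, irreducible parameter scheme $Q(-1)$ mapping surjectively to $M(-1)$.

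The main technical obstacle will be verifying the local freeness of the relative $\Ext^1$-sheaf and the compatibility of its formation with arbitrary base change, as the base Hilbert scheme includes non-reduced subschemes $Y$.  By Grothendieck's theorem on cohomology and base change, this reduces to the global vanishings $H^i \bigl(\bP², \sO(2d - c_1)\bigr) = 0$ for $i = 1, 2$, which hold whenever $2d - c_1 \geq -2$, a condition automatic under Setting~\ref{setting:stroemme} when $d \geq 0$.  A secondary concern is that non-reduced length-$N$ subschemes must still be local complete intersections for Theorem~\ref{thm:arrondo2} to apply; this is automatic because any length-$N$ closed subscheme of a smooth surface is cut out locally by two equations.
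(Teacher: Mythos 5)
The paper states this fact as a direct citation to Strømme and gives no proof beyond the \verb|\qed|; your reconstruction — realizing $Q(d)$ for $d \geq 0$ as an open locus in a projective bundle over $\mathrm{Hilb}^N(\bP^2)$ via the Hartshorne--Serre correspondence (with $N = d^2 - dc_1 + c_2$), and handling the stable case $d = -1$ by the Beilinson monad / Maruyama--Barth construction — is precisely the strategy of Strømme's Sections~3.1--3.6 and Theorem~3.9 that the paper is referring to. The one caveat you may wish to flag is that when $N < 0$ the Hilbert scheme is empty and $M(d) = \emptyset$ by Fact~\ref{fact:exMd}, so the assertion should be read as vacuous (or $Q(d)$ taken to be empty) in that degenerate range.
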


\begin{defn}[\protect{Deformability to given type over irreducible base, \cite[Sect.~2.12 and Thm.~3.13]{stroemme}}]\label{def:X}
  Given numbers $c_1$ and $c_2$ as in Setting~\ref{setting:stroemme}, numbers $d
  > e ≥ 0$, and a rank-two vector bundle $\sE$ on $\bP²$ of type $d$.  We say
  that $\sE$ is \emph{deformable to type $e$ over an irreducible base} if there
  exists an irreducible $k$-scheme $T$ and a family of bundles $\mathfrak{E}/T$
  with Chern classes $c_1$ and $c_2$ that is generically of type $e$ and
  contains $\sE$ as a fiber.
\end{defn}

\begin{fact}[\protect{Locus of deformable bundles, \cite[Sect.~3.12 and Thms.~3.13, 4.7]{stroemme}}]\label{fact:locDef}
  Given numbers $c_1$ and $c_2$ as in Setting~\ref{setting:stroemme}, and $d > e
  ≥ -1$.  Then, there exists a closed subset $M(d;e) ⊆ M(d)$ whose $k$-rational
  points are exactly those isomorphism classes of bundles that are deformable to
  type $e$ over an irreducible base.  If $M ⊆ M(d;e)$ is any irreducible
  component, then $\codim_{M(d)} M ≥ γ(d;e)$, where
  \begin{align*}
    γ(d;e) & :=
    \begin{cases}
      P(d) & \text{if } e=-1 \text{ or } e = c_1 = c_2 = 0 \\
      P(d)-P(e)+1 & \text{otherwise}.
    \end{cases} && \text{and}\\
    P(x) & := (x-1)(x-2-c_1)-c_2.
  \end{align*}
  If $\binom{d-e-1}{2} ≥ e²- e \cdot c_1 + c_2$, then $M(d;e)$ contains an
  irreducible component for which equality holds.  \qed
\end{fact}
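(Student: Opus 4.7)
Since this statement is a \emph{Fact} quoted from Strømme's work, there is no internal proof to be compared against; nevertheless, the plan below sketches how I would attempt a reconstruction using only the tools that have been assembled so far in the paper.

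The overall strategy is to parameterize rank-two bundles of pure type $d$ through the Hartshorne-Serre correspondence in the form of Fact~\ref{fact:arrondo1} and Theorem~\ref{thm:arrondo2}, and then analyze the incidence-geometric conditions forcing a bundle to be deformable to lower splitting type. First I would construct $M(d;e)$ as a subset of $M(d)$. Starting from the parameter space $Q(d)$ of Fact~\ref{fact:exFam} with its universal family, together with upper semicontinuity of the generic splitting type (Fact~\ref{fact:semincontST}), define $M(d;e)$ to be the image of the locus in $Q(d)$ of bundles that admit, in some irreducible family, a generic fiber of type $e$. Closedness of $M(d;e)$ then follows from properness of the moduli map $Q(d)\to M(d)$ combined with the semicontinuity statement.

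For the codimension estimate, the key observation is that a bundle $\sE$ of type $d$ carries a canonical (up to scalar) nowhere-zero section of $\sE(-d)$, yielding an extension
\[
0 \to \sO_{\bP²}(d) \to \sE \to \sI_Z(c_1-d) \to 0,
\]
where $Z\subset\bP²$ is a local complete intersection of length $d²-dc_1+c_2$. Thus, locally near $\sE$, the family over $Q(d)$ decomposes into the datum of $Z$ in the Hilbert scheme $\mathrm{Hilb}^{d²-dc_1+c_2}(\bP²)$ together with an extension class computed by Fact~\ref{fact:arrondo1}; comparison with $\dim M(d)$ in Fact~\ref{fact:exMd} recovers the expected dimension. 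Deformability to type $e$ translates into the condition that, generically on the base, the deformed $Z_t$ lies on the zero locus of a non-zero section of the deformed bundle twisted by $\sO(-e)$, forcing $Z_t$ to sit on a curve of degree $d-e$. A cohomological count of how many conditions this imposes on $Z$, using that a generic bundle of type $e$ is encoded by a length-$(e²-ec_1+c_2)$ subscheme, yields precisely the number $γ(d;e)=P(d)-P(e)+1$; in the degenerate ranges $e=-1$ or $e=c_1=c_2=0$ the residual type imposes no conditions and the count collapses to $γ(d;e)=P(d)$.

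For the final assertion (existence of a component achieving equality), I would construct an explicit family: start from a generic type-$e$ bundle with associated subscheme $Z'$ of length $e²-ec_1+c_2$, and degenerate it to a type-$d$ bundle by propagating the points of $Z'$ along a curve of degree $d-e-1$. The combinatorial inequality $\binom{d-e-1}{2}\geq e²-ec_1+c_2$ states exactly that such a curve carries enough zero-dimensional subschemes of the required length to support the degeneration, making the construction non-empty and the resulting component irreducible.

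The main obstacle will be the precise cohomological bookkeeping needed to turn the heuristic ``count of conditions'' into the closed-form estimate $γ(d;e)$: this requires a careful tangent-space computation at a bundle where the splitting type jumps, together with a control of how the loci of non-generic $Z$ in $\mathrm{Hilb}^{d²-dc_1+c_2}(\bP²)$ intersect the incidence variety cut out by the extension data. Ensuring that \emph{every} irreducible component of $M(d;e)$ — not just the principal one — satisfies the stated codimension bound is the most subtle part, and is where an appeal to the irreducibility and non-singularity of $M(d)$ from Fact~\ref{fact:exMd} becomes crucial.
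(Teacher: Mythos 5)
This statement is a \emph{Fact} quoted verbatim from Str{\o}mme's paper \cite{stroemme}; the present paper contains no proof of it, so there is no internal argument to compare against. What I can do is assess whether your reconstruction is a plausible path, and here there are several places where it departs from what Str{\o}mme actually does and where the claimed steps do not hold up.

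First, the closedness of $M(d;e)$: you invoke ``properness of the moduli map $Q(d)\to M(d)$'' but Fact~\ref{fact:exFam} only asserts that this map is surjective, not proper, and in any case the image of a locally closed subset of $Q(d)$ under a proper map need not be closed. Str{\o}mme's argument for closedness (his Theorem~3.13) is a separate, delicate valuative-criterion argument, not a consequence of the properness/semicontinuity package you cite. Second, and more importantly, the mechanism you propose for the codimension estimate --- that deformability to type $e$ forces the Hartshorne--Serre subscheme $Z_t$ to lie on a plane curve of degree $d-e$ --- is not the right condition. As the paper's own recollection of Str{\o}mme's monad construction (Construction~\ref{cons:ex1}) makes clear, the type-$d$ bundle arising from a degeneration has its subscheme $Y$ equal to the zero locus of a section $\tau\in H^0\bigl(\bP²,\sF(d-c_1)\bigr)$ where $\sF$ is a rank-two bundle of type $e$, together with the auxiliary datum of a section $F\in H^0\bigl(\bP²,\sO(2d-c_1)\bigr)$ controlling the extension class via the squaring map $F^2\cdot(-)$. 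This is a very different incidence condition than ``lies on a curve of degree $d-e$,'' and your proposed ``count of conditions'' would not naturally reproduce the quadratic expression $\gamma(d;e)=P(d)-P(e)+1$ with $P(x)=(x-1)(x-2-c_1)-c_2$. Similarly, your interpretation of $\binom{d-e-1}{2}$ as counting subschemes on a degree-$(d-e-1)$ curve conflates that binomial (the arithmetic genus of such a curve) with a quantity that in Str{\o}mme's Theorem~4.7 controls whether the dimension of the space of the relevant pairs $(\sF,\tau)$ is large enough.

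In short, the overall flavor (Hartshorne--Serre plus a dimension count against $\dim M(d)$ from Fact~\ref{fact:exMd}) is in the right direction, but the specific incidence geometry and the codimension bookkeeping would have to be rebuilt around the monad construction and the extension-class twist $F^2\cdot(-)$ rather than around subschemes-on-curves. You candidly flag the bookkeeping as ``the main obstacle,'' and indeed it is precisely the content of the result.
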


\begin{obs}[Numerology]\label{obs:num}
  In the setting of Fact~\ref{fact:locDef}, elementary computations show that if
  $d \gg 0$ is sufficiently large, then $γ(d;e) > 0$ for all numbers $e$
  satisfying $d > e ≥ -1$.  In particular, for any such $e$, the locus $M(d;e)$
  of bundles that are deformable to type $e$ over an irreducible base is either
  empty, or a proper closed subset, $M(d;e) \subsetneq M(d)$.
\end{obs}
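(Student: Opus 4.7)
The plan is to analyze $γ(d;e)$ by viewing $P(x) = (x-1)(x-2-c_1) - c_2$ as a monic quadratic in $x$ and exploiting its asymptotics together with its monotonicity.  Since the leading coefficient of $P$ equals $+1$, we have $P(d) \to +\infty$ as $d \to \infty$.  Moreover $P'(x) = 2x - 3 - c_1$, and since the setting forces $c_1 \in \{0, -1\}$, this derivative is strictly positive for every integer $x \geq 2$, so $P$ is strictly increasing on the integers $\geq 2$.

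For the first assertion, I would split along the two branches in the definition of $γ(d;e)$.  In the first branch ($e = -1$ or $e = c_1 = c_2 = 0$) one has $γ(d;e) = P(d)$, which is positive as soon as $d$ exceeds both roots of $P$.  In the second branch $γ(d;e) = P(d) - P(e) + 1$, and I would further distinguish two subcases.  When $e \geq 2$, the monotonicity above, together with the assumption $d > e$, gives $P(d) > P(e)$, hence $γ(d;e) \geq 2$.  When $e \in \{-1, 0, 1\}$, only the three fixed quantities $P(-1), P(0), P(1)$ (depending on the fixed $c_1, c_2$) appear, so choosing $d$ large enough that $P(d) > \max\{P(-1), P(0), P(1)\}$ forces $γ(d;e) \geq 2$ in these cases as well.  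Taking $d$ above all of these thresholds simultaneously yields $γ(d;e) > 0$ for every integer $e$ with $-1 \leq e < d$, as claimed.

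For the second assertion, assume $γ(d;e) > 0$.  Fact~\ref{fact:locDef} then tells us that every irreducible component $M$ of $M(d;e) \subseteq M(d)$ satisfies $\codim_{M(d)} M \geq γ(d;e) \geq 1$.  Since Fact~\ref{fact:exMd} guarantees that $M(d)$ is irreducible whenever it is non-empty, no closed subset of positive codimension can equal $M(d)$; hence $M(d;e)$ is either empty or a proper closed subset $M(d;e) \subsetneq M(d)$.  The whole argument is a bookkeeping exercise in the behaviour of a fixed quadratic polynomial, so the only point requiring mild care is keeping the small exceptional range $e \in \{-1, 0, 1\}$ separate from the monotonicity argument for $e \geq 2$; there is no deeper obstacle.
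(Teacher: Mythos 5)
Your proof is correct.  The paper itself gives no written argument for Observation~\ref{obs:num} — it is stated as an observation and dismissed with ``elementary computations show'' — so you are filling a genuine gap rather than paralleling a printed proof.  The substantive issue, which you identify and handle correctly, is that the quantifier order requires a threshold on $d$ that is uniform over all $e$ with $-1 \leq e < d$; a naive ``$\gamma(d;e)$ has positive leading coefficient in $d$'' argument only gives a threshold for each fixed $e$.  Your case split (monotonicity of $P$ on integers $\geq 2$ to cover $e \geq 2$, plus a finite comparison for $e \in \{-1,0,1\}$, plus $P(d)>0$ for the first branch of $\gamma$) supplies that uniformity.  One minor remark: the case split can be avoided entirely by factoring $P(d)-P(e) = (d-e)\,(d+e-3-c_1)$; since in the second branch $e \geq 0$ and $c_1 \in \{0,-1\}$, one has $d+e-3-c_1 \geq d-3$, so $\gamma(d;e) \geq (d-e)(d-3)+1 \geq 2$ already for $d \geq 4$, uniformly in $e$, leaving only the first branch to require $P(d)>0$.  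But this is merely a cosmetic shortening; your argument is sound as written.
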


\begin{cor}[Non-emptyness of $M(d;e)$]\label{cor:myDRBI}
  Given numbers $c_1$ and $c_2$ as in Setting~\ref{setting:stroemme} and $e ≥
  -1$ such that $M(e)$ is not empty.  If $d \gg 0$ is any sufficiently large
  number, then $M(d;e) \subsetneq M(d)$ is a proper, non-empty subvariety.
\end{cor}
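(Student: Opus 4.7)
The plan is to combine the non-emptiness criterion at the end of Fact~\ref{fact:locDef} with the codimension estimate reformulated in Observation~\ref{obs:num}. Both ingredients hold for $d$ sufficiently large, and together they give exactly what is claimed.

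First I would extract from the assumption that $M(e) \neq \emptyset$ the numerical fact that $e^2 - ec_1 + c_2 \geq 0$; this is immediate from the dimension formula in Fact~\ref{fact:exMd}, since $M(e)$ is empty whenever this quantity is negative. The right-hand side is therefore a fixed non-negative integer depending only on $e$, $c_1$, $c_2$.

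Next I would observe that $\binom{d-e-1}{2}$ grows quadratically in $d$ with $e$ fixed, so for all sufficiently large $d$ we have
\[
\binom{d-e-1}{2} \;\geq\; e^2 - e\cdot c_1 + c_2.
\]
The last clause of Fact~\ref{fact:locDef} then guarantees that $M(d;e)$ contains an irreducible component, so in particular $M(d;e)$ is non-empty.

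Finally I would invoke Observation~\ref{obs:num}: for $d \gg 0$ we have $\gamma(d;e) > 0$, so every irreducible component of $M(d;e)$ has strictly positive codimension in $M(d)$. Hence $M(d;e) \subsetneq M(d)$ is a proper subvariety. Taking $d$ large enough that both conditions hold simultaneously yields the corollary. There is no real obstacle here; the statement is a direct synthesis of Fact~\ref{fact:locDef} (for non-emptiness) with Observation~\ref{obs:num} (for properness), applied once the given $e$ is fixed and $d$ is pushed to infinity.
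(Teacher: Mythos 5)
Your proof is correct and follows essentially the same route as the paper's. The paper's version simply packages the steps as a list of five auxiliary polynomials $Q_1,\dots,Q_5$ in $d$ (non-emptiness of $M(d)$, its dimension, the threshold in Fact~\ref{fact:locDef}, the component dimension, and $\gamma(d;e)$) and observes they are all eventually positive; you extract the two that matter — the threshold $\binom{d-e-1}{2}\geq e^2-ec_1+c_2$ giving non-emptiness of $M(d;e)$, and $\gamma(d;e)>0$ from Observation~\ref{obs:num} giving properness — and package the rest implicitly (non-emptiness of $M(d)$ follows from $M(d;e)\neq\emptyset$).
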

\begin{proof}
  Given $c_1$, $c_2$ and $e$, consider the polynomials $P(\cdot)$ and
  $γ(\cdot;e)$ as in Fact~\ref{fact:locDef}.  If $d \gg 0$ is sufficiently
  large, then any of the following polynomials in $d$, which all have positive
  leading coefficients, takes strictly positive values.
  \begin{align*}
    Q_1(d) & := d² - d\cdot c_1 + c_2 &
    Q_2(d) & := 3(d²- d\cdot c_1 + c_2) - 1 \\
    Q_3(d) & := \textstyle \binom{d-e-1}{2} - e²- e \cdot c_1 + c_2 &
    Q_4(d) & := Q_2(d)-γ(d;e) \\
    Q_5(d) & := γ(d;e)
  \end{align*}
  Fact~\ref{fact:exMd} asserts that $M(d)$ is non-empty as soon as $Q_1(d) ≥ 0$.
  The dimension of $M(d)$ is then given as $Q_2(d)$.  Fact~\ref{fact:locDef}
  claims that once $Q_3(d)$ is positive, the space $M(d;e) ⊆ M(d)$ contains a
  component $M$ whose dimension $\dim M$ is equal to $Q_4(d)$, and therefore
  again positive.  The minimal codimension in $M(d)$ of components of
  $M(d;e)$ is given by $Q_5(d)$, showing that $M(d;e) \ne M(d)$.
\end{proof}

\subsubsection{Deformability and non-deformability}

As a consequence of Observation~\ref{obs:num} we will see in
Proposition~\ref{prop:stroemmeX} that most vector bundles cannot be deformed
over an irreducible base to bundles of smaller type.  In striking contrast, we
will see in Proposition~\ref{prop:explicit1-x} that any two vector bundles whose
Chern classes are equal are deformable into each other, over a base that is not
necessarily irreducible.

\begin{prop}[Non-deformability over irreducible base]\label{prop:stroemmeX}
  Given numbers $c_1$ and $c_2$ as in Setting~\ref{setting:stroemme}.  If $d \gg
  0$ is sufficiently large, then there exists a rank-two vector bundle
  $\sE$ of type $d$ on $\bP²$ with Chern classes $c_1$ and $c_2$ that is not
  deformable to type $e$ over an irreducible base, for any $d > e ≥ -1$, in the
  sense of Definition~\ref{def:X}.
\end{prop}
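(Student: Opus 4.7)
The plan is to deduce this cleanly from Observation~\ref{obs:num} together with Fact~\ref{fact:locDef} and Fact~\ref{fact:exMd}, by a standard ``generic point in the complement of a finite union of proper closed subsets'' argument on the moduli space $M(d)$.

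First, I would choose $d$ to be sufficiently large so that both of the following hold simultaneously: (i) $d^2 - d c_1 + c_2 > 0$, which by Fact~\ref{fact:exMd} ensures that $M(d)$ is non-empty, irreducible, nonsingular, quasi-projective and of strictly positive dimension $3(d^2 - d c_1 + c_2) - 1$; and (ii) the inequality $\gamma(d;e) > 0$ holds for every integer $e$ with $-1 \leq e < d$, which is possible by Observation~\ref{obs:num}. Such a $d$ exists since (ii) is asserted in Observation~\ref{obs:num} and (i) can always be achieved by further enlarging~$d$.

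Next, I would fix such a $d$ and consider, for each $e \in \{-1, 0, 1, \ldots, d-1\}$, the locus $M(d;e) \subseteq M(d)$ of bundles deformable to type $e$ over an irreducible base, as defined in Fact~\ref{fact:locDef}. By Fact~\ref{fact:locDef} each irreducible component of $M(d;e)$ has codimension at least $\gamma(d;e)$ in $M(d)$, and by our choice of $d$ this codimension is strictly positive. Since $M(d)$ is irreducible, each $M(d;e)$ is therefore a proper closed subset of $M(d)$ (possibly empty). As the index set $\{-1, 0, \ldots, d-1\}$ is finite, the union
\[
Z := \bigcup_{-1 \leq e < d} M(d;e)
\]
is a finite union of proper closed subsets of the irreducible variety $M(d)$, hence itself a proper closed subset.

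Finally, since $k$ is algebraically closed and $M(d)$ is an irreducible quasi-projective variety of positive dimension, the open subset $M(d) \setminus Z$ contains a $k$-rational point. Any such point corresponds, via Fact~\ref{fact:exFam} and the moduli interpretation of $M(d)$, to an isomorphism class of rank-two vector bundles $\sE$ on $\bP^2$ of pure type $d$ with Chern classes $c_1$ and $c_2$. By the characterization of the $k$-rational points of $M(d;e)$ in Fact~\ref{fact:locDef}, such an $\sE$ is not deformable to type $e$ over an irreducible base for any $e$ with $-1 \leq e < d$, as required. The argument is essentially a bookkeeping exercise given the machinery of Section~\ref{ssec:5B1}; the only mild subtlety is ensuring that the single threshold $d$ works uniformly for all admissible $e$, which is precisely the content of Observation~\ref{obs:num}.
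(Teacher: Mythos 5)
Your argument is correct and is essentially the same as the paper's: the paper's proof simply invokes Observation~\ref{obs:num} to conclude that $M(d) \setminus \bigcup_{d > e \geq -1} M(d;e)$ is non-empty and picks a $k$-rational point there. You have merely unpacked why that complement is non-empty (finitely many proper closed subsets of an irreducible, positive-dimensional, non-empty $M(d)$); this is the same idea, spelled out in more detail.
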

\begin{proof}
  Recall from Observation~\ref{obs:num} that the open complement $M(d) \setminus
  \bigcup_{d > e ≥ -1} M(d;e)$ is not empty.  Choose a $k$-rational point in
  there and let $\sE$ be the corresponding bundle.
\end{proof}

\begin{prop}[Deformability over reducible base]\label{prop:explicit1-x}
  Given numbers $c_1$, $c_2$ as in Setting~\ref{setting:stroemme} and vector
  bundles $\sA$ and $\sB$ with Chern classes $c_1$, $c_2$.  Then $\sA$ and $\sB$
  are deformable into each other, over a base scheme that need not necessarily
  be irreducible.
\end{prop}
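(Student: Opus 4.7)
The plan is to assemble the required deformation as a chain of irreducible families glued along common bundle fibres. The irreducible pieces come from two sources: Strømme's parameter schemes $Q(d)$ of Fact~\ref{fact:exFam} and the irreducible specialisation families whose existence is asserted by Fact~\ref{fact:locDef} and Corollary~\ref{cor:myDRBI}.

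I would first dispose of the equal-type case: if $d(\sA) = d(\sB) = d$, then $\sA$ and $\sB$ appear as fibres of the family $\mathfrak{E}/Q(d)$ over closed points $q_A, q_B ∈ Q(d)$, and the smoothness and irreducibility of $Q(d)$ supply an irreducible curve through both points, whose pullback is an irreducible family realising the desired deformation.

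In the general case, the strategy is to reduce to equal-type via Strømme's specialisation theory. Set $d_A := d(\sA)$ and $d_B := d(\sB)$, and pick $d$ sufficiently large that both $M(d;d_A)$ and $M(d;d_B)$ are non-empty (Corollary~\ref{cor:myDRBI}). Each of these loci, by Definition~\ref{def:X}, contains a bundle of type $d$ admitting an irreducible deformation to \emph{some} bundle of type $d_A$, respectively $d_B$; call these pairs $(\sA', \sA'')$ and $(\sB', \sB'')$, with $\sA', \sB'$ of type $d$ and $\sA'', \sB''$ of types $d_A, d_B$. Applying the equal-type case inside $Q(d_A)$ I would connect $\sA$ to $\sA''$; similarly inside $Q(d_B)$ I would connect $\sB''$ to $\sB$; and inside $Q(d)$ I would connect $\sA'$ to $\sB'$. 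Together with the two specialisation families, this produces up to five irreducible families composing the chain of endpoints
\[
  \sA \;\leadsto\; \sA'' \;\leadsto\; \sA' \;\leadsto\; \sB' \;\leadsto\; \sB'' \;\leadsto\; \sB.
\]
Glueing these families along their common bundle fibres would produce a family of rank-two bundles on $\bP²$ parameterised by a reducible base with (up to) five irreducible components, containing both $\sA$ and $\sB$ as fibres.

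The main obstacle I foresee is that the specialisation families supplied by Fact~\ref{fact:locDef} land on \emph{arbitrary} type-$d_A$ and type-$d_B$ bundles, not on the prescribed $\sA$ and $\sB$. The equal-type case inside $Q(d_A)$ and $Q(d_B)$ is exactly the tool that bridges this gap, and the formal glueing of flat families along a single common fibre is unproblematic once an isomorphism of the bundles at that fibre has been chosen—such a choice exists by the very construction of the intermediate bundles $\sA''$, $\sA'$, $\sB'$, $\sB''$.
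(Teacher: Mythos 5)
Your proposal is correct and follows essentially the same strategy as the paper's proof: choose $d \gg 0$ so that $M(d;e_{\sA})$ and $M(d;e_{\sB})$ are both non-empty via Corollary~\ref{cor:myDRBI}, use the irreducibility and surjectivity of $Q(\cdot) \to M(\cdot)$ from Fact~\ref{fact:exFam} to connect bundles of equal splitting type, use the specialisation families from Definition~\ref{def:X}/Fact~\ref{fact:locDef} to pass from type $d$ down to types $e_{\sA}$ and $e_{\sB}$, and concatenate. The only cosmetic difference is that the paper routes the chain through an explicit intermediate bundle $\sC \in M(d)$ and leaves the ``equal-type'' connections implicit in the phrase ``can be deformed into $\sA$,'' whereas you isolate the equal-type step as a lemma and omit the middleman $\sC$; both yield the same reducible chain of irreducible families.
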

\begin{proof}
  Denote the splitting types of $\sA$ and $\sB$ by $e_{\sA}$ and $e_{\sB}$,
  respectively.  Corollary~\ref{cor:myDRBI} then gives a number $d \gg 0$ such
  that $M(d;e_{\sA})$ and $M(d;e_{\sB})$ are both non-empty.  Choose a bundle
  $\sC ∈ M(d)$.  By Fact~\ref{fact:exFam}, there exists a deformation family
  that connects the bundle $\sC$ to one in $M(d;e_{\sA})$.  By definition of
  $M(d;e_{\sA})$, this bundle can be deformed into one in $M(e_{\sA})$, which,
  by Fact~\ref{fact:exFam} again, can be deformed into $\sA$.  We have thus
  found a deformation over a reducible base that has $\sC$ and $\sA$ as fibers.
  In a similar manner, find a deformation that connects $\sC$ and $\sB$.
  Connect these deformation families to conclude.
\end{proof}

%
%

\section{Homotopy classification of \texorpdfstring{$PGL_2$}{PGL2}-torsors over \texorpdfstring{$\bP²$}{P\texttwosuperior}}
\label{sec:03}

In this section, we discuss the $\bA¹$-homotopy classification of (Nisnevich
locally trivial) $PGL_2$-torsors on $\bP²$.  In other words, we describe the
pointed set $[\bP²,BPGL_2]_{\bA¹}$.  To formulate a useful description of this
set, we observe that Nisnevich locally trivial $PGL_2$-torsors are always
obtained from $GL_2$-torsors by change of structure group.  We then investigate
the induced map
$$
[\bP²,BGL_2]_{\bA¹} \longrightarrow [\bP²,BPGL_2]_{\bA¹},
$$
show that this map is surjective, and describe the right hand side as a quotient
of the left hand side by the natural action of $\Pic(\bP²)$ coming from
``tensoring by line bundles''.  Using an explicit description of
$[\bP²,BGL_2]_{\bA¹}$ that stems from techniques of obstruction theory, we then
obtain a description of $[\bP²,BPGL_2]_{\bA¹}$.  The main results of this
section are Theorem~\ref{thm:pgl2torsors} and
Corollary~\ref{cor:pgl2torsorsonp2}.

\begin{conventions}
  In Sections~\ref{sec:03} and \ref{sec:04}, we deviate from our global
  conventions.  Fix an algebraically closed field $k$.  Contrary to our global
  assumptions fixed in Section~\ref{sec:conventions}, these sections use
  different assumptions on the characteristic of $k$; we will always be explicit
  about the primes we want to exclude.

  We write $\Sm_k$ for the category of schemes that are separated, finite type
  and smooth over $\Spec k$.  We write $\Spc_k$ for the category of simplicial
  Nisnevich sheaves of sets on $\Sm_k$, equipped with the $\bA¹$-local model structure
  of \cite{MV}.  In the rest of this section, the word ``sheaf" will by synonymous with ``Nisnevich sheaf of groups on $\Sm_k$."

  A presheaf $\mathbf{F}$ on $\Sm_k$ is called $\bA¹$-invariant if
  $\mathbf{F}(U) → \mathbf{F}(U ⨯ \bA¹)$ is a bijection for any $U ∈ \Sm_k$.  A
  sheaf of groups $\mathbf{G}$ is strongly $\bA¹$-invariant if its cohomology
  presheaves $H^i \bigl( \cdot,\, \mathbf{G} \bigr)$ are $\bA¹$-invariant, for
  $i ∈ \{ 0,1 \}$.  A sheaf of abelian groups $\mathbf{A}$ is called strictly
  $\bA¹$-invariant if all its cohomology presheaves are $\bA¹$-invariant.  By
  \cite[Thm.~1.9]{MField}, if $(\mathscr{X},x)$ is a pointed space, then
  $\bpi_i^{\bA¹}({\mathscr X},x)$ is strongly $\bA¹$-invariant for $i = 1$, and
  strictly $\bA¹$-invariant for $i > 1$.
\end{conventions}

\begin{rem}
  The classification results of Sections~\ref{sec:03} and \ref{sec:04} hold in
  greater generality: statements and proofs apply verbatim to the case where the
  base field $k$ is quadratically closed.
\end{rem}

\subsection{Torsors and classifying spaces in \texorpdfstring{$\bA¹$}{A\textonesuperior}-homotopy theory}
\label{ss:torsors}

If $\mathscr{G}$ is a sheaf of groups, we write $B\mathscr{G}$ for the
simplicial bar construction of the sheaf of groups $\mathscr{G}$,
\cite[§~4.1]{MV}.  By \cite[Prop.~4.1.15]{MV}, we know that (free) simplicial
homotopy classes of maps from a smooth scheme $X$ to $B\mathscr{G}$ are in
bijection with Nisnevich locally trivial $\mathscr{G}$-torsors on $X$.  Thus, if
$B\mathscr{G}^f$ is a simplicially fibrant model of $B\mathscr{G}$, then, given
a Nisnevich locally trivial $\mathscr{G}$-torsor $π: P → X$, we can pick a
morphism $f_{π}: X → B\mathscr{G}^f$ such that $π$ is the pullback of the
universal $\mathscr{G}$-torsor along $f_{π}$.

The space $B\mathscr{G}$ is a reduced simplicial sheaf (i.e., the sheaf of
$0$-simplices is reduced to a point) and is therefore simplicially
$0$-connected.  It follows from \cite[Cor.~2.3.22]{MV} that $B\mathscr{G}$ is
$\bA¹$-connected.  We write $\ast$ for the canonical base-point of
$B\mathscr{G}$.  If we write $X_+$ for $X$ with a disjoint base-point attached,
then ``forgetting the base-point" induces a bijection between the set of pointed
morphisms from $X_+$ to $B\mathscr{G}$ and the set of morphisms from $X$ to
$B\mathscr{G}$.  In particular, we can always assume that $f_{π}$ is represented
by a pointed morphism from $X_+$.

If $G$ is a linear algebraic group, then $G$ can be viewed as an étale sheaf of
groups, and we can consider the étale classifying space $B_{\et}G$; see
\cite[§~4.2]{MV} for the construction.  There is a canonical adjunction morphism
$BG → B_{\et}G$ that is a simplicial weak equivalence if and only if étale
locally trivial $G$-torsors are Nisnevich locally trivial.

If $G$ is a finite étale group scheme of order coprime to the characteristic of
$k$, then $B_{\et}G$ is $\bA¹$-local by \cite[Prop.~4.3.5]{MV}.  As a
consequence, if $X$ is a smooth scheme, then $[X,B_{\et}G]_{\bA¹}$ is in natural
bijection with the set of étale locally trivial $G$-torsors on $X$.  We define
$\mathcal{H}¹_{\et}(G)$ to be the Nisnevich sheafification of the presheaf $U
\mapsto [U,B_{\et}G]_{\bA¹}$.  If $G$ is abelian, the sheaf
$\mathcal{H}¹_{\et}(G)$ is a sheaf of abelian groups, and under the hypothesis
on $k$, is also strictly $\bA¹$-invariant.  The important fact, used below
without explicit reference, is that morphisms of strictly $\bA¹$-invariant
sheaves are determined by their sections over extensions of the base field.
This follows because such sheaves are unramified in the sense of
\cite[Def.~2.1]{MField}, cf.~\cite[Cor.~6.9 and Rem.~6.10]{MField}.

\subsection{Some \texorpdfstring{$\bA¹$}{A\textonesuperior}-homotopy theory of \texorpdfstring{$PGL_2$}{PGL2}}

In this section, we produce some $\bA¹$-fiber sequences related to $PGL_n$ and
$BPGL_n$.  We refer to \cite{WendtTorsor} for discussion of the general theory
of $\bA¹$-fiber sequences.

\begin{lem}\label{lem:pglnfibseq1}
  There is an $\bA¹$-fiber sequence of the form $PGL_n → B\gm → BGL_n$.
\end{lem}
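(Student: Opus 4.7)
The plan is to derive the fiber sequence from the short exact sequence of algebraic groups
$$
1 \to \gm \to GL_n \to PGL_n \to 1,
$$
in which $\gm$ is realized as the center of $GL_n$ (scalar matrices).

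First, I would observe that the quotient morphism $GL_n \to PGL_n$ is a Zariski-locally trivial $\gm$-torsor. This is classical: on the standard affine open cover of $PGL_n$ (where a fixed matrix entry is required to be non-vanishing) one writes down explicit sections by normalizing that entry to $1$. In particular, the displayed sequence is an exact sequence of Nisnevich sheaves of groups in which $GL_n \to PGL_n$ is a Nisnevich-local epimorphism, and the torsor $GL_n \to PGL_n$ is classified by a canonical morphism $PGL_n \to B\gm$.

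Second, I would feed this short exact sequence into the classifying-space machinery of \cite[\S4.1]{MV}, which produces a simplicial fiber sequence
$$
B\gm \longrightarrow BGL_n \longrightarrow BPGL_n,
$$
and which can be extended one step to the left via $\Omega BPGL_n \simeq PGL_n$ to yield a simplicial fiber sequence
$$
PGL_n \longrightarrow B\gm \longrightarrow BGL_n,
$$
where the first map is the classifying map of the $\gm$-torsor $GL_n \to PGL_n$ identified above, and the second map is induced by the inclusion $\gm \hookrightarrow GL_n$.

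Third, and this is the main technical point, I would promote this simplicial fiber sequence to an $\bA^1$-fiber sequence by appealing to the analysis of torsors and $\bA^1$-fiber sequences in \cite{WendtTorsor}. The hypotheses to verify are (i) that $GL_n \to PGL_n$ is a Nisnevich-locally trivial $\gm$-torsor (done above), and (ii) that $\gm$ is strongly $\bA^1$-invariant. The latter holds because $H^0(\cdot, \gm) = \sO^\ast$ and $H^1(\cdot, \gm) = \Pic$ are both $\bA^1$-invariant functors on smooth schemes, so that $B\gm$ is $\bA^1$-local. Under these hypotheses the criteria of \cite{WendtTorsor} apply and the simplicial fiber sequence above remains a fiber sequence after $\bA^1$-localization.

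The main obstacle is the last step: simplicial fiber sequences of classifying spaces need not remain fiber sequences after $\bA^1$-localization, so one genuinely needs to invoke the geometric $\gm$-torsor structure on $GL_n \to PGL_n$ together with the good $\bA^1$-homotopical behaviour of $\gm$. Steps one and two, by contrast, are essentially formal consequences of the short exact sequence of sheaves of groups.
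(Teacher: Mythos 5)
Your Steps 1 and 2 are fine: the short exact sequence $1 \to \gm \to GL_n \to PGL_n \to 1$ of Nisnevich sheaves of groups (with $GL_n \to PGL_n$ Zariski-locally split) does yield the simplicial fiber sequence $PGL_n \to B\gm \to BGL_n$ by the classifying-space machinery plus a loop-space extension. This much is essentially formal and agrees with what the paper also implicitly uses.

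The gap is in Step 3. You invoke \cite{WendtTorsor} with the hypotheses ``(i) $GL_n \to PGL_n$ is a Nisnevich-locally trivial $\gm$-torsor'' and ``(ii) $\gm$ is strongly $\bA^1$-invariant,'' but these do not plug into the theorems being cited. What Wendt's results (Prop.~5.1 and Thm.~5.3 of \cite{WendtTorsor}, as used throughout this section) deliver is that for a Nisnevich-locally trivial $G$-torsor $P \to X$ and a $G$-space $F$, the projection of the associated fiber space $P \times^G F \to X$ with fiber $F$ is an $\bA^1$-fiber sequence. Applying this to your $\gm$-torsor $GL_n \to PGL_n$ produces the sequences $\gm \to GL_n \to PGL_n$ and $GL_n \to PGL_n \to B\gm$, but not the one further to the right that you want. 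Passing from $GL_n \to PGL_n \to B\gm$ to $PGL_n \to B\gm \to BGL_n$ is a right-rotation of the Puppe sequence, and in the $\bA^1$-local world such rotations are exactly what can fail; the simplicial fiber sequence you built in Step 2 does not automatically survive $\bA^1$-localization just because its fiber and total space are nice.

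The paper closes this gap by a different identification of the map $B\gm \to BGL_n$: it forms the associated fiber space $EGL_n \times^{GL_n} (GL_n/\gm)$ of the universal $GL_n$-torsor $EGL_n \to BGL_n$ with the translation action on $GL_n/\gm \cong PGL_n$. The projection $EGL_n \times^{GL_n} PGL_n \to BGL_n$ then is the projection of an associated fiber space of a $GL_n$-torsor, so Wendt's theorems apply directly and give an $\bA^1$-fiber sequence with fiber $PGL_n$. Separately, one checks (as in \cite[Lem.~3.8]{AsokPi1}) that $EGL_n \times^{GL_n} (GL_n/\gm)$ is simplicially weakly equivalent to $B\gm$. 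That identification is the missing ingredient in your argument: without it, you have a simplicial fiber sequence with no geometric torsor structure on the relevant map $B\gm \to BGL_n$, and hence no handle to promote it to an $\bA^1$-fiber sequence.
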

\begin{proof}
  Write $EGL_n$ for the {\v C}ech simplicial object associated with the
  structure map $GL_n → \Spec k$.  The inclusion of the center $\gm
  \hookrightarrow GL_n$ yields an isomorphism $GL_n/\gm \isomt PGL_n$ and there
  is a natural left translation action of $GL_n$ on $GL_n/\gm$.  Consider the
  associated fiber space $EGL_n ⨯^{GL_n} GL_n/\gm$.  Projection onto the first
  factor gives a morphism $EGL_n ⨯^{GL_n} GL_n/\gm → BGL_n$ that, as the
  associated fiber space of a $GL_n$-torsor, is automatically an $\bA¹$-fiber
  sequence by \cite[Prop.~5.1 and Thm.~5.3]{WendtTorsor}.  On the other hand it
  is straightforward to show that $EGL_n ⨯^{GL_n} GL_n/\gm$ is simplicially
  weakly equivalent to $B\gm$.  This is established in exactly the same fashion
  as the proof of \cite[Lem.~3.8]{AsokPi1}.
\end{proof}

The map $EGL_n ⨯^{GL_n} GL_n/\gm → BGL_n$ in the proof of
Lemma~\ref{lem:pglnfibseq1} is, as the associated fiber space of a
$GL_n$-torsor, Nisnevich locally trivial; under the identification $GL_n/\gm
\cong PGL_n$ this map is furthermore a $PGL_n$-torsor.  As a consequence, there
exists a classifying morphism $BGL_n → BPGL_n$ for this map.  The next result
then follows from \cite[Prop.~5.1 and Thm.~5.3]{WendtTorsor}.

\begin{lem}\label{lem:gmglnpglnfibersequence}
  There is an $\bA¹$-fiber sequence of the form $B\gm → BGL_n → BPGL_n$.  \qed
\end{lem}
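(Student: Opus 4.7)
The plan is to apply exactly the same machinery used in the proof of Lemma~\ref{lem:pglnfibseq1}, because the geometric input has essentially already been assembled. The discussion immediately preceding the statement observes that the morphism
$$
EGL_n ⨯^{GL_n} GL_n/\gm \longrightarrow BGL_n
$$
is a Nisnevich locally trivial $PGL_n$-torsor (it is the associated fiber space of the $GL_n$-torsor $EGL_n → BGL_n$ with fiber $GL_n/\gm \cong PGL_n$, which inherits a canonical right $PGL_n$-action) and identifies $BGL_n → BPGL_n$ as its classifying morphism.

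The core step is then to feed this torsor into \cite[Prop.~5.1 and Thm.~5.3]{WendtTorsor}, which guarantees that for any Nisnevich locally trivial $G$-torsor $E → X$ with classifying morphism $X → BG$, the sequence $E → X → BG$ is an $\bA^1$-fiber sequence. Applying this verbatim yields that
$$
EGL_n ⨯^{GL_n} GL_n/\gm \longrightarrow BGL_n \longrightarrow BPGL_n
$$
is an $\bA^1$-fiber sequence.

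To conclude, I would substitute the identification of the total space established inside the proof of Lemma~\ref{lem:pglnfibseq1}, namely a simplicial weak equivalence $EGL_n ⨯^{GL_n} GL_n/\gm \simeq B\gm$; since simplicial weak equivalences are in particular $\bA^1$-weak equivalences, the fiber sequence above rewrites as $B\gm → BGL_n → BPGL_n$. There is no genuine obstacle to overcome: the lemma is really a formal consequence of Lemma~\ref{lem:pglnfibseq1} and the torsor-to-fiber-sequence translation of \cite{WendtTorsor}, and the only mild point to check is that the classifying map produced from the $PGL_n$-torsor is indeed the morphism called $BGL_n → BPGL_n$ in the statement, which holds by construction.
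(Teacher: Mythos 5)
Your proposal matches the paper's argument precisely: the paper likewise observes that $EGL_n ⨯^{GL_n} GL_n/\gm → BGL_n$ is a Nisnevich locally trivial $PGL_n$-torsor classified by $BGL_n → BPGL_n$, invokes \cite[Prop.~5.1 and Thm.~5.3]{WendtTorsor} to obtain the $\bA^1$-fiber sequence, and then substitutes the identification of the total space with $B\gm$ established in the proof of Lemma~\ref{lem:pglnfibseq1}. No differences worth noting.
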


The following result is essentially contained in \cite[Cor.~3.17]{AsokPi1} and
\cite[Props.~5.11, 5.12]{chev-rep}, though the formulation and proof below are
somewhat different.

\begin{prop}\label{prop:homotopysheavesofpgl2}
  Let $n≥ 2$ be a natural number, and assume that the base field $k$ has
  characteristic that does not divide $n$.  Then there is a canonical
  isomorphism
  $$
  \bpi_1^{\bA¹}(BPGL_n,\ast) \isomto \mathcal{H}¹_{\et}(μ_n)
  $$
  and a short exact sequence of strictly $\bA¹$-invariant sheaves of the form
  $$
  0 \longrightarrow \bpi_2^{\bA¹}(BGL_n) \longrightarrow \bpi_2^{\bA¹}(BPGL_n)
  \longrightarrow μ_n \longrightarrow 0
  $$
  with $\bpi_2^{\bA¹}(BGL_2)\cong \K^{MW}_2$ and $\bpi_2^{\bA¹}(BGL_n)\cong
  \K^M_2$ for $n≥ 3$.
\end{prop}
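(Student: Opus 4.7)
The plan is to extract the long exact sequence of $\bA^1$-homotopy sheaves associated with the $\bA^1$-fiber sequence $B\gm \to BGL_n \to BPGL_n$ from Lemma~\ref{lem:gmglnpglnfibersequence}. Since $\gm$ is strictly $\bA^1$-invariant on $\Sm_k$, the simplicial classifying space $B\gm$ is an Eilenberg--MacLane space $K(\gm,1)$ in the $\bA^1$-homotopy category, so $\bpi_1^{\bA^1}(B\gm) \cong \gm$ and $\bpi_i^{\bA^1}(B\gm) = 0$ for $i \geq 2$. Combined with the determinant-induced identification $\bpi_1^{\bA^1}(BGL_n) \cong \gm$, the long exact sequence collapses to
$$
0 \longrightarrow \bpi_2^{\bA^1}(BGL_n) \longrightarrow \bpi_2^{\bA^1}(BPGL_n) \longrightarrow \gm \xrightarrow{f} \gm \longrightarrow \bpi_1^{\bA^1}(BPGL_n) \longrightarrow 0.
$$

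Next, I would identify $f$. The connecting morphism is induced by the central inclusion $\gm \hookrightarrow GL_n$, $z \mapsto z\cdot\Id$; post-composing with the determinant yields $z \mapsto z^n$, so $f$ is multiplication by $n$. The sheaf-theoretic kernel of this map is $\mu_n$, immediately giving the claimed short exact sequence for $\bpi_2^{\bA^1}(BPGL_n)$. For the cokernel, Kummer theory (valid since $\operatorname{char}(k) \nmid n$) yields a functorial exact sequence
$$
0 \to \gm(U)/\gm(U)^n \to H^1_{\et}(U,\mu_n) \to \Pic(U)[n] \to 0.
$$
On Nisnevich stalks, Picard groups of henselian local rings vanish and this sequence becomes an isomorphism. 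Nisnevich sheafifying, the cokernel of $f$ coincides with $\mathcal{H}^1_{\et}(\mu_n)$, which establishes the first isomorphism.

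The final part reduces to Morel's computation of $\bpi_1^{\bA^1}(SL_n)$. Using the fibration $SL_n \to GL_n \to \gm$ (split by the inclusion of the diagonal torus) together with the fact that $\gm$ is $\bA^1$-rigid, so $\bpi_i^{\bA^1}(\gm) = 0$ for $i \geq 1$, the associated long exact sequence gives $\bpi_2^{\bA^1}(BGL_n) \cong \bpi_1^{\bA^1}(GL_n) \cong \bpi_1^{\bA^1}(SL_n)$, and the latter is $\K^{MW}_2$ for $n=2$ and $\K^M_2$ for $n \geq 3$ by the main results of \cite{MField}. The main obstacle I anticipate is the careful identification of the boundary map $f$ as multiplication by $n$ at the level of $\bA^1$-homotopy sheaves, together with the clean passage between the Nisnevich sheaf cokernel of $\gm \xrightarrow{n} \gm$ and the étale-cohomologically defined sheaf $\mathcal{H}^1_{\et}(\mu_n)$; both steps are essentially formal given the tools already in place but require careful bookkeeping between the Nisnevich and étale topologies.
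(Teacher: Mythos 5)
Your proposal follows essentially the same route as the paper's proof: extract the long exact sequence in $\bA^1$-homotopy sheaves from the fiber sequence $B\gm \to BGL_n \to BPGL_n$, identify the connecting map $\gm \to \gm$ as the $n$-th power map via the central inclusion composed with the determinant, and use the Kummer sequence (with vanishing of Picard groups on Nisnevich stalks) to identify the cokernel with $\mathcal{H}^1_{\et}(\mu_n)$. Your added sketch reducing $\bpi_2^{\bA^1}(BGL_n)$ to $\bpi_1^{\bA^1}(SL_n)$ via the split fibration $SL_n \to GL_n \to \gm$ is a correct elaboration of a fact the paper simply quotes from Morel.
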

\begin{proof}
  The $\bA¹$-fiber sequence
  \begin{equation}\label{eq:ght5}
    B\gm → BGL_n → BPGL_n
  \end{equation}
  induces a long exact sequence in $\bA¹$-homotopy sheaves
  \cite[Lem.~2.10]{AsokPi1}.  There is a canonical isomorphism
  $\bpi_1^{\bA¹}(BGL_n) \isomto \gm$ induced by the determinant homomorphism.
  As described in the proof of Lemma~\ref{lem:pglnfibseq1}, the map $B\gm →
  BGL_n$ in the above $\bA¹$-fiber sequence is induced by the inclusion of the
  center $\gm → GL_n$.  If $t$ is a coordinate on $\gm$, then the composite map
  $\gm → GL_n → \gm$, where the second homomorphism is induced by the
  determinant, is given by $t \mapsto t^n$.  In particular, the map $\gm \cong
  \bpi_1^{\bA¹}(B\gm) → \bpi_1^{\bA¹}(BGL_n) \cong \gm$ in the long exact
  sequence is precisely the map $t \mapsto t^n$.  It follows that
  $\bpi_1^{\bA¹}(BPGL_n)$ is isomorphic to the Nisnevich sheaf quotient
  $\factor{\gm}{\mathbf{G}_m^n}$.

  The Kummer sequence of étale sheaves $μ_n → \gm → \gm$ yields an exact
  sequence of cohomology presheaves
  $$
  \gm(\cdot) \stackrel{⨯ n}{\longrightarrow} \gm(\cdot) \longrightarrow
  H¹_{\et}(\cdot,μ_n) \longrightarrow H¹_{\et}(\cdot,\gm).
  $$
  Sheafifying this sequence of presheaves for the Nisnevich topology on $\Sm_k$,
  and observing that the Nisnevich sheafification of $H¹_{\et}(\cdot,\gm)$ is
  trivial, yields a canonical isomorphism of Nisnevich sheaves
  $\factor{\gm}{\mathbf{G}_m^n} →
  \mathcal{H}¹_{\et}(μ_n)$.  Combining these facts and observing that $B\gm$ is
  $\bA¹$-connected yields the first isomorphism.

  The kernel of the map $\bpi_1^{\bA¹}(B\gm) → \bpi_1^{\bA¹}(BGL_n)$ is $μ_n$.
  Since $\bpi_2^{\bA¹}(BGL_2) \cong \K^{MW}_2$ and $\bpi_i^{\bA¹}(B\gm) = 0$ for
  $i ≥ 2$, the second result also follows from the long exact sequence
  associated with \eqref{eq:ght5} above.
\end{proof}

\begin{rem}
  One can show that $\bpi_2^{\bA¹}(BPGL_2)$ is the pullback of the diagram
  $\bpi_1^{\bA¹}(\bP¹) → \gm \hookleftarrow μ_2$, in particular a subgroup sheaf
  of $\bpi_1^{\bA¹}(\bP¹)$.  The group structure on the above extension is
  inherited from this inclusion.
\end{rem}

\subsection{$PGL_n$-torsors vs.\ $GL_n$-torsors}

If $X$ is a smooth scheme, then there is a function
\[
[X,BPGL_n]_s \longrightarrow [X,BPGL_n]_{\bA¹}
\]
induced by the map sending $BPGL_n$ to its $\bA¹$-localization.  In general,
there is no reason for this function to be surjective, as $\bA¹$-homotopy
classes of maps with source $X$ and target $BPGL_n$ need not come from an actual
$PGL_n$-torsor on $X$.  The next result is a partial replacement for this
deficiency.

\begin{prop}\label{prop:representingpglntorsors}
  Let $n ≥ 2$ be a natural number, and assume that the base field $k$ has
  characteristic that does not divide $n$.  For $X$ a smooth $k$-scheme, the
  canonical map
  \[
  [X,BGL_n]_{\bA¹} → [X,BPGL_n]_{\bA¹}
  \]
  is surjective.  Moreover, given any element $\zeta ∈ [X,BPGL_n]_{\bA¹}$ and
  any smooth affine scheme $Y$ that is $\bA¹$-weakly equivalent to $X$, then
  there exists a vector bundle $\sE$ on $Y$ such that the map $\zeta$ is
  $\bA¹$-homotopic to the classifying map of the $PGL_n$-torsor associated with
  $\sE$.
\end{prop}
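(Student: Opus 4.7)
The plan is to prove the surjectivity claim first, and then deduce the realization statement from it by transferring the problem to the smooth affine $Y$ and invoking Morel's representability theorem for vector bundles.

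For surjectivity, the main tool is the $\bA¹$-fiber sequence $B\gm → BGL_n → BPGL_n$ from Lemma~\ref{lem:gmglnpglnfibersequence}. Since $\gm$ is a strictly $\bA¹$-invariant abelian sheaf, its classifying space $B\gm \simeq K(\gm,1)$ is an abelian group object in the pointed $\bA¹$-homotopy category, so the fiber sequence can be extended one step to the right to a principal fibration $BGL_n → BPGL_n → K(\gm,2)$. Applying $[X,-]_{\bA¹}$ then yields an exact sequence of pointed sets
$$
[X,BGL_n]_{\bA¹} \longrightarrow [X,BPGL_n]_{\bA¹} \longrightarrow [X,K(\gm,2)]_{\bA¹},
$$
and strict $\bA¹$-invariance of $\gm$ identifies the right-hand term with $H²_{\Nis}(X,\gm)$. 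Surjectivity therefore reduces to showing $H²_{\Nis}(X,\gm)=0$ for smooth $X$; this follows from the flasque resolution of $\gm$ by the (constant, hence acyclic) sheaf of invertible rational functions and a direct sum of skyscraper divisor sheaves along irreducible divisors, both of which are Nisnevich-acyclic on a smooth scheme.

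For the realization claim, given $\zeta ∈ [X,BPGL_n]_{\bA¹}$ and a smooth affine $Y \aoequiv X$, the $\bA¹$-weak equivalence induces a bijection $[X,BPGL_n]_{\bA¹} \cong [Y,BPGL_n]_{\bA¹}$ that carries $\zeta$ to some $\zeta'$. Applying surjectivity to $Y$ lifts $\zeta'$ to an element of $[Y,BGL_n]_{\bA¹}$, and Morel's representability theorem for $BGL_n$ on smooth affine schemes identifies this set with isomorphism classes of rank-$n$ vector bundles on $Y$. The resulting vector bundle $\sE$ has the property that its associated $PGL_n$-torsor classifies $\zeta'$, which corresponds to $\zeta$ under the weak equivalence.

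The main obstacle is the vanishing of $H²_{\Nis}(X,\gm)$ for arbitrary smooth $X$: the Zariski analogue is standard via Weil divisors, but the Nisnevich version requires extra care concerning the acyclicity of the constituent sheaves on the Nisnevich site. A cleaner alternative would be to argue directly that any Nisnevich-locally-trivial $PGL_n$-torsor arises as $PGL(\sE)$ for some vector bundle $\sE$, in which case the associated Brauer obstruction on $X$ is trivial by construction, so the lifting problem is solvable without passing through an abstract cohomology vanishing.
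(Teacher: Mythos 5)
Your proposal follows essentially the same architecture as the paper's proof — use the $\bA¹$-fiber sequence $B\gm → BGL_n → BPGL_n$, locate the lifting obstruction in $H²_{\Nis}(X,\gm)$, show it vanishes, and then combine a Jouanolou device with Morel's representability theorem \cite[Thm.~8.1.3]{MField} for the realization clause — but two steps need more care than your write-up gives them.

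First, you extend the fiber sequence one step to the right ``because $B\gm \simeq K(\gm,1)$ is an abelian group object.'' That the fiber is abelian does not by itself make the fibration principal; the relevant fact is that $\gm$ is \emph{central} in $GL_n$, which is what produces the delooping $BPGL_n → K(\gm,2)$ and, more to the point, guarantees the obstruction is \emph{untwisted}. The paper instead runs the Moore--Postnikov factorization of $BGL_n → BPGL_n$ and explicitly checks that the action of $\bpi_1^{\bA¹}(BPGL_n) \cong \mathcal{H}¹_{\et}(μ_n)$ on $\bpi_*^{\bA¹}(B\gm)$ is trivial (by observing that any morphism of strictly $\bA¹$-invariant sheaves $\mathcal{H}¹_{\et}(μ_n) → \bZ/2$ is trivial, since sections over algebraically closed extensions vanish). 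This extra check is not pedantry: in the $\bA¹$-setting one cannot silently assume the topological intuition about principal fibrations carries over, and the paper's care here is precisely what justifies placing the obstruction in the \emph{untwisted} group $H²_{\Nis}(X,\gm)$.

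Second, your vanishing argument for $H²_{\Nis}(X,\gm)$ proposes a flasque resolution by rational functions and skyscraper sheaves directly on the Nisnevich site. As you yourself flag, the Nisnevich acyclicity of those terms is exactly the delicate point, and one should not expect a Zariski-flasque resolution to be Nisnevich-acyclic for free. The paper's route is cleaner: strict $\bA¹$-invariance of $\gm$ gives $H²_{\Nis}(X,\gm) \cong H²_{\Zar}(X,\gm)$, and the latter is computed by the Cousin/Gersten resolution for $\K^M_1$, as in \cite[Props.~5.6--5.8]{quillen}. So the machinery you are reaching for is really the Gersten resolution, and invoking strict $\bA¹$-invariance is the correct way to deploy it.

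Finally, your ``cleaner alternative'' at the end — arguing that every Nisnevich-locally-trivial $PGL_n$-torsor on $X$ is $PGL(\sE)$ — does not prove the proposition. The whole point, as the paper emphasizes just before the statement, is that elements of $[X,BPGL_n]_{\bA¹}$ need \emph{not} come from actual torsors on $X$ when $X$ is not affine. So the proposition is genuinely about lifting $\bA¹$-homotopy classes, and cannot be reduced to a statement about torsors on $X$ itself; this is exactly why the Jouanolou device appears in the realization clause.
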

\begin{proof}
  We consider the Moore-Postnikov factorization of the map $BGL_n → BPGL_n$.
  For details regarding the Moore-Postnikov factorization in $\bA¹$-homotopy
  theory, we refer the reader to \cite[Thm.~6.1.1]{AsokFaselA3minus0}.  Roughly
  speaking, this factorization corresponds to looking at the Postnikov tower of
  the $\bA¹$-homotopy fiber of $BGL_n → BPGL_n$, which we identified above with
  $B\gm$.  There is a canonical action of $\bpi_1^{\bA¹}(BPGL_n)$ on the
  $\bA¹$-homotopy fiber of $BGL_n → BPGL_n$ induced by change of base-point.
  This yields an action of $\mathcal{H}¹_{\et}(μ_n)$ on $\bpi_i^{\bA¹}(B\gm)$
  and the latter is only non-trivial if $i = 1$, in which case it is isomorphic
  to $\gm$.

  The sheaf of automorphisms of $\gm$ is isomorphic to the constant sheaf
  $\bZ/2$, which is, in particular, strictly $\bA¹$-invariant.  The action of
  $\mathcal{H}¹_{\et}(μ_n)$ on the homotopy sheaves of $B\gm$ is determined by a
  homomorphism of sheaves $\mathcal{H}¹_{\et}(μ_n) → \bZ/2$.  The source and
  target sheaves here are strictly $\bA¹$-invariant and consequently such a
  homomorphism is uniquely determined by its behavior on sections over finitely
  generated extensions of the base-field.  Since $\bZ/2$ is a constant sheaf, to
  determine the value of such a homomorphism over a finitely generated extension
  $L$ of the base field, we can pass to an algebraic closure $\bar{L}$ of $L$.
  In that case, the sections of $\mathcal{H}¹_{\et}(μ_n)$ are necessarily
  trivial, so we conclude that any morphism of sheaves $\mathcal{H}¹_{\et}(μ_n)
  → \bZ/2$ is trivial.

  It follows that there is precisely one obstruction to lifting an
  $\bA¹$-homotopy class of maps $X → BPGL_n$ to an $\bA¹$-homotopy class of maps
  $X → BGL_n$, and that obstruction lies in the group $H²_{\Nis} \bigl(X,\, \gm
  \bigr)$.  Note that we have an untwisted obstruction in this case because the
  action of $\bpi_1^{\bA¹}(BPGL_n)$ on $\gm$ is trivial.  We refer the reader to
  \cite[§~6.1]{AsokFaselA3minus0} for more details on these twisted
  obstructions.

  We now claim that the group $H²_{\Nis} \bigl(X,\, \gm \bigr)$ vanishes for any
  smooth scheme.  Indeed, since the sheaf $\gm$ is strictly $\bA¹$-invariant, we
  know that $H²_{\Nis} \bigl(X,\, \gm \bigr) \cong H²_{\Zar} \bigl(X,\, \gm
  \bigr)$ and the latter cohomology can be computed by means of the Cousin
  resolution for $\gm = \K^M_1$.  This fact is standard, but it is difficult to
  find an explicit reference.  In lieu of a reference of this precise fact, we
  refer the reader to \cite[Props.~5.6--5.8]{quillen} where much more general
  results are established.

  Since $X$ is smooth, the Jouanolou-Thomason homotopy lemma asserts that there
  exists a smooth affine scheme $Y$ that is $\bA¹$-weakly equivalent to $X$,
  \cite[Prop.~4.4]{WeibelHAK}.  Thus, there is a bijection between isomorphism
  classes of rank-$n$ vector bundles on $Y$ and (free) $\bA¹$-homotopy classes
  of maps $[X,BGL_n]$ by \cite[Thm.~8.1.3]{MField}.  In particular, the lift
  constructed in the previous paragraph is represented by a vector bundle on
  $Y$.  It is straightforward to check that the $PGL_n$-torsor associated with
  this vector bundle has the properties mentioned in the statement.
\end{proof}

\subsection{\texorpdfstring{$\bA¹$}{A\textonesuperior}-homotopy classification of $PGL_2$-torsors on \texorpdfstring{$\bP²$}{P\texttwosuperior}}

If $X$ is any smooth variety, then mapping $X_+$ into the $\bA¹$-fiber sequence
$B\gm → BGL_n → BPGL_n$ and using \cite[Prop.~4.3.8]{MV} to identify $[X,B\gm]
\cong \Pic(X)$ yields an exact sequence of groups and pointed sets of the form
\[
[X,PGL_n]_{\bA¹} \longrightarrow \Pic(X) \longrightarrow [X,BGL_n]_{\bA¹} \longrightarrow [X,BPGL_n]_{\bA¹}.
\]
The action of $\Pic(X)$ on $[X,BGL_n]_{\bA¹}$ admits the following description.
While $[X,BGL_n]_{\bA¹}$ need not be in bijection with the set of isomorphism
classes of vector bundles on $X$ if $X$ is not affine, we can always find a
smooth affine scheme $X'$ and an $\bA¹$-weak equivalence $X' → X$.  In that
case, for any space $\mathscr{Y}$, the induced map $[X,\mathscr{Y}]_{\bA¹} →
[X',\mathscr{Y}]_{\bA¹}$ is a bijection.  Thus, we obtain an exact sequence as
above with $X$ replaced by $X'$ throughout.

In that case, we identify $\Pic(X')$ with the set of isomorphism classes of line
bundles on $X$, $[X',BGL_n]_{\bA¹}$ with the set of isomorphism classes of rank
$n$ vector bundles on $X'$ and describe the action of $\Pic(X')$ on
$[X',BGL_n]_{\bA¹}$ as follows,
$$
\Pic(X') ⨯ [X',BGL_n]_{\bA¹} → [X',BGL_n]_{\bA¹} \qquad (\sL, \sE) \mapsto \sL
\otimes \sE.
$$
With these identifications, the next result follows from
Proposition~\ref{prop:representingpglntorsors}.

\begin{prop}\label{prop:homotopyclassificationpart1}
  Let $n ≥ 2$ be a natural number, and assume that the base field $k$ has
  characteristic that does not divide $n$.  For $X$ a smooth $k$-scheme, there
  is a canonical bijection
  \[
  \factor{[X,BGL_n]_{\bA¹}}{\Pic(X)} \isomto [X,BPGL_n]_{\bA¹}, \eqno\qed
  \]
  where the action of $\Pic(X)$ on $[X,BGL_n]_{\bA¹}$ is, up to $\bA¹$-weak
  equivalence described in the preceding paragraph.
\end{prop}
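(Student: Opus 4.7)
The plan is to reduce to the affine case and then use the $\bA^1$-fiber sequence of Lemma~\ref{lem:gmglnpglnfibersequence} to identify fibers of $[X,BGL_n]_{\bA^1} \to [X,BPGL_n]_{\bA^1}$ with $\Pic(X)$-orbits.

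First, by the Jouanolou--Thomason homotopy lemma \cite[Prop.~4.4]{WeibelHAK}, choose a smooth affine scheme $X'$ together with an $\bA^1$-weak equivalence $X' \to X$. Since the mapping sets $[-,BGL_n]_{\bA^1}$, $[-,BPGL_n]_{\bA^1}$, and $\Pic(-) = [-,B\gm]_{\bA^1}$ are $\bA^1$-homotopy invariant, it suffices to establish the bijection with $X$ replaced by $X'$. Over $X'$ the set $[X',BGL_n]_{\bA^1}$ coincides with the set of isomorphism classes of rank-$n$ vector bundles by \cite[Thm.~8.1.3]{MField}, and $\Pic(X')$ acts on it by tensor product, as recalled in the paragraph preceding the statement.

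Next, I would apply $[X'_+,-]_{\bA^1}$ to the $\bA^1$-fiber sequence $B\gm \to BGL_n \to BPGL_n$. Since $B\gm$ is grouplike, the standard fiber-sequence formalism (cf.\ \cite[Prop.~5.1 and Thm.~5.3]{WendtTorsor}) yields not only an exact sequence of pointed sets, but in fact an action of $[X',B\gm]_{\bA^1} = \Pic(X')$ on $[X',BGL_n]_{\bA^1}$ whose orbits are exactly the non-empty fibers of the map to $[X',BPGL_n]_{\bA^1}$. Combined with surjectivity of this map, which is the content of Proposition~\ref{prop:representingpglntorsors}, this already produces a bijection between $[X',BGL_n]_{\bA^1}/\Pic(X')$ and $[X',BPGL_n]_{\bA^1}$.

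The main obstacle is verifying that this abstract fiber-sequence action agrees with the concrete tensor product action $(\sL,\sE) \mapsto \sL \otimes \sE$. I would do this by unwinding the construction of the fiber sequence: the map $B\gm \to BGL_n$ of Lemma~\ref{lem:pglnfibseq1} is induced by the central inclusion $\gm \hookrightarrow GL_n$, so on a trivializing Nisnevich cover $\{U_i\}$ of $X'$ the fiber-sequence action sends a rank-$n$ bundle with $GL_n$-cocycle $g_{ij}$ and a line bundle with $\gm$-cocycle $\lambda_{ij}$ to the bundle with cocycle $\lambda_{ij} \cdot g_{ij}$, which represents precisely $\sL \otimes \sE$. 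Both actions are natural in $X'$ and compatible with the $H$-space structure of $B\gm$, so this cocycle-level check suffices to identify them. With that identification in hand, the bijection of the proposition follows.
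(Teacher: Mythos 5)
Your proposal matches the paper's argument in both structure and substance: both reduce to an affine Jouanolou device, invoke the fiber sequence $B\gm \to BGL_n \to BPGL_n$ to obtain a $\Pic$-action whose orbits are the fibers of the map to $[X,BPGL_n]_{\bA^1}$, identify that action with tensor product by a line bundle, and deduce bijectivity from the surjectivity established in Proposition~\ref{prop:representingpglntorsors}. Your proposal is somewhat more explicit than the paper's (one-line) proof, particularly in the cocycle-level verification that the fiber-sequence action is the tensor product and in pointing out that the grouplike structure of $B\gm$ is what upgrades the exact sequence of pointed sets to the statement that all non-empty fibers are orbits, but these are details the paper leaves implicit rather than a genuinely different route.
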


The Chow ring of $BGL_2$ is isomorphic to a formal power series ring over $\bZ$
in two variables $c_1$ and $c_2$, the first and second Chern class.  It follows
from this observation that $c_1$ and $c_2$ yield well-defined (pointed)
functions
\[
c_i: [X,BGL_2]_{\bA¹} \longrightarrow CH^i(X).
\]
These functions are useful in describing the set $[X,BGL_2]_{\bA¹}$ for $X$ a
smooth surface.  More precisely, we have the following result.

\begin{lem}\label{lem:homotopyvectorbundlesonX}
  Assume that the base field $k$ has characteristic unequal to two, and that $X$
  is a (connected) smooth $k$-scheme which is $\bA¹$-weakly equivalent to a
  smooth scheme of dimension $≤ 2$.  Then the map $(c_1,c_2): [X,BGL_2]_{\bA¹} →
  \Pic(X) ⨯ CH²(X)$ is a bijection.
\end{lem}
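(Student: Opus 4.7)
The plan is to apply obstruction theory to the $\bA^1$-Postnikov tower of $BGL_2$, using the dimension hypothesis on $X$ to kill all higher obstructions and, ultimately, to identify the relevant Chow-Witt group with the usual Chow group.

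First, since both source and target of $(c_1,c_2)$ depend only on the $\bA^1$-weak equivalence class of $X$, and by the Jouanolou-Thomason homotopy lemma (as used already in the proof of Proposition~\ref{prop:representingpglntorsors}), I may assume that $X$ itself is smooth affine of dimension at most $2$. Under this reduction $[X,BGL_2]_{\bA^1}$ is in bijection with isomorphism classes of rank-two vector bundles on $X$ by \cite[Thm.~8.1.3]{MField}, so the statement becomes a classification result for rank-two vector bundles by Chern classes on smooth affine surfaces.

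Next, I analyse the $\bA^1$-Postnikov tower of $BGL_2$. The determinant realises $\tau_{\leq 1}BGL_2 \simeq B\gm$, and the induced map $[X,BGL_2]_{\bA^1} \to \Pic(X) = [X,B\gm]_{\bA^1}$ is exactly $c_1$. By Proposition~\ref{prop:homotopysheavesofpgl2}, $\bpi_2^{\bA^1}(BGL_2) \cong \K^{MW}_2$, so the second stage of the tower is a principal fibration $K(\K^{MW}_2,2) \to \tau_{\leq 2}BGL_2 \to B\gm$, with the coefficient sheaf carrying a $\gm$-action coming from the outer action of $\bpi_1^{\bA^1}(B\gm)$. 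Because $\K^{MW}_2$ and all higher sheaves $\bpi_n^{\bA^1}(BGL_2)$ ($n\geq 2$) are strictly $\bA^1$-invariant, they admit Gersten-type (Cousin) resolutions of length $\dim X$, so their Nisnevich cohomology on $X$ vanishes in degrees $>2$. All obstructions to lifting from $\tau_{\leq 2}BGL_2$ to $BGL_2$ and all indeterminacies of such lifts thus live in vanishing groups; it follows that $[X,BGL_2]_{\bA^1} \to [X,\tau_{\leq 2}BGL_2]_{\bA^1}$ is a bijection, and the latter set fits in a short exact sequence realising it as an extension of $\Pic(X)$ by a (possibly twisted) group $H^2\bigl(X,\,\K^{MW}_2\bigr)$.

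It remains to identify this group with $CH^2(X) = H^2\bigl(X,\,\K^M_2\bigr)$ (Bloch's formula) in such a way that the resulting invariant is the second Chern class; this is the main obstacle. The canonical surjection $\K^{MW}_2 \twoheadrightarrow \K^M_2$ has kernel controlled by the third power $I^3$ of the fundamental ideal in the Witt ring. Over an algebraically closed field $k$ of characteristic different from $2$, function fields of smooth $k$-varieties of transcendence degree at most $2$ have trivial $I^3$ by Tsen-type arguments ($k$ being $C_2$), so $\K^{MW}_2 \to \K^M_2$ is an isomorphism on all residue fields that occur in the Gersten complex of $X$, and hence the induced map on $H^2$ is an isomorphism. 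The same algebraic-closure hypothesis makes the $\gm$-twist trivialisable on stalks. A routine check comparing the coboundary in the Postnikov long exact sequence with the classical definition of $c_2$ confirms that the torsor parameter is the second Chern class, completing the proof.
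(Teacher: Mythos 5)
Your proposal follows essentially the same route as the paper: truncate the $\bA^1$-Postnikov tower of $BGL_2$ at the second stage using the dimension hypothesis, identify the first-stage invariant with $c_1$ via the determinant, and reduce the twisted second-stage invariant in $H^2(X,\K^{MW}_2(\sL))$ to $CH^2(X)$ using that $k$ is algebraically closed. The paper accomplishes these steps by citing \cite[Prop.~6.2, Prop.~6.3, Cor.~5.3, Thm.~6.6]{AsokFaselThreefolds}, whereas you spell out the Gersten-resolution vanishing and the $I^3$-triviality directly; one small slip is the parenthetical ``($k$ being $C_2$)'' --- $k$ itself is $C_0$ since it is algebraically closed, and what you actually use is that function fields of transcendence degree $\leq 2$ over $k$ are $C_2$ by Tsen--Lang, which does give the needed vanishing of $I^3$ on the residue fields appearing in the Rost--Schmid complex in the relevant degrees.
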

\begin{proof}
  We compute $[X,BGL_2]_{\bA¹}$ using obstruction theory.  By the same argument
  as \cite[Prop.~6.2]{AsokFaselThreefolds}, under the hypothesis on $X$, the
  canonical map
  \[
  [X,BGL_2^{(2)}]_{\bA¹} \longrightarrow [X,BGL_2]_{\bA¹}
  \]
  is a bijection.

  The second stage of the Postnikov tower of $BGL_2$ is described in
  \cite[§~6]{AsokFaselThreefolds}.  In particular, if $X$ is as in the
  statement, then by \cite[Prop.~6.3]{AsokFaselThreefolds} a map $X →
  BGL_2^{(2)}$ consists of a pair $(\sL,α)$ where $\sL$ is a line bundle on $X$,
  and $α ∈ H² \bigl(X, \, \K^{MW}_2(\sL) \bigr)$.  If $\sL'$ is another line
  bundle on $X$, then there are canonical isomorphisms $H² \bigl(X, \,
  \K^{MW}_2(\sL) \bigr) \cong H² \bigl( X,\, \K^{MW}_2(\sL \tensor
  {\sL'}^{\tensor 2}) \bigr)$.  Since the base field $k$ is assumed
  algebraically closed, the canonical map $H² \bigl( X,\, \K^{MW}_2(\sL) \bigr)
  → H² \bigl(X,\, \K^M_2 \bigr)$ is a bijection, cf.~the proof of
  \cite[Cor.~5.3]{AsokFaselThreefolds}.  In that case, the identification of
  $\sL$ with $c_1$ is clear, and the identification of the class in $H²
  \bigl(\bP²,\,\K^{MW}_2(\sL) \bigr)$ with $c_2$ is contained in the proof of
  \cite[Thm.~6.6]{AsokFaselThreefolds}.
\end{proof}

\begin{thm}\label{thm:pgl2torsors}
  Assume that the base field $k$ has characteristic unequal to two, and that $X$
  is a (connected) smooth $k$-scheme which is $\bA¹$-weakly equivalent to a
  smooth scheme of dimension $≤ 2$.  Then there is a bijection
  \[
  \factor{\Pic(X) ⨯ CH²(X)}{\Pic(X)} \isomto [X,BPGL_2]_{\bA¹},
  \]
  where the action of $l ∈ \Pic(X)$ on $(c_1,c_2) ∈ \Pic(X) ⨯ CH²(X)$ is given
  by the formula
  \[
  l \cdot (c_1,c_2) = (c_1 + 2l,c_2 + lc_1 + l²).
  \]
\end{thm}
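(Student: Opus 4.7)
The strategy is to combine the two preceding results, namely Proposition~\ref{prop:homotopyclassificationpart1} and Lemma~\ref{lem:homotopyvectorbundlesonX}. The first of these identifies $[X,BPGL_2]_{\bA¹}$ with the quotient $[X,BGL_2]_{\bA¹}/\Pic(X)$, where the action of $\Pic(X)$ is by tensoring with line bundles after passing to a smooth affine model. The second identifies $[X,BGL_2]_{\bA¹}$ with $\Pic(X)⨯ CH²(X)$ via the pair of Chern class maps $(c_1,c_2)$. Composing these bijections immediately produces the required bijection at the level of sets; what remains is to transport the tensoring action through the second identification and to check that it matches the stated formula $l\cdot(c_1,c_2) = (c_1+2l,\,c_2+lc_1+l²)$.

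For the action formula, fix a smooth affine scheme $X'$ together with an $\bA¹$-weak equivalence $X'→ X$; via Jouanolou--Thomason such an $X'$ exists and, since Chern class maps and tensor products are compatible with pullback, it suffices to verify the formula on $X'$. There $[X',BGL_2]_{\bA¹}$ is in natural bijection with isomorphism classes of rank-two vector bundles, and the action of $\Pic(X')$ is literal tensoring, $(\sL,\sE)\mapsto \sL\tensor\sE$. The desired identities
\[
c_1(\sL\tensor\sE) = c_1(\sE)+2c_1(\sL),\qquad c_2(\sL\tensor\sE) = c_2(\sE) + c_1(\sL)c_1(\sE) + c_1(\sL)²
\]
are standard consequences of the Whitney sum formula and the splitting principle: if $\sE$ has formal Chern roots $α_1,α_2$, then $\sL\tensor\sE$ has roots $α_1+l$ and $α_2+l$, and expanding $(α_1+l)+(α_2+l)$ and $(α_1+l)(α_2+l)$ yields precisely the formulas above. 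Substituting $l$ for $c_1(\sL)$ and $(c_1,c_2)$ for $(c_1(\sE),c_2(\sE))$ gives the action as stated.

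The main conceptual obstacle has already been overcome in Proposition~\ref{prop:homotopyclassificationpart1} and Lemma~\ref{lem:homotopyvectorbundlesonX}; at this stage the only mildly delicate point is the compatibility of all identifications with the replacement $X \aoequiv X'$. This is taken care of by the fact that, for any space $\sY$, the morphism $[X,\sY]_{\bA¹} → [X',\sY]_{\bA¹}$ induced by $X'→ X$ is a bijection, applied in turn to $\sY = B\gm$, $\sY = BGL_2$ and $\sY = BPGL_2$, so that both Chern class maps and the $\Pic$-action can be computed on $X'$ while the final statement is formulated on $X$.
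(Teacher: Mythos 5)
Your proposal matches the paper's proof: both combine Proposition~\ref{prop:homotopyclassificationpart1} with Lemma~\ref{lem:homotopyvectorbundlesonX} and finish by verifying the Chern class formulas $c_1(\sL\tensor\sE)=c_1(\sE)+2c_1(\sL)$ and $c_2(\sL\tensor\sE)=c_2(\sE)+c_1(\sE)c_1(\sL)+c_1(\sL)^2$. The extra remarks about passing to a Jouanolou device and the splitting-principle derivation of the formulas are correct but simply spell out what the paper leaves implicit.
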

\begin{proof}
  This follows from Proposition~\ref{prop:homotopyclassificationpart1},
  Lemma~\ref{lem:homotopyvectorbundlesonX} and the following observation.  If
  $\sL$ is a line bundle on $X$ whose class in $\Pic(X)$ is $l$, and if $\sE$ is
  a rank-two vector bundle with Chern classes $c_1 = c_1(\sE)$ and $c_2 =
  c_2(\sE)$, then $c_1(\sL \tensor \sE) = c_1(\sE) + 2c_1(\sL)$ while $c_2(\sL
  \tensor \sE) = c_2(\sE) + c_1(\sE) c_1(\sL) + c_1(\sL)²$.
\end{proof}

If $H$ is a hyperplane class on $\bP²$, then $\Pic(\bP²) ⨯ CH²(\bP²) \cong \bZ
\cdot H ⨯ \bZ \cdot H²$.  In this special case, Theorem~\ref{thm:pgl2torsors}
simplifies to the following result.

\begin{cor}\label{cor:pgl2torsorsonp2}
  Assume that the base field $k$ has characteristic unequal to two.  Then there
  is an identification
  \[
  (\bZ\cdot H \oplus \bZ \cdot H²)/\bZ \isomto [\bP²,BPGL_2]_{\bA¹},
  \]
  where $\bZ$ acts on $\bZ^{\oplus 2}$ by the formula
  \[
  n \cdot (a,b) = (a + 2n,b + an + n²).  \eqno\qed
  \]
\end{cor}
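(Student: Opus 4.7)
The plan is to derive the corollary as a direct specialization of Theorem~\ref{thm:pgl2torsors} to the case $X = \bP²$. First I would verify the hypotheses: $\bP²$ is itself a connected smooth $k$-scheme of dimension two, so the requirement that $X$ be $\bA¹$-weakly equivalent to a smooth scheme of dimension at most two is automatic, and the characteristic assumption (unequal to two) carries over unchanged. Theorem~\ref{thm:pgl2torsors} therefore produces a bijection
\[
\factor{\Pic(\bP²) ⨯ CH²(\bP²)}{\Pic(\bP²)} \isomto [\bP²,BPGL_2]_{\bA¹}.
\]

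Next I would invoke the global identifications fixed in Section~\ref{sec:conventions}: the choice of hyperplane class $H$ on $\bP²$ gives $\Pic(\bP²) \cong \bZ \cdot H$ and $CH²(\bP²) \cong \bZ \cdot H²$, so the left hand side is canonically $(\bZ \cdot H \oplus \bZ \cdot H²)/\bZ$. Writing $l = nH$ for an element of $\Pic(\bP²)$ and $(c_1, c_2) = (aH, bH²)$ for a class in $\Pic(\bP²) \oplus CH²(\bP²)$, the action formula of Theorem~\ref{thm:pgl2torsors}, namely
\[
l \cdot (c_1, c_2) = (c_1 + 2l,\, c_2 + l c_1 + l²),
\]
transports via the intersection identity $H \cdot H = H²$ in the Chow ring of $\bP²$ to
\[
(nH) \cdot (aH,\, bH²) = \bigl( (a+2n) H,\; (b + an + n²) H² \bigr),
\]
which under the identification $\Pic(\bP²) \oplus CH²(\bP²) \cong \bZ^{\oplus 2}$ is exactly the action $n \cdot (a,b) = (a+2n,\, b + an + n²)$ stated in the corollary.

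There is no genuine obstacle here, since all the substantive work has already been done upstream: Proposition~\ref{prop:representingpglntorsors} gives the surjectivity of $[X,BGL_2]_{\bA¹} \to [X,BPGL_2]_{\bA¹}$ through the Moore–Postnikov obstruction analysis, Lemma~\ref{lem:homotopyvectorbundlesonX} identifies $[X,BGL_2]_{\bA¹}$ with pairs of Chern classes under the dimension hypothesis, and Theorem~\ref{thm:pgl2torsors} computes the resulting $\Pic(X)$-action. Corollary~\ref{cor:pgl2torsorsonp2} is then a routine translation of that theorem into integer coordinates along the isomorphisms determined by $H$.
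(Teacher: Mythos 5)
Your proposal matches the paper's intent exactly: the corollary carries a \qed with no separate proof because, as the preceding sentence in the paper notes, it is just Theorem~\ref{thm:pgl2torsors} specialized to $X = \bP²$ after identifying $\Pic(\bP²) ⨯ CH²(\bP²)$ with $\bZ \cdot H ⨯ \bZ \cdot H²$. Your check of the hypotheses and the coordinate translation of the action formula are correct and add nothing beyond what the paper takes to be immediate.
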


\begin{rem}
  Schwarzenberger showed that for arbitrary pairs of integers $(a,b)$, there
  exists a vector bundle on $\bP²$ with first Chern class $a$ and second Chern
  class $b$, \cite[Thm.~8]{schwarzenberger:surfaces}.  In particular, his
  construction yields an alternative verification of the surjectivity of
  \[
  H¹_{\Nis} \bigl(\bP²,\,PGL_2 \bigr) \longrightarrow [\bP²,BPGL_2]_{\bA¹}
  \]
  that is independent of Proposition~\ref{prop:representingpglntorsors}.
\end{rem}

%
%

\section{\texorpdfstring{$\bA¹$}{A\textonesuperior}-homotopy classification of
  \texorpdfstring{$\bP¹$}{P\textonesuperior}-bundles over \texorpdfstring{$\bP²$}{P\texttwosuperior}}
\label{sec:04}

In this section, we classify Nisnevich locally trivial $\bP¹$-bundles over
$\bP²$ up to $\bA¹$-weak equivalence of total spaces, using
Corollary~\ref{cor:pgl2torsorsonp2}.  Each Nisnevich locally trivial
$PGL_2$-torsor on a smooth scheme $X$ yields a Nisnevich locally trivial
$\bP¹$-bundle on $X$ by ``passing to the associated fiber space''.  Conversely,
any Nisnevich locally trivial $\bP¹$-bundle on $X$ yields a $PGL_2$-torsor on
$X$ by ``forming the scheme of automorphisms''.  A Nisnevich locally trivial
$\bP¹$-bundle on a smooth scheme is automatically Zariski locally trivial and is
therefore the projectivization of a rank-two vector bundle on $X$.  Thus, this
section aims to classify projectivizations of rank-two vector bundles on $\bP²$
up to $\bA¹$-weak equivalence.

We first show in
Corollary~\ref{cor:weaklyequivalentbundleshaveweaklyequivalenttotalspaces} that,
given a pair of Nisnevich locally trivial $PGL_2$-torsors on a smooth scheme $X$
whose classifying maps coincide in $[X,BPGL_2]_{\bA¹}$, the total spaces of the
associated $\bP¹$-bundles on $X$ are $\bA¹$-weakly equivalent.  Specializing to
the case where $X = \bP²$, we then observe in
Theorem~\ref{thm:a1homotopyp1bundles}, by means of Chow ring computations, that
the $\bP¹$-bundles corresponding to distinct elements of $[\bP²,BPGL_2]_{\bA¹}$
can be distinguished.

\subsection{\texorpdfstring{$\bA¹$}{A\textonesuperior}-classification of projective bundles}

Let $n ≥ 2$ be a natural number, and assume that the characteristic of $k$ does
not divide $n$.  Write $\mathscr{Gr}_n$ for the infinite Grassmannian
parametrizing $n$-dimensional subspaces of the free $k$-vector space generated
by $\bZ$.  Suppose $X$ is a (connected) smooth $k$-scheme, and $\sE$ is a
rank-$n$ vector bundle on $X$.  Since $\mathscr{Gr}_n$ is $\bA¹$-weakly
equivalent to the space $BGL_n$ described in the previous Section~\ref{sec:03},
the classifying map $f_{\sE}$ of $\sE$, as discussed in
Section~\ref{ss:torsors}, determines an element in $[X,\mathscr{Gr}_n]_{\bA¹}$.

Write $γ_n$ for the universal rank-$n$ vector bundle on $\mathscr{Gr}_n$.  The
class of the map $f_{\sE}$ in $[X,\mathscr{Gr}_n]_{\bA¹}$ need not be
represented by an actual morphism from $X$ to $\mathscr{Gr}_n$.  Since $X$ is
smooth, the Jouanolou-Thomason homotopy lemma, \cite[Prop.~4.4]{WeibelHAK},
guarantees that there always exists a smooth affine scheme $X'$ and a morphism
$π: X' → X$ that is a torsor under a vector bundle on $X$.  In particular, $π$
is an $\bA¹$-weak equivalence.  In general, the pair $(X',π)$ is not unique, and
we refer to a choice of such a pair as \emph{a Jouanolou device}.  If we write
$\sE'$ for $π^* \sE$, then by \cite[Thm.~8.1.3]{MField} the classifying map
$f_{\sE'}$ is represented by a morphism $X → \mathscr{Gr}_n$ that, by abuse of
terminology, we will also denote $f_{\sE'}: X' → \mathscr{Gr}_n$.

It follows that the morphism $\bP_{X'}(\sE') → X'$ is the pullback of
$\bP_{\mathscr{Gr}_n}(γ_n) → \mathscr{Gr}_n$ along the morphism $f_{\sE'}$ of
the previous paragraph.  On the other hand, there is a pullback square of the
form
\[
\xymatrix{
  \bP_{X'}(\sE') \ar[r]\ar[d]& X' \ar[d] \\
  \bP_{X}(\sE) \ar[r] & X }
\]
since $\bP_{X}(\sE) ⨯_{X} X' \cong \bP_{X'}(\sE')$ by \cite[Prop.~3.5.3]{EGA2}.
Since the right hand vertical morphism is a torsor under a vector bundle, the
left hand vertical map is a torsor under a vector bundle as well\footnote{In
  fact, it is a torsor under the pull-back along $π$ of the vector bundle on $X$
  under which $π$ is a torsor.} and is, in particular, an $\bA¹$-weak
equivalence.

\begin{prop}\label{prop:vectorbundlesgiveweakequivalences}
  Let $n≥ 2$ be a natural number, and assume $k$ has characteristic that does
  not divide $n$.  Suppose $X$ is a smooth $k$-scheme and $\sE_0$, $\sE_1$ are a
  pair of rank-$n$ vector bundles on $X$ with classifying maps $f_0$ and $f_1$.
  If the classes of $f_0$ and $f_1$ are equal in $[X,\Grass]_{\bA¹}$, then the
  projective bundles $\bP_X(\sE_0)$ and $\bP_X(\sE_1)$ are $\bA¹$-weakly
  equivalent.
\end{prop}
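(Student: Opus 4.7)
The plan is to reduce to the affine case by means of a Jouanolou device, where $\bA^1$-homotopy classes of maps into the Grassmannian coincide with isomorphism classes of vector bundles, and then to transport the resulting scheme-theoretic isomorphism back to $X$.

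First, I would choose a Jouanolou device $π \colon X' → X$ as in the discussion preceding the statement of the proposition. Since $π$ is a torsor under a vector bundle, it is an $\bA^1$-weak equivalence, so the induced map $[X,\Grass]_{\bA^1} → [X',\Grass]_{\bA^1}$ is a bijection. Setting $\sE_i' := π^\ast \sE_i$, the classifying map of $\sE_i'$ is represented by the composite $f_i ◦ π$, and the hypothesis on $f_0, f_1$ transfers to the equality of the classes of $f_{\sE_0'}$ and $f_{\sE_1'}$ in $[X',\Grass]_{\bA^1}$.

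Second, since $X'$ is smooth and affine and $\Grass \aoequiv BGL_n$, the Morel classification result \cite[Thm.~8.1.3]{MField} (as invoked in the paragraph preceding the proposition) identifies $[X',\Grass]_{\bA^1}$ with the set of isomorphism classes of rank-$n$ vector bundles on $X'$. The equality of classifying maps obtained in the previous step therefore promotes to an honest isomorphism $\sE_0' \cong \sE_1'$ of vector bundles on $X'$, and any such isomorphism induces an isomorphism of schemes $\bP_{X'}(\sE_0') \cong \bP_{X'}(\sE_1')$ over $X'$.

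Third, the pullback-square diagram in the paragraph immediately before the proposition exhibits the projection $\bP_{X'}(\sE_i') → \bP_X(\sE_i)$ as a torsor under a vector bundle (namely, the pullback along the base-change of $π$ of the vector bundle under which $π$ is itself a torsor). In particular each such projection is an $\bA^1$-weak equivalence. Composing the zig-zag
\[
\bP_X(\sE_0) \longleftarrow \bP_{X'}(\sE_0') \stackrel{\cong}{\longrightarrow} \bP_{X'}(\sE_1') \longrightarrow \bP_X(\sE_1)
\]
in the $\bA^1$-homotopy category yields the desired $\bA^1$-weak equivalence.

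The proof is essentially assembly: the real content (the Jouanolou trick and the affine representability theorem) has already been set up in the preceding discussion, so no serious obstacle remains. The only point that warrants care is checking that the pullback square genuinely identifies $\bP_{X'}(\sE_i')$ with the base change of $\bP_X(\sE_i)$ along $π$—this is \cite[Prop.~3.5.3]{EGA2}, which has already been cited—and that a fibrewise isomorphism of rank-$n$ vector bundles induces an isomorphism of associated projective bundles as schemes over the base, which is immediate from the functoriality of $\bP(-)$.
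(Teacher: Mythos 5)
Your proof is correct and follows essentially the same route as the paper: choose a Jouanolou device, descend to isomorphism of pulled-back bundles on the affine replacement via Morel's affine representability, and assemble the zig-zag of $\bA^1$-weak equivalences between the two projective bundles. The only cosmetic difference is the order in which you present the steps, which does not affect the substance.
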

\begin{proof}
  We produce an explicit chain of three $\bA¹$-weak equivalences between the two
  projective bundles.  First, fix a Jouanolou device $π: X' → X$.  Write $\sE_i'
  := π^* \sE_i$.  By the discussion just prior to the statement, the maps
  $\bP_{X'}(\sE'_i) → \bP_{X}(\sE_i)$ are $\bA¹$-weak equivalences.  If the
  classifying maps $f_i$ of the vector bundles $\sE_i$ lie in the same class in
  $[X,\mathscr{Gr}_n]_{\bA¹}$, then, since the map $[X,\mathscr{Gr}_n]_{\bA¹} →
  [X',\mathscr{Gr}_n]_{\bA¹}$ induced by pullback is a bijection, it follows
  from \cite[Thm.~8.1.3]{MField} that the bundles $\sE'_i$ are actually
  isomorphic as vector bundles on $X'$.  A choice of such an isomorphism induces
  an isomorphism of the total spaces of the associated projective bundles
  $\bP_{X'}(\sE'_0) \cong \bP_{X'}(\sE'_1)$.  Thus we have constructed a diagram
  $$
  \bP_{X}(\sE_0) \longleftarrow \bP_{X'}(\sE'_0) \longrightarrow
  \bP_{X'}(\sE'_1) \longrightarrow \bP_{X}(\sE_1)
  $$
  where each morphism is an $\bA¹$-weak equivalence.
\end{proof}

\begin{cor}\label{cor:weaklyequivalentbundleshaveweaklyequivalenttotalspaces}
  Assume that $k$ has characteristic unequal to two and that $X$ is a smooth
  $k$-scheme that is $\bA¹$-weakly equivalent to a smooth $k$-scheme of
  dimension $≤ 2$.  Suppose $\sE_0$, $\sE_1$ are two rank-two vector bundles on
  $X$.  If $f_i$ is the classifying morphism of the Zariski locally trivial
  $PGL_2$-torsor associated with $\bP_{X}(\sE_i)$, and if the classes of $f_i$
  in $[X,BPGL_2]_{\bA¹}$ coincide, then $\bP_{X}(\sE_0)$ is $\bA¹$-weakly
  equivalent to $\bP_{X}(\sE_1)$.
\end{cor}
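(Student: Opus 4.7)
The plan is to combine Proposition~\ref{prop:homotopyclassificationpart1}, which describes $[X,BPGL_2]_{\bA¹}$ as the quotient of $[X,BGL_2]_{\bA¹}$ by the twisting action of $\Pic(X)$, with Proposition~\ref{prop:vectorbundlesgiveweakequivalences}, which already shows that vector bundles whose classifying maps agree in $[X,BGL_2]_{\bA¹}$ have $\bA¹$-weakly equivalent projectivizations. The elementary geometric observation that glues these together is that projectivization is insensitive to twisting by a line bundle.

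More precisely, I would first invoke Proposition~\ref{prop:homotopyclassificationpart1} (which applies since the characteristic of $k$ does not divide $2$): the hypothesis that the classifying maps $f_0,f_1$ coincide in $[X,BPGL_2]_{\bA¹}$ translates into the statement that the classes of $\sE_0$ and $\sE_1$ in $[X,BGL_2]_{\bA¹}$ lie in a single orbit of the $\Pic(X)$-action described there. Consequently, there exists a line bundle $\sL$ on $X$ such that, after pulling back to any Jouanolou device $X'\to X$, the vector bundles $\sL\otimes \sE_0$ and $\sE_1$ have the same classifying map in $[X,BGL_2]_{\bA¹}$.

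Next, I would apply Proposition~\ref{prop:vectorbundlesgiveweakequivalences} to the pair $(\sL\otimes \sE_0, \sE_1)$: since their classifying maps agree in $[X,BGL_2]_{\bA¹}\cong[X,\mathscr{Gr}_2]_{\bA¹}$, the projective bundles $\bP_X(\sL\otimes \sE_0)$ and $\bP_X(\sE_1)$ are $\bA¹$-weakly equivalent. Finally, I would invoke the standard identification $\bP_X(\sL\otimes \sE_0)\cong \bP_X(\sE_0)$ as $X$-schemes, since twisting a rank-two bundle by a line bundle does not affect its projectivization. Composing the two equivalences yields an $\bA¹$-weak equivalence $\bP_X(\sE_0)\simeq_{\bA¹}\bP_X(\sE_1)$.

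There is no real obstacle here: all the heavy lifting has been done in the preceding results of Sections~\ref{sec:03} and \ref{sec:04}. The only point that requires any care is bookkeeping with the Jouanolou device when unpacking the $\Pic(X)$-action on $[X,BGL_2]_{\bA¹}$, but since both this action and the equivalence produced by Proposition~\ref{prop:vectorbundlesgiveweakequivalences} are formulated through exactly the same Jouanolou construction, the line bundle $\sL$ appearing above can unambiguously be taken on $X$ (or, if preferred, on an affine model $X'$), and the argument concludes in a single chain of $\bA¹$-weak equivalences.
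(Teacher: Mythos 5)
Your proof is correct, and it takes a slightly cleaner route than the paper's own. Where the paper passes from $[X,BPGL_2]_{\bA¹}$ to Chern-class data using Theorem~\ref{thm:pgl2torsors} and then back up to $[X,\Grass]_{\bA¹}$ via Lemma~\ref{lem:homotopyvectorbundlesonX}, you instead apply Proposition~\ref{prop:homotopyclassificationpart1} directly to conclude that the classes of $\sE_0$ and $\sE_1$ in $[X,BGL_2]_{\bA¹}$ lie in a single $\Pic(X)$-orbit, and then finish exactly as the paper does (Proposition~\ref{prop:vectorbundlesgiveweakequivalences} plus the canonical identification $\bP_X(\sL\otimes\sE_0)\cong\bP_X(\sE_0)$). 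This buys something small but genuine: your argument never invokes the low-dimensionality of $X$, so it proves the statement without the hypothesis that $X$ be $\bA¹$-weakly equivalent to a surface. The paper's detour through Chern classes is only needed because its Theorem~\ref{thm:pgl2torsors} is phrased concretely in terms of $\Pic(X)\times CH^2(X)$, which is what the dimension restriction is for; your version makes clear that the corollary itself is independent of that restriction. Your handling of the Jouanolou device in defining and applying the $\Pic(X)$-action is correctly flagged, and since $\Pic(X)\to\Pic(X')$ is an isomorphism (both compute $[X,B\gm]_{\bA¹}$), the line bundle can indeed be taken on $X$.
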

\begin{proof}
  The classifying maps $f_i$ lie in the same class in $[X,BPGL_2]_{\bA¹}$ by
  assumption.  By Theorem~\ref{thm:pgl2torsors} it follows that the Chern
  classes of $\sE_0$ and those of $\sE_1$ lie in the same orbit for the action
  of $\Pic(X)$ on $\Pic(X) ⨯ CH²(X)$ coming from tensoring by a line
  bundle.

  In other words, there exists $\sL ∈ \Pic(X)$ such that the Chern classes of
  the twist $\sE_0 \tensor \sL$ coincide with those of $\sE_1$.  Therefore, by
  Lemma~\ref{lem:homotopyvectorbundlesonX}, the classifying maps of the vector
  bundles $\sE_0 \tensor \sL$ and $\sE_1$ lie in the same class in
  $[X,\mathscr{Gr}_n]_{\bA¹}$.  Applying
  Proposition~\ref{prop:vectorbundlesgiveweakequivalences} in this situation
  allows us to complete the proof.
\end{proof}

\subsection{Chow rings and Chern classes}
\CounterStep

\begin{computation}
\label{comp:chow}
  If $X$ is a smooth scheme, if $\sE$ is a rank-$n$ vector bundle on $X$ and
  $\bP_X(\sE)$ is the associated projective space bundle, then the Chow ring
  $CH^* \bigl( \bP_X(\sE) \bigr)$ is described by the projective bundle formula.
  More precisely,
  \begin{equation}\label{eq:CP}
    CH^* \bigl( \bP_X(\sE) \bigr) \cong \factor{CH^*(X)[τ]}{ \bigl\langle
      P_{\sE}(τ) \bigr\rangle}, \quad \text{where} \quad P_{\sE}(τ) :=
    \sum_{i=0}^n c_i(\sE)τ^{n-i}.
  \end{equation}
  If $X = \bP²$, if $H$ is a hyperplane class, and $\sE$ a rank-two vector
  bundle on $\bP²$ with Chern classes $c_1$, $c_2$, then \eqref{eq:CP}
  simplifies to
  $$
  CH^* \bigl(\bP_{\bP²}(\sE) \bigr) \cong \factor{\bZ[H,τ]}{\langle H^3,τ² + c_1
    Hτ + c_2 H² \rangle}.
  $$
  The ring structure equips $\Pic \bigl(\bP_{\bP²}(\sE) \bigr)$ with an integral
  cubic form which is computed to be the following,
  $$
  Φ : \Pic \bigl( \bP_{\bP²}(\sE) \bigr) → CH^3 \bigl( \bP_{\bP²}(\sE)
  \bigr) \cong \bZ, \qquad aH + bτ \mapsto 3a²b-3c_1ab²+(c_1²-c_2)b^3.
  $$
  The discriminant of $Φ$ is $c_1² - 4c_2$.
\end{computation}

Now assume that we are given two rank-two bundles on $\bP²$, say $\sE_1$ and
$\sE_2$, with arbitrary Chern classes.  Any isomorphism of graded rings, $CH^*
\bigl( \bP_{\bP²}(\sE_1) \bigr) → CH^* \bigl( \bP_{\bP²}(\sE_2) \bigr)$, induces
an invertible linear map of Picard groups, $\Pic \bigl(\bP_{\bP²}(\sE_1) \bigr)
→ \Pic \bigl(\bP_{\bP²}(\sE_2) \bigr)$, which, in terms of the bases above can
be identified with an element of $GL_2(\bZ)$.  From \cite[Ex.~5,
Prop.~18]{okonek:vandeven}, it follows that the $GL_2(\bZ)$-orbits are
distinguished by the discriminant.  We formulate this as a lemma.

\begin{lem}\label{lem:chowring}
  Let $\sE_1$ and $\sE_2$ be any two rank-two vector bundles on $\bP²$.  Then,
  the Chow rings of $\bP(\sE_1)$ and $\bP(\sE_2)$ are isomorphic if and only if
  the discriminants of the associated cubic forms on Picard groups are
  equal.  \qed
\end{lem}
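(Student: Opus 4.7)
The plan is to show both implications by tracking how a graded ring isomorphism interacts with the degree-one part of the Chow ring and how the cubic form is intrinsically encoded.

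For the forward implication, observe first that the cubic form is a piece of the ring structure: for $L \in \Pic \bigl( \bP_{\bP²}(\sE_i) \bigr) = CH^1$, one has $\Phi_i(L) = L^3 \in CH^3 \cong \bZ$. Consequently, any graded ring isomorphism $\psi \colon CH^* \bigl(\bP_{\bP²}(\sE_1)\bigr) \isomto CH^*\bigl( \bP_{\bP²}(\sE_2)\bigr)$ restricts in degree one to a group isomorphism $\psi_1 \colon \Pic(\bP_{\bP²}(\sE_1)) \isomto \Pic(\bP_{\bP²}(\sE_2))$, whose matrix in the bases $(H_i, τ_i)$ lies in $GL_2(\bZ)$, and in degree three to multiplication by $\pm 1$. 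Hence $\Phi_2 \circ \psi_1 = \pm \Phi_1$, so $\Phi_1$ and $\Phi_2$ are in the same $GL_2(\bZ)$-orbit up to an overall sign. Since the discriminant is invariant under $GL_2(\bZ)$-action and under negation of the form, this already forces $c_1(\sE_1)² - 4 c_2(\sE_1) = c_1(\sE_2)² - 4 c_2(\sE_2)$.

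For the converse, equality of discriminants together with the cited classification result \cite[Ex.~5, Prop.~18]{okonek:vandeven} produces an element $g \in GL_2(\bZ)$ that intertwines the two cubic forms on the Picard groups. The plan is to lift $g$ to a graded ring isomorphism. Concretely, one defines a map of polynomial rings $\bZ[H_1, τ_1] \to \bZ[H_2, τ_2]$ using $g$, and verifies that the defining relations $H_i^3 = 0$ and $τ_i² + c_1(\sE_i) H_i τ_i + c_2(\sE_i) H_i² = 0$ are respected. The cubic form $\Phi$ encodes the full triple-intersection pairing $CH^1 \otimes CH^1 \otimes CH^1 \to CH^3$; since $\bP_{\bP²}(\sE_i)$ is a smooth projective threefold whose Chow ring is generated in degree one, Poincar\'e duality recovers the pairing $CH^1 \otimes CH^2 \to CH^3$ and hence recovers the multiplicative structure in degree two from the cubic form. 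This guarantees that the two defining relations in degree two are matched by any $GL_2(\bZ)$-equivalence of cubic forms, giving the desired graded ring isomorphism.

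The main obstacle is the converse step: without appealing to Poincar\'e duality (or to an explicit change-of-basis calculation matching the degree-two relations term by term), one only obtains a cubic-form isomorphism, not yet a ring isomorphism. Once that bridge is in place, the content of the lemma is exactly the classification of the special binary cubic forms of the shape $3a²b-3c_1 ab²+(c_1²-c_2)b^3$ by their discriminant $c_1² - 4c_2$, which is supplied by the reference to Okonek--Van de Ven.
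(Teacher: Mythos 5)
Your proposal is correct, and the forward implication is essentially the paper's argument: a graded ring isomorphism restricts in degree one to an element of $GL_2(\bZ)$ and in degree three to $\pm 1$, so the two cubic forms lie in the same $GL_2(\bZ)$-orbit up to sign, which (since the discriminant of a binary cubic transforms by $\det^6 = 1$ and is invariant under $\Phi \mapsto -\Phi$, indeed $-\Phi = \Phi\circ(-\Id)$) forces equality of discriminants. The paper is terser and does not flag the possible sign in degree three, but as you note it is harmless.

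For the converse your route differs from what the paper actually uses. The paper's stated proof only invokes the Okonek--Van~de~Ven classification of $GL_2(\bZ)$-orbits; the implicit bridge from ``same $GL_2(\bZ)$-orbit of cubic forms'' to ``isomorphic Chow rings'' is carried out elsewhere, in the proof of Theorem~\ref{thm:a1homotopyp1bundles}, by elementary Chern-class arithmetic: since $c_1^2 - 4c_2 \equiv c_1 \pmod 2$, after twisting one may normalize $c_1 \in \{0,1\}$, and equal discriminants then force $(c_1,c_2) = (c_1',c_2')$, whence the explicit presentations $\bZ[H,\tau]/\langle H^3, \tau^2 + c_1 H\tau + c_2 H^2\rangle$ literally coincide. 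Your proposal instead reconstructs the whole graded ring from the cubic form via Poincar\'e duality, which is a valid and more conceptual route. It does require the observation (true here, since the cycle class map is an isomorphism for these varieties) that the intersection pairing $CH^1 \otimes CH^2 \to CH^3$ is perfect; once that is granted, $CH^2$ is recovered as the image of $\Sym^2 CH^1 \to \Hom(CH^1, CH^3)$ and a $GL_2(\bZ)$-equivalence of cubic forms lifts to a graded ring isomorphism. The elementary argument has the advantage of avoiding Poincar\'e duality and, as a by-product, showing the stronger statement that equal discriminants already force the Chern class pairs into the same $\Pic(\bP^2)$-orbit; your argument has the advantage of being intrinsic to the ring structure and not relying on the specific form of the action.
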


\begin{thm}\label{thm:a1homotopyp1bundles}
  Let $k$ be an algebraically closed field having characteristic unequal to
  two.  Suppose $\sE$ and $\sE'$ are rank-two vector bundles on $\bP²$ with Chern
  classes $(c_1,c_2)$ and $(c_1',c_2')$, respectively.  Then, an $\bA¹$-weak
  equivalence $\bP_{\bP²}(\sE) \aoequiv \bP_{\bP²}(\sE')$ exists if and only if
  $(c_1,c_2)$ and $(c_1',c_2')$ lie in the same orbit for the $\bZ$-action on
  $\bZ \cdot H \oplus \bZ \cdot H²$ described in
  Corollary~\ref{cor:pgl2torsorsonp2}.
\end{thm}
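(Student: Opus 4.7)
The \emph{if} direction is immediate from the preceding framework. If $(c_1,c_2)$ and $(c_1',c_2')$ lie in the same $\bZ$-orbit, then by Corollary~\ref{cor:pgl2torsorsonp2} the classifying maps of the associated $PGL_2$-torsors on $\bP^2$ agree in $[\bP^2,BPGL_2]_{\bA^1}$, and Corollary~\ref{cor:weaklyequivalentbundleshaveweaklyequivalenttotalspaces} then yields an $\bA^1$-weak equivalence $\bP_{\bP^2}(\sE) \aoequiv \bP_{\bP^2}(\sE')$.

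For the converse, suppose $f \colon \bP_{\bP^2}(\sE) \aoequiv \bP_{\bP^2}(\sE')$. My plan has three steps. First, since motivic cohomology is representable in $\mathscr{H}(k)$ by the motivic Eilenberg--MacLane ring spectrum, $f$ induces a graded ring isomorphism
\[
f^* \colon CH^*\bigl(\bP_{\bP^2}(\sE')\bigr) \isomto CH^*\bigl(\bP_{\bP^2}(\sE)\bigr).
\]
Second, I invoke Lemma~\ref{lem:chowring} --- the ``classical invariant theory'' input, via Okonek--van de Ven --- which identifies Chow-ring isomorphism classes with values of the discriminant of the associated cubic form on the Picard group; consequently the discriminants agree,
\[
c_1^2 - 4 c_2 \;=\; (c_1')^2 - 4 c_2'.
\]
Third, I would use the arithmetic observation that the $\bZ$-orbit of $(c_1,c_2)$ under the action $n\cdot(a,b) = (a + 2n,\, b + a n + n^2)$ of Corollary~\ref{cor:pgl2torsorsonp2} is determined by the discriminant $D := c_1^2 - 4 c_2$ alone. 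Indeed $D \bmod 4 \in \{0,1\}$ automatically recovers the parity of $c_1$, since $c_1^2 \equiv 0 \pmod 4$ when $c_1$ is even and $c_1^2 \equiv 1 \pmod 4$ when $c_1$ is odd; after translating via the $\bZ$-action to a canonical representative $c_1 \in \{0,1\}$, the value of $c_2$ is then pinned down by $D$.

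The main obstacle in this plan is the first step: one must be confident that an $\bA^1$-weak equivalence genuinely induces a \emph{ring} isomorphism of Chow rings, not merely a graded abelian-group isomorphism. This rests on $\bA^1$-representability of integral motivic cohomology together with the $E_\infty$-ring structure on the motivic Eilenberg--MacLane spectrum. Once that is granted, the remainder of the argument is a short chain: apply Lemma~\ref{lem:chowring}, then the elementary arithmetic above. In particular, no delicate invariant-theoretic computation beyond Lemma~\ref{lem:chowring} is required, because --- perhaps surprisingly --- the discriminant alone is already a complete invariant of the $\bZ$-orbit in $\Pic(\bP^2) \oplus CH^2(\bP^2)$.
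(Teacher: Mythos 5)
Your proposal is correct and takes essentially the same approach as the paper: the \emph{if} direction is verbatim the paper's argument via Corollary~\ref{cor:pgl2torsorsonp2} and Corollary~\ref{cor:weaklyequivalentbundleshaveweaklyequivalenttotalspaces}, and the converse follows the same chain (Chow ring isomorphism $\to$ equal discriminants via Lemma~\ref{lem:chowring} $\to$ same $\bZ$-orbit). Your final arithmetic observation that the discriminant $D = c_1^2 - 4c_2$ is a \emph{complete} invariant of the $\bZ$-orbit is exactly what the paper does, just stated more explicitly: the paper notes $D \equiv c_1 \pmod{2}$, normalizes both first Chern classes to $\{0,1\}$ by twisting, and then concludes $c_2 = c_2'$ from equality of discriminants --- which is your canonical-representative argument unwound. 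The one place you hedge --- whether an $\bA^1$-weak equivalence induces a \emph{ring} isomorphism of Chow rings --- is indeed asserted without comment in the paper; it is standard (motivic cohomology of smooth schemes is $\bA^1$-representable with its multiplicative structure, and agrees with Chow groups in the relevant bidegrees), so there is no gap.
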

\begin{proof}
  If $(c_1,c_2)$ lies in the same $\bZ$-orbit as $(c_1',c_2')$, then the
  associated projective bundles are $\bA¹$-weakly equivalent: by
  Corollary~\ref{cor:pgl2torsorsonp2} the $\bA¹$-homotopy class of
  $[\bP²,BPGL_2]$ is equivalent to specifying the $\bZ$-orbit of the pair
  $(c_1,c_2)$.  It follows that the $\bA¹$-homotopy classes corresponding to the
  classifying maps of $\bP(\sE)$ and $\bP(\sE')$ agree.  Given this fact,
  Corollary~\ref{cor:weaklyequivalentbundleshaveweaklyequivalenttotalspaces}
  implies that the projective bundles associated with these vector bundles are
  $\bA¹$-weakly equivalent.

  Conversely, suppose that we have an $\bA¹$-weak equivalence $\bP_{\bP²}(\sE)
  \aoequiv \bP_{\bP²}(\sE')$.  In this case, there is a ring isomorphism $CH^*
  \bigl( \bP_{\bP²}(\sE) \bigr) \cong CH^* \bigl( \bP_{\bP²}(\sE') \bigr)$ and
  in particular, the cubic forms on Picard groups are isomorphic and therefore
  have equal discriminants by Lemma~\ref{lem:chowring}.  We need to show that
  $(c_1,c_2)$ and $(c_1',c_2')$ lie in the same $\bZ$-orbit.  Note that
  $c_1(\sE)² - 4c_2(\sE) \equiv c_1(\sE) \mod 2$.  By definition, tensoring
  $\sE$ by a line bundle preserves the $\bZ$-orbit of $(c_1,c_2)$.  After
  replacing $\sE$ and $\sE'$ by $\sE \tensor \sL$ and $\sE' \tensor \sL'$ if
  necessary, we can assume that $c_1(\sE)$ and $c_1(\sE')$ are either both equal
  to zero or both equal to one.  Now, the equality of discriminants implies that
  $4 \cdot c_2(\sE) = 4 \cdot c_2(\sE')$, and therefore the second Chern classes
  of the bundles must be equal as well.  It follows that the vector bundles
  $\sE$ and $\sE'$ can be assumed to have $(c_1,c_2) = (c_1',c_2')$.  But then
  $(c_1,c_2)$ and $(c_1',c_2')$ obviously lie in the same $\bZ$-orbit.
\end{proof}

%
%

\section{Concordance classification of rank-two vector bundles over \texorpdfstring{$\bP²$}{P\texttwosuperior}}
\label{sec:05}

This section discusses the $\bA¹$-concordance classification of rank-two vector
bundles on $\bP²$.  Our main result, Theorem~\ref{thm:concordanceclass}, asserts
that among all bundles with vanishing second Chern class, the first Chern class
is the only concordance-invariant.  To be more precise, we show that any
rank-two vector bundle on $\bP²$ with arbitrary first Chern class $c_1$ and
second Chern class $c_2 = 0$ is $\bA¹$-concordant to the split bundle
$\sO_{\bP²} \oplus \sO_{\bP²}(c_1)$.  This result allows us, in the subsequent
Section~\ref{sec:06}, to obtain $\bA¹$-$h$-cobordism classification results for
projectivizations of ``topologically split'' bundles.

\begin{defn}[Concordance and direct concordance]
  Given a $k$-scheme $X$ and two vector bundles $\sE_0$, $\sE_1$ on $X$, we say
  that $\sE_0$ and $\sE_1$ are \emph{directly $\bA¹$-concordant} if there exists
  a vector bundle $\sE$ over $X ⨯ \bA¹$ such that $\sE_0 \cong ι_0^* \sE$ and
  $\sE_1 \cong ι_1^* \sE$, where $ι_i : X → X ⨯ \{i \} ⊂ X ⨯ \bA¹$ are the
  obvious inclusions.  The vector bundles $\sE_0$ and $\sE_1$ on $X$ are said to
  be \emph{$\bA¹$-concordant} if they are equivalent under the equivalence
  relation generated by direct $\bA¹$-concordance.
\end{defn}

\begin{rem}
  A direct $\bA¹$-concordance is a deformation of vector bundles.  If two vector
  bundles are $\bA¹$-concordant, then they can be deformed into each other, over
  a base space that need not be irreducible.  On the other hand, if two vector
  bundles are deformation equivalent, then they need not be $\bA¹$-concordant
  since the parameter space of the deformation need not contain any affine
  lines.
\end{rem}

\begin{rem}
  The homotopy invariance results of Quillen-Suslin \cite{quillen} and Lindel
  \cite{lindel} imply that for smooth, affine $X$, the notion of
  $\bA¹$-concordance agrees with vector bundle isomorphism.  However, over
  non-smooth or non-affine base schemes, there are non-trivial deformations and
  $\bA¹$-concordances of vector bundles.
\end{rem}

\begin{thm}[Concordance classification of vector bundles with $c_2=0$]\label{thm:concordanceclass}
  If $\sE$ is a rank-two vector bundle on $\bP²$ with arbitrary first Chern
  class $c_1$ and with vanishing second Chern class $c_2=0$, then $\sE$ is
  $\bA¹$-concordant to $\sO_{\bP²} \oplus \sO_{\bP²}(c_1)$.  Thus, any two
  rank-two vector bundles on $\bP²$ with first Chern class $c_1$ and second
  Chern class $0$ are $\bA¹$-concordant.
\end{thm}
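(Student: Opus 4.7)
The strategy is to construct an explicit $\bA^1$-concordance using the Hartshorne-Serre correspondence in the relative form provided by Corollary~\ref{cor:rhsc}. Because $\bA^1$-concordance is preserved under tensoring with line bundles pulled back from $\bP^2$, it suffices to show that $\sE(N)$ is $\bA^1$-concordant to $\sO(N)\oplus \sO(c_1+N)$ for some sufficiently large integer $N$; the theorem then follows by tensoring the resulting family on $\bP^2\times\bA^1$ with $\pi_1^*\sO(-N)$.

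For $N\gg 0$, Fact~\ref{fact:bertini} provides a global section $s_0\in H^0(\sE(N))$ whose zero scheme $Y_0\subset \bP^2$ is a reduced subscheme of length $m:=c_2(\sE(N))=N(N+c_1)$. By Fact~\ref{fact:arrondo1} and Theorem~\ref{thm:arrondo2}, this realizes $\sE(N)$ as a Hartshorne-Serre extension $0\to \sO\to \sE(N)\to \sI_{Y_0}\otimes \sO(c_1+2N)\to 0$ with nowhere-vanishing extension class $\sigma_0\in H^0(Y_0,\sO_{Y_0})$. For the split bundle, a generic pair $(f,g)$ with $f\in H^0(\sO(N))$ and $g\in H^0(\sO(c_1+N))$ analogously gives a Hartshorne-Serre presentation with $Y_1:=V(f)\cap V(g)$ reduced of length $m$ by Bézout, and with nowhere-vanishing extension class $\sigma_1$. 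I would then pick $m$ pairwise disjoint morphisms $\bA^1\to \bP^2$, giving sections of $\pi:\bP^2\times\bA^1\to\bA^1$ that specialize to $Y_0$ over $t=0$ and to $Y_1$ over $t=1$, and apply Corollary~\ref{cor:rhsc} to extend $\sE(N)$ to a rank-two bundle $\tilde\sE$ on $\bP^2\times\bA^1$ with $\tilde\sE|_{t=0}\cong \sE(N)$.

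The crux is then to arrange that $\tilde\sE|_{t=1}\cong \sO(N)\oplus \sO(c_1+N)$. By Theorem~\ref{thm:arrondo2}, the extension class $\sigma$ of $\tilde\sE$ is a nowhere-vanishing section of $\sO_Y$; since $k$ is algebraically closed and $Y$ is a disjoint union of affine lines, $\sigma$ is componentwise a nonzero constant, so the class at $t=1$ equals $\sigma_0$ transported along the chosen bijection of points. The main obstacle is to choose the Hartshorne-Serre data for $\sE(N)$ and for $\sO(N)\oplus \sO(c_1+N)$ so that these transported classes agree, up to the $k^*$-scaling and permutation symmetries inherent to Hartshorne-Serre data. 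I would handle this by exploiting the large parameter families involved: for $N\gg 0$, both the section $s_0$ (varying in $H^0(\sE(N))$) and the generic pair $(f,g)$ sweep their extension classes over open subsets of the same $(m-1)$-dimensional torus $(k^*)^m/k^*$, so a generic choice yields a match; and if a single step fails, several direct $\bA^1$-concordances can be chained, which is admissible since $\bA^1$-concordance is by definition generated by direct $\bA^1$-concordance. Tensoring the resulting family by $\pi_1^*\sO(-N)$ gives the desired $\bA^1$-concordance $\sE\aoequiv \sO\oplus \sO(c_1)$.
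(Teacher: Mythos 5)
Your setup --- twist by $\sO_{\bP^2}(N)$, use the Bertini theorem to obtain a reduced zero locus $Y_0$, and move the points via Corollary~\ref{cor:rhsc} --- tracks the spirit of the paper's Steps~1 and~2, but the final matching step, which is the heart of the matter, is not established. Once the interpolating sections $\gamma_i\colon \bA^1\to\bP^2$ are fixed, the extension class $\sigma$ of the family produced by Corollary~\ref{cor:rhsc} is completely determined: it is a nowhere-vanishing section of $\wedge^2\sN_{X/Y}\otimes\sL^*|_Y\cong\sO_Y$, and since $Y$ is a disjoint union of affine lines such a section is componentwise constant, hence pinned down by the boundary condition $\sigma|_{t=0}=\sigma_0$. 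You therefore land at $t=1$ on a specific bundle determined by $(Y_1,\sigma_1)$ with $\sigma_1$ the rigid transport of $\sigma_0$, and there is no mechanism in your proposal to adjust $\sigma_1$ to agree with the Koszul extension class of $\sO(N)\oplus\sO(N+c_1)$. The assertion that, as $s_0$ and $(f,g)$ vary, the two classes sweep overlapping open subsets of $(k^*)^m/k^*$ is unsubstantiated: for a fixed subscheme the extension class is essentially determined by the bundle, the parameter spaces $H^0(\sE(N))$ and $H^0(\sO(N))\oplus H^0(\sO(N+c_1))$ are of roughly the same dimension $\approx N^2$ as the torus while most of their degrees of freedom are spent determining the subscheme, and no dominance is verified. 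Chaining further direct $\bA^1$-concordances of the same type adds no new freedom.

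The paper's proof resolves exactly this issue by invoking Strømme's monad family, Construction~\ref{cons:ex1}, which your argument omits. After moving $Y$ to a complete intersection $C\cap D$ (Step~2), the paper runs Construction~\ref{cons:ex1} starting from the split bundle $\sF=\sO\oplus\sO(c_1)$ with a section $\tau$ cutting out $Y$ and an auxiliary form $F\in H^0(\bP^2,\sO(2N-c_1))$; the resulting family has $\sF$ at $t=1$ and, at $t=0$, a bundle whose extension class is $F_Y^2\cdot\xi_{\sF}$. Because $k$ is algebraically closed and $Y$ is finite and reduced, one can solve $F_Y^2=\xi_{\sE}/\xi_{\sF}$ in $H^0(Y,\sO_Y)^\times$, and the surjectivity of $H^0(\bP^2,\sO(2N-c_1))\to H^0(Y,\sO(2N-c_1)|_Y)$ (checked via the cohomology vanishing on $C$, which is where the complete-intersection structure of $Y$ is used) extends $F_Y$ to a global $F$ not vanishing on $Y$. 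It is this $F^2$-twist of the extension class that provides the freedom to hit any prescribed target, a freedom that point-motion through Corollary~\ref{cor:rhsc} alone does not supply. To salvage your approach you would either need to import Construction~\ref{cons:ex1}, or else prove that the transports realizable by varying the paths $\gamma_i$ are surjective onto $k^*$ at each point --- a nontrivial claim that your proposal neither states nor proves.
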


A proof of Theorem~\ref{thm:concordanceclass} is given in
Section~\vref{sec:pf:thm:concordanceclass}.

\subsection{Explicit construction of \texorpdfstring{$\bA¹$}{A\textonesuperior}-concordances}
\label{ssec:explicitC}

To prepare for the proof Theorem~\ref{thm:concordanceclass}, we aim to refine
the results of Proposition~\ref{prop:explicit1-x} to statements about
$\bA¹$-concordance.  The proof of Proposition~\ref{prop:explicit1-x} made use of
the irreducibility of the moduli spaces $M(d)$, as well as deformations that
connect bundles in $M(d;e) ⊆ M(d)$ to bundles in $M(e)$.  The deformations that
go from $M(d;e)$ to $M(e)$ are explicitly described in Strømme's paper,
\cite[Sect.~4]{stroemme}, and are easily seen to be $\bA¹$-concordances.  For
the reader's convenience, we briefly recall their construction.

\begin{construction}\label{cons:ex1}
  Fix the following data.
  \begin{enumerate}
  \item A rank-two vector bundle $\sF$ on $\bP²$ with Chern classes $c_1, c_2 ∈
    \bZ$ and splitting type $e ≥ -1$.
  \item An integer $d>e$ and a global section $τ ∈ H^0\bigl( \bP²,\, \sF(d-c_1)
    \bigr)$ that vanishes in a codimension-two subscheme $Y ⊆ \bP²$.
  \item A section $F ∈ H^0\bigl(\bP²,\, \sO(2d-c_1)\bigr)$ whose zero locus is
    disjoint from $Y$.
  \end{enumerate}
  We obtain a sequence
  $$
  0 → \sO_{\bP²} \xrightarrow{τ} \sF(d-c_1) → \sI_Y \otimes \sO_{\bP²}(2d-c_1) → 0
  $$
  and an associated extension class $\xi ∈ \Ext¹ \bigl(\sI_Y,\,
  \sO_{\bP²}(c_1-2d)\bigr)$.

  To continue with the construction, consider the standard projection $π :
  \bP²⨯\bA¹ → \bP²$, consider the map
  \[
  τ \wedge (-) : \sF → \underbrace{(\wedge² \sF)(d-c_1)}_{\cong \sO_{\bP²}(d)}
  \qquad σ \mapsto τ \wedge σ,
  \]
  and choose a coordinate $T$ on $\bA¹$.  We obtain the following monad of
  vector bundles on $\bP²⨯\bA¹$,
  \[
  π^* \sO(c_1-d)\xrightarrow{b := (T \cdot \Id,τ,-F)^t} π^* \sO(c_1-d) \oplus
  π^* \sF \oplus π^* \sO(d) \xrightarrow{a := (F,τ\wedge(-),T \cdot \Id)} π^*
  \sO(d).
  \]
  The vanishing loci of $τ$ and $-F$ are disjoint by assumption.  This implies
  that $b$ is injective and that $a$ is surjective.  It also implies that both
  maps have constant rank, so that the cohomology sheaf $\sC$ is locally free.
  Given $i∈\{0,1\}$, let $ι_i : \bP²⨯\{i\} → \bP²⨯\bA¹$ be the corresponding
  inclusion.  The bundle $\sC$ provides an $\bA¹$-concordance from the bundle
  $\sF_0 \cong ι_0^*(\sC)$ to the bundle $\sF_1 := ι_1^*(\sC)$.  Strømme proves
  in \cite[Prop.~4.3]{stroemme} that the following properties hold.
  \begin{enumerate}
  \item The bundle $\sF_1$ is isomorphic to $\sF$.
  \item The bundle $\sF_0$ has splitting type $d$.  The Chern classes of $\sF_0$
    and $\sF_1$ agree.
  \item\label{il:w3} The bundle $\sF_0$ appears in a sequence
    $$
    0 → \sO_{\bP²} → \sF_0(-d) → \sI_Y \otimes \sO_{\bP²}(c_1-2d) → 0.
    $$
    whose extension class is the image of $\xi$ under the map
    $$
    F²\cdot(-): \Ext¹ \bigl( \sI_Y,\, \sO_{\bP²}(c_1-2d) \bigr) → \Ext¹
    \bigl(\sI_Y,\, \sO_{\bP²}(2d-c_1) \bigr).
    $$
  \end{enumerate}
\end{construction}

\begin{rem}[Explanation of $F²\cdot(-)$]\label{rem:discextcls}
  Using the Hartshorne-Serre correspondence, we obtain the following diagram,
  which gives an elementary description of the map $F²\cdot(-)$ that appears in
  Item~\ref{il:w3} above,
  $$
  \xymatrix{ %
    \Ext¹ \bigl( \sI_Y,\, \sO_{\bP²}(c_1-2d) \bigr) \ar@{^(->}[d]_{\text{Hartshorne-Serre, Fact~\ref{fact:arrondo1}}} \ar[r]^{F²\cdot(-)} & \Ext¹ \bigl(\sI_Y,\,\sO_{\bP²}(2d-c_1) \bigr) \ar@{^(->}[d]^{\text{Hartshorne-Serre, Fact~\ref{fact:arrondo1}}}\\
    H^0 \bigl( Y,\, \wedge² \sN_{X/Y}\otimes \sO_{\bP²}(c_1-2d)|_Y \bigr)
    \ar[r]_{F²\cdot(-)} & H^0 \bigl( Y,\, \wedge² \sN_{X/Y}\otimes
    \sO_{\bP²}(2d-c_1)|_Y \bigr).  }
  $$
  The vertical arrows are injective since $h¹ \bigl( X,\, \sO_{\bP²}(c_1-2d)
  \bigr) = h¹ \bigl( X,\, \sO_{\bP²}(2d-c_1) \bigr) = 0$.
\end{rem}

\subsection{Proof of Theorem~\ref*{thm:concordanceclass}}
\label{sec:pf:thm:concordanceclass}

Let $\sE$ be a rank-two vector bundle on $\bP²$ with arbitrary $c_1$ and
vanishing $c_2=0$, as in the formulation of Theorem~\ref{thm:concordanceclass}.
We aim to deform $\sE$ to the split bundle $\sO_{\bP²}(c_1) \oplus \sO_{\bP²}$.
To this end we produce three chains of $\bA¹$-concordances that together give
the desired deformation.

\subsubsection*{Step 1: Increasing the splitting type}

First, we fix an integer $N \gg 0$ satisfying the following three conditions:
the sheaf $\sE(N-1)$ is globally generated, $N > c_1$ and $2N - c_1 > 0$.  We
deform $\sE$ to a bundle with the same Chern classes and splitting type $N$.  By
the Bertini theorem \ref{fact:bertini}, the first hypothesis on $N$ guarantees
that the vanishing locus of a generic section of $\sE(N)$ is smooth; fix a
section $τ ∈ H^0 \bigl( \bP²,\,\sE(N) \bigr)$ whose vanishing locus $Y$ is a
zero-dimensional, smooth scheme.  Applying Construction~\ref{cons:ex1}, we
obtain an $\bA¹$-concordance from $\sE$ to a bundle $\sE_0$ with Chern classes
$c_1$ and $0$ and splitting type $N$.  Sequence~\ref{il:w3} then yields that
$$
0 = c_2(\sE_0) = c_1 \bigl( \sO_{\bP²}(N) \bigr) \cdot c_1 \bigl(
\sO_{\bP²}(c_1-N) \bigr) + c_2(\sI_Y) \quad \Rightarrow \quad \#Y = N \cdot
(N-c_1).
$$

Replacing $\sE$ by $\sE_0$, we are free to make the following assumption for the
remainder of the present proof.

\begin{asswlog}\label{asswlog:x1}
  If $N$ denotes the splitting type of $\sE$, then $N > c_1$ and $2N - c_1 > 0$.
  Further, there exists a reduced, finite subscheme $Y ⊂ \bP²$ of length $N
  \cdot (N-c_1)$, a section $s ∈ H^0 \bigl( \bP², \, \sE(-N) \bigr)$ vanishing
  precisely on $Y$ and giving rise to an exact sequence
  \begin{equation}\label{eq:sdcvb}
    0 → \sO_{\bP²} \xrightarrow{s} \sE(-N) → \sI_Y \otimes \sO_{\bP²}(c_1-2\cdot N) → 0.
  \end{equation}
\end{asswlog}

\subsubsection*{Step 2: Deforming the subscheme}

Next, we aim to deform the bundle $\sE$ to a new bundle with the same Chern
classes and splitting type, but for which the associated zero-dimensional
subscheme is the intersection of two curves of appropriate degree.  To this end,
we apply the extension result for vector bundles, Corollary~\ref{cor:rhsc},
which we obtained as a corollary to the Hartshorne-Serre correspondence.  More
precisely, fix a pair of smooth curves $C$ and $D$ of degrees $N$ and $N-c_1$,
respectively, intersecting in a reduced subscheme $Y_1$ consisting of $N(N-c_1)$
distinct points.  Write $X = \bP² ⨯ \bA¹$ and choose a subvariety $Y_X ⊂ X$ that
is the union of $N(N-c_1)$ pairwise disjoint sections, with $Y_X|_{\bP²⨯\{0\}} =
Y$ and $Y_X|_{\bP²⨯\{1\}} = Y_1$.  Corollary~\ref{cor:rhsc} will then allow to
find an $\bA¹$-concordance between $\sE$ and a bundle $\sE_1$ that has the
splitting type and Chern classes of $\sE$, and fits into an exact sequence,
\begin{equation}\label{eq:h788}
  0 → \sO_{\bP²} \xrightarrow{s} \sE_1(-N) → \sI_{Y_1} \otimes \sO_{\bP²}(c_1-2\cdot N) → 0.
\end{equation}
Sequence~\eqref{eq:h788} immediately implies that the splitting type of $\sE_1$
is $N$.  Replacing $\sE$ by $\sE_1$, we are free to make the following
assumption for the remainder of the present proof.

\begin{asswlog}\label{asswlog:x2}
  In addition to the assumptions made in \ref{asswlog:x1}, we can further assume
  the reduced scheme $Y$ is the intersection of two smooth curves, say
  $C$ and $D$, of degrees $N$ and $N-c_1$, respectively.
\end{asswlog}

The concluding Step~3 of this proof discusses the bundle $\sE$ in the context of
Construction~\ref{cons:ex1}.  To fix the necessary notation, we briefly discuss
the extension class of Sequence~\eqref{eq:sdcvb}, which determines the
isomorphism class of $\sE$.  The assumptions on the integer $N$ guarantee that
the following cohomology groups vanish:
\begin{equation}\label{eq:birne}
  H¹ \bigl( X,\, \sO_{\bP²}(2N-c_1) \bigr) = 0 \quad\text{and}\quad H² \bigl( X,\, \sO_{\bP²}(2N-c_1) \bigr) = 0.
\end{equation}
Using the Hartshorne-Serre correspondence of Fact~\ref{fact:arrondo1} and
Remark~\ref{ram:arrondo2}, the splitting type of Sequence~\eqref{eq:sdcvb} is
thus identified with an element
\begin{equation}\label{eq:apfel}
  \xi_{\sE} ∈ H^0 \bigl( \bP²,\, \wedge² \sN_{X/Y}\otimes \sO_{\bP²}(2N-c_1)|_Y \bigr) \cong \Ext¹ \bigl( \sI_Y,\, \sO_{\bP²}(2N-c_1) \bigr)
\end{equation}
that will later become important.

\subsubsection*{Step 3: Deforming to a split bundle}

Generalizing \cite[Rem.~4.6]{stroemme}, we show that $\sE$ can be deformed to
the split bundle $\sO_{\bP²} \oplus \sO_{\bP²}(c_1)$.  We begin by showing that
the restriction map $r: H^0 \bigl( \bP², \sO_{\bP²}(2N-c_1) \bigr) → H^0 \bigl(
Y, \sO_{\bP²}(2N-c_1)|_Y \bigr)$ is surjective.  To this end, factor $r$ as
follows:
$$
H^0 \bigl( \bP²,\, \sO_{\bP²}(2N-c_1) \bigr) \xrightarrow{r_1} H^0 \bigl( C,\,
\sO_{\bP²}(2N-c_1)|_C \bigr) \xrightarrow{r_2} H^0 \bigl( Y,\,
\sO_{\bP²}(2N-c_1)|_Y \bigr).
$$
Surjectivity of $r_1$ follows by noting that its cokernel is controlled by $H¹
\bigl( \bP², \sO_{\bP²}(N-c_1) \bigr) = 0$.  Surjectivity of $r_2$ follows by
noting that
\begin{align*}
  H¹ \bigl( C,\, \sO_{\bP²}(2N-c_1)|_C \otimes \sI_{Y} \bigr) & \cong H¹ \bigl( C,\, \sO_C((2N-c_1)N-N(N-c_1)) \bigr) \\
  & = H¹ \bigl( C,\, \sO_C(N²) \bigr) \\
  & \cong H^0 \bigl( C,\, ω_C \otimes \sO_C(-N²) \bigr)^* \\
  & \cong H^0 \bigl( C,\, \sO_C(-3N+N²-N²) \bigr)^* \\
  & = H^0 \bigl( C,\, \sO_C(-3N) \bigr)^* = 0.
\end{align*}
We now appeal to Construction~\ref{cons:ex1} to produce an $\bA¹$-concordance
between $\sE$ and the bundle $\sF = \sO_{\bP²}(c_1)\oplus \sO_{\bP²}$.  To this
end, set $d := N$, and choose a section
\begin{multline*}
  τ ∈ H^0\bigl( \bP², \sF(N-c_1) \bigr) = H^0\bigl( \bP², \sO_{\bP²}(N)\oplus\sO_{\bP²}(N-c_1)\bigr) \\
  = H^0\bigl( \bP², \sO_{\bP²}(N)\bigr) \oplus H^0\bigl( \bP², \sO_{\bP²}(N-c_1)\bigr)
\end{multline*}
associated with the pair of curves $(C,D)$.  The section $τ$ vanishes precisely
on $Y$ and gives rise to an exact sequence
\begin{equation}\label{eq:gfkh54}
  0 → \sO_{\bP²} \xrightarrow{τ} \sF(N-c_1) → \sI_Y \otimes \sO_{\bP²}(2N-c_1) → 0.
\end{equation}
Using the Hartshorne-Serre correspondence, Fact~\ref{fact:arrondo1}, the
extension class $\zeta_{\sF}$ associated with \eqref{eq:gfkh54} yields an
element
$$
\xi_{\sF} ∈ H^0 \bigl( \bP²,\, \wedge² \sN_{X/Y}\otimes \sO_{\bP²}(c_1-2N)|_Y \bigr).
$$
The characterization of locally frees given in Theorem~\ref{thm:arrondo2}
asserts that both $\xi_{\sE}$ and $\xi_{\sF}$ are nowhere-vanishing on $Y$.
Since $Y$ is finite, we can thus find a section $F_Y ∈ H^0 \bigl( Y,
\sO_{\bP²}(2N-c_1)|_Y \bigr)$ such that
\begin{equation}\label{eq:hj7}
  \xi_{\sE} = F_Y²\cdot \xi_{\sF}.
\end{equation}
Surjectivity of the restriction map $r$ allows us to extend $F_Y$ to a section
$F ∈ H^0 \bigl( \bP², \sO_{\bP²}(2N-c_1) \bigr)$ with vanishing locus disjoint
from $Y$.  Feeding these data into Construction~\ref{cons:ex1}, we obtain an
$\bA¹$-concordance between $\sF$ and a bundle $\sF_0$ that appears in a
sequence
\begin{equation}\label{eq:fgfgh8}
  0 → \sO_{\bP²} → \sF_0(-N) → \sI_Y \otimes \sO_{\bP²}(c_1-2N) → 0,
\end{equation}
and whose extension class is $F² \cdot \zeta_{\sF}$.  To identify this class,
recall the diagram of Remark~\ref{rem:discextcls}, which reads in our context as
follows:
$$
\xymatrix{ %
  \Ext¹ \bigl( \sI_Y,\, \sO_{\bP²}(c_1-2N) \bigr) \ar@{^(->}[d] \ar[rr]^{F²\cdot(-)} && \Ext¹ \bigl(\sI_Y,\,\sO_{\bP²}(2N-c_1) \bigr) \ar@{<->}[d]^{\text{isom.~by \eqref{eq:birne}, \eqref{eq:apfel}}}\\
  H^0 \bigl( Y,\, \wedge² \sN_{X/Y}\otimes \sO_{\bP²}(c_1-2N)|_Y \bigr) \ar[rr]_{F²\cdot(-)} && H^0 \bigl( Y,\, \wedge² \sN_{X/Y}\otimes \sO_{\bP²}(2N-c_1)|_Y \bigr).
}
$$
The upper horizontal morphism maps the extension class $\zeta_{\sF}$ of
\eqref{eq:gfkh54} to the extension class of \eqref{eq:fgfgh8}.  On the other
hand, it follows by construction of $F$ that the lower horizontal morphism maps
$\xi_{\sF}$ to $\xi_{\sE}$, the latter being induced by
Sequence~\eqref{eq:sdcvb}.  Since the vertical arrow on the right is an
isomorphism, we obtain that the extension classes of \eqref{eq:fgfgh8} and
\eqref{eq:sdcvb} agree.  In summary, we have seen that $\sF_0 \cong \sE$, which
concludes the proof of Theorem~\ref{thm:concordanceclass}.  \qed

\subsection{Concluding remarks}

We want to make a remark on the geometry of the moduli spaces $M(d)$ and why
they fail to be $\bA¹$-chain connected.  As in \cite{banica}, the space $M(d)$
is fibered over the Hilbert scheme of local complete intersections of
codimension two in $\bP²$.  The fibers are complements of hyperplane
arrangements in projective spaces $\bP H^0(Y,\sO_Y)$, typically isomorphic to
products of $\bG_m$.  It is possible to produce explicit deformations of lci
subschemes, cf.~the proof of Theorem~\ref{thm:concordanceclass}, to show that
the Hilbert scheme of lci subschemes of codimension two in $\bP²$ is
$\bA¹$-chain connected.  However, the fibers are generally not
$\bA¹$-chain-connected.  Any morphism $\bA¹ → M(d)$ for $d≥ 2$ is a deformation
of the underlying lci subscheme equipped with a constant section.  Vector
bundles differing only in the extension class and not in the subscheme cannot be
connected by an $\bA¹$-chain inside $M(d)$.

It is this subtle geometry of the moduli spaces $M(d)$ that prevents us from
giving a complete concordance classification of rank-two bundles.  While we do
not expect the moduli spaces $M(d)$ to be $\bA¹$-chain connected, there might
exist chains of $\bA¹$-concordances through higher splitting types which connect
bundles that are not $\bA¹$-concordant through bundles of type $d$.  We were not
able to settle this question except in the case of ``topologically split''
bundles presented here.

%
%

\section{\texorpdfstring{$\bA¹$-$h$}{A\textonesuperior-h}-cobordism classification of \texorpdfstring{$\bP¹$}{P\textonesuperior}-bundles over \texorpdfstring{$\bP²$}{P\texttwosuperior}}
\label{sec:06}

In this section, we discuss the classification of $\bP¹$-bundles over $\bP²$ up
to $\bA¹$-$h$-cobordism.  On the one hand, the non-deformability results of
Strømme allow to show that there exist many $\bP¹$-bundles which can not be
connected by direct $\bA¹$-$h$-cobordisms.  On the other hand, the explicit
$\bA¹$-concordances produced in Section~\ref{sec:05} allow us to establish that
any $\bP¹$-bundle deformable to the projectivization of a split vector bundle is
in fact already $\bA¹$-$h$-cobordant to the split bundle.  This provides an
$\bA¹$-$h$-cobordism classification for certain $\bP¹$-bundles, and also
exhibits how far direct $\bA¹$-$h$-cobordism is from being an equivalence
relation.

\subsection{Preliminaries on \texorpdfstring{$\bA¹$-$h$}{A\textonesuperior-h}-cobordisms}

For the reader's convenience, we briefly recall the definition of an
$\bA¹$-$h$-cobordism.

\begin{defn}[\protect{$\bA¹$-$h$-cobordism, \cite[Def.~3.1.1]{AM}}]\label{def:a1hc}
  Given two smooth, proper abstract varieties $X_0$ and $X_1$, an
  $\bA¹$-$h$-cobordism between $X_0$ and $X_1$ is a proper, surjective morphism
  of smooth abstract varieties, $f: \hcob{X} → \bA¹$, such that the following
  holds.
  \begin{enumerate}
  \item The fibers $f^{-1}(0)$ and $f^{-1}(1)$ are isomorphic to $X_0$ and
    $X_1$, respectively.
  \item\label{il:a1we} The natural closed immersions $i_0: X_0 → \hcob{X}$ and
    $i_1: X_1 → \hcob{X}$ are $\bA¹$-weak equivalences.
  \end{enumerate}
  We say that $X_0$ and $X_1$ are directly $\bA¹$-$h$-cobordant if there exists
  an $\bA¹$-$h$-cobordism between $X_0$ and $X_1$.  We say that $X_0$ and $X_1$
  are $\bA¹$-$h$-cobordant if they are equivalent under the equivalence relation
  generated by direct $\bA¹$-$h$-cobordisms.
\end{defn}

\begin{rem}[Smoothness of the cobordism map]\label{rem:a1hcr1}
  In the setting of Definition~\ref{def:a1hc}, we conclude as in
  Remark~\ref{rem:2} that $f$ is smooth over a Zariski-open neighborhood $V ⊆
  \bA¹$ that contains $0$ and $1$.
\end{rem}

\begin{rem}[Restrictions of Picard groups]
  Maintaining the assumptions of Remark~\ref{rem:a1hcr1}, write $\hcob{X}_V :=
  f^{-1}(V)$.  We obtain a commutative diagram of groups,
  $$
  \xymatrix{%
    \bigl[ \hcob{X},\, B\bG_m \bigr]_{\bA¹} \ar@{<->}[r]^\cong \ar@{<->}[d]_{\cong} & \bigl[ X_0,\, B\bG_m \bigr]_{\bA¹} \ar@{<->}[d]^\cong\\
    \Pic(\hcob{X}) \ar[r]_{\text{restriction}} & \Pic(X_0).
  }
  $$
  The isomorphism on top follows from Assumption~\ref{il:a1we}.  The left and
  right bijections follow from smoothness of $\hcob{X}$ and $X_0$, respectively.
  In particular, the natural restriction map $\Pic(\hcob{X}) → \Pic(\hcob{X}_V)
  → \Pic(X_0)$ is bijective, and the restriction $\Pic(\hcob{X}_V)→ \Pic(X_0)$
  is surjective.  The same holds for restrictions to $X_1$.
\end{rem}

\subsection{\texorpdfstring{$\bA¹$-$h$}{A\textonesuperior-h}-cobordism of projective bundles}

Given any two $\bA¹$-concordant vector bundles over a smooth projective variety
$X$, we will show in Lemma~\ref{lem:concob} that the associated projectivized
bundles are $\bA¹$-$h$-cobordant.  At the moment, this is the only source of
$\bA¹$-$h$-cobordisms between projective bundles at our disposal.  As an
immediate corollary, we obtain in Proposition~\ref{prop:cobordismclass} an
$\bA¹$-$h$-cobordism classification of those projective bundles that are
deformable to split bundles.

\begin{lem}[Construction of direct $\bA¹$-$h$-cobordisms from concordances]\label{lem:concob}
  Assume $X$ is a smooth projective abstract variety.  Let $\sE → X ⨯ \bA¹$ be a
  direct $\bA¹$-concordance between vector bundles $ι_0^* \sE$ and $ι_1^*\sE$.
  Then, projectivization induces a direct $\bA¹$-$h$-cobordism
  $$
  \xymatrix{ %
    \bP_{X⨯\bA¹}(\sE) \ar[r]^g & X⨯\bA¹ \ar[r] & \bA¹
  }
  $$
  between the projective bundles $\bP_{X}(ι_0^*\sE)$ and $\bP_{X}(ι_1^*\sE)$.
\end{lem}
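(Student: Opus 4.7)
The plan is to verify the three conditions of Definition~\ref{def:a1hc} for the composite morphism $f: \bP_{X \times \bA^1}(\sE) \to X \times \bA^1 \to \bA^1$. Smoothness of the total space and properness/surjectivity of $f$ are straightforward: $\bP_{X \times \bA^1}(\sE)$ is smooth because projectivization of a vector bundle preserves smoothness, and $f$ is proper as the composition of the projection from a projective bundle with the projection $X \times \bA^1 \to \bA^1$, which is proper since $X$ is projective. The identification $f^{-1}(i) \cong \bP_X(\iota_i^*\sE)$ for $i = 0, 1$ is immediate from compatibility of projective bundles with base change, see \cite[Prop.~3.5.3]{EGA2}.

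The substantive step is to show that the closed immersions $\iota_i^{\bP}: \bP_X(\iota_i^* \sE) \hookrightarrow \bP_{X\times\bA^1}(\sE)$ are $\bA^1$-weak equivalences. My strategy is to reduce to a smooth affine base via a Jouanolou device, then apply Lindel's theorem. Choose a Jouanolou device $\pi: X' \to X$, that is, a torsor under a vector bundle on $X$ with $X'$ smooth affine, which is in particular an $\bA^1$-weak equivalence by \cite[Prop.~4.4]{WeibelHAK}. The induced morphism $\pi' := \pi \times \text{id}_{\bA^1}: X' \times \bA^1 \to X \times \bA^1$ is then a torsor under the pullback vector bundle, and hence also an $\bA^1$-weak equivalence.

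Pull back $\sE$ to $\sE' := (\pi')^* \sE$ on $X' \times \bA^1$. Since $X' \times \bA^1$ is smooth affine, the theorem of Lindel on the Bass--Quillen conjecture guarantees that $\sE'$ is extended from $X'$; concretely, writing $\iota_i'$ for the inclusion $X' \hookrightarrow X' \times \bA^1$ and $p$ for the projection $X' \times \bA^1 \to X'$, there is an isomorphism $\sE' \cong p^*(\iota_i'^* \sE')$, so that $\bP_{X' \times \bA^1}(\sE') \cong \bP_{X'}(\iota_i'^* \sE') \times \bA^1$. The inclusion of $\bP_{X'}(\iota_i'^* \sE') \times \{i\}$ into this product is visibly an $\bA^1$-weak equivalence. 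Assembling everything into the cartesian square
$$
\xymatrix{
\bP_{X'}(\iota_i'^* \sE') \ar[r] \ar[d] & \bP_{X' \times \bA^1}(\sE') \ar[d] \\
\bP_X(\iota_i^* \sE) \ar[r]^{\iota_i^{\bP}} & \bP_{X \times \bA^1}(\sE),
}
$$
whose vertical morphisms are torsors under vector bundles (pullbacks of $\pi$'s defining vector bundle) and hence $\bA^1$-weak equivalences, the 2-out-of-3 property of weak equivalences forces $\iota_i^{\bP}$ to be an $\bA^1$-weak equivalence as well, completing the argument.

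The main obstacle is precisely this last reduction: the concordance $\sE$ is, in general, a genuinely non-trivial deformation that does not descend to $X$, so the Jouanolou-device-plus-Lindel manoeuvre is essential to ``straighten out'' the bundle in the $\bA^1$-direction and bring it into a form where the inclusion of the central fiber is manifestly an $\bA^1$-weak equivalence. All remaining verifications are either formal or routine base-change computations.
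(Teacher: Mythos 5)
Your proof is correct, but it takes a different route from the paper's. The paper applies a ready-made gluing criterion, \cite[Prop.~3.1.5]{AM} (take $Y=\bP^1$, $Z=\bP_{X\times\bA^1}(\sE)$), reducing the claim to verifying a local-triviality condition (LT) over an affine Zariski cover of $X$; Quillen--Suslin/Lindel is invoked there to show that the bundle restricted to $V_i\times\bA^1$ is extended from $V_i$, which after a further refinement of the cover gives local triviality and hence (LT). You instead verify Definition~\ref{def:a1hc}\ref{il:a1we} directly: pass to a Jouanolou device $\pi\colon X'\to X$, use Lindel on the smooth affine scheme $X'\times\bA^1$ to trivialize the concordance in the $\bA^1$-direction, observe that the fiber inclusion upstairs is visibly an $\bA^1$-weak equivalence, and then conclude by the 2-out-of-3 property applied to the cartesian square whose vertical arrows are torsors under vector bundles. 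Both proofs have Lindel's theorem as the crucial ingredient; the paper's approach offloads the local-to-global patching onto \cite{AM}, while yours performs an equivalent global straightening via Jouanolou, which is somewhat more self-contained at the cost of one extra cartesian-square bookkeeping argument. One small point worth stating explicitly in your write-up: the isomorphism $\sE'\cong p^*\sF$ supplied by Lindel forces $\sF\cong\iota_i'^*\sE'$ via $p\circ\iota_i'=\mathrm{id}$, and compatibility of $\sProj$ with base change then identifies your top arrow with the inclusion $\bP_{X'}(\sF)\times\{i\}\hookrightarrow\bP_{X'}(\sF)\times\bA^1$, which is what makes it an $\bA^1$-weak equivalence.
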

\begin{proof}
  This is a special case of \cite[Prop.~3.1.5]{AM}, with $Y = \bP¹$, $Z =
  \bP_{X⨯\bA¹}(\sE)$, and $U = \bigsqcup U_i$ a suitable open affine cover of
  $X$.  To apply the result, we need to check condition (LT) in loc.cit.
  Starting with an arbitrary open, affine cover $V_i → X$ of $X$, the pullback
  of $g$ along $v_i⨯\Id : V_i⨯\bA¹ → X⨯\bA¹$ has the form $g_{v_i} :
  \bP_{V_i⨯\bA¹}(\sE|_{V_i⨯\bA¹}) → V_i⨯\bA¹$.  In other words, $g_{v_i}$ is the
  projectivization of a rank-two vector bundle over the affine scheme
  $V_i⨯\bA¹$.  By the homotopy invariance results of Quillen-Suslin
  \cite{quillen} and Lindel \cite{lindel}, any such vector bundle is the
  pullback of a rank-two vector bundle from $V_i$.  We can then further refine
  $V$ to a covering $U$ of $X$ such that the restriction of $\sE$ to $U⨯\bA¹$ is
  in fact the trivial rank-two bundle.  For the Zariski covering $U$ of $X$, the
  condition (LT) in \cite[Prop.~3.1.5]{AM} is satisfied, which proves the claim.
\end{proof}

\begin{prop}[$\bA¹$-$h$-cobordism classification of projective bundles deformable to split ones]\label{prop:cobordismclass}
  Fix integers $c_1$ and $c_2$, and assume that there exists $d ∈ \bN$ such that
  $d²-dc_1+c_2=0$.  Let $\sE_1$ and $\sE_2$ be two rank-two bundles on $\bP²$
  with Chern classes $c_1$ and $c_2$.  Then $\bP_{\bP²}(\sE_1)$ and
  $\bP_{\bP²}(\sE_2)$ are $\bA¹$-$h$-cobordant.
\end{prop}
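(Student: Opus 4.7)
The plan is to reduce to the hypothesis of Theorem~\ref{thm:concordanceclass} by twisting, and then convert the resulting $\bA^1$-concordances into $\bA^1$-$h$-cobordisms via Lemma~\ref{lem:concob}.

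First, I would exploit the projective invariance under twisting: for any line bundle $\sL$ on $\bP^2$, the projectivizations $\bP_{\bP^2}(\sE_i)$ and $\bP_{\bP^2}(\sE_i \otimes \sL)$ are canonically isomorphic. Choosing $\sL = \sO_{\bP^2}(-d)$ and using the multiplicativity of Chern classes, the twisted bundle $\sE_i(-d)$ has Chern classes
\[
c_1\bigl(\sE_i(-d)\bigr) = c_1 - 2d, \qquad c_2\bigl(\sE_i(-d)\bigr) = c_2 - d\,c_1 + d^2 = 0,
\]
the last equality being exactly the hypothesis. Hence, after replacing $\sE_i$ by $\sE_i(-d)$, I may assume without loss of generality that both vector bundles have vanishing second Chern class and first Chern class $c_1 - 2d$.

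Next, Theorem~\ref{thm:concordanceclass} applies in this situation and yields a chain of direct $\bA^1$-concordances connecting each $\sE_i$ to the split bundle $\sO_{\bP^2} \oplus \sO_{\bP^2}(c_1 - 2d)$; in particular, $\sE_1$ and $\sE_2$ are $\bA^1$-concordant. Applying Lemma~\ref{lem:concob} to each directly concordant pair in this chain then produces a corresponding chain of direct $\bA^1$-$h$-cobordisms between the associated projective bundles. Since $\bA^1$-$h$-cobordism is by definition an equivalence relation generated by direct $\bA^1$-$h$-cobordisms, concatenating these gives the desired $\bA^1$-$h$-cobordism from $\bP_{\bP^2}(\sE_1(-d))$ to $\bP_{\bP^2}(\sE_2(-d))$, which by the invariance noted at the outset is the same as an $\bA^1$-$h$-cobordism from $\bP_{\bP^2}(\sE_1)$ to $\bP_{\bP^2}(\sE_2)$.

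There is no real obstacle here; the content of the proposition is concentrated in the two earlier results being assembled. The only subtle point is the bookkeeping that Lemma~\ref{lem:concob} converts \emph{direct} concordances (not concordances in general) into direct $\bA^1$-$h$-cobordisms, so one must iterate along the chain produced by Theorem~\ref{thm:concordanceclass} rather than hoping for a single universal concordance; this is purely formal given that both relations are defined as the equivalence closures of their direct versions.
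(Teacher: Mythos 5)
Your proof is correct and follows essentially the same route as the paper: twist by $\sO_{\bP^2}(-d)$ to land in the hypotheses of Theorem~\ref{thm:concordanceclass}, then convert the resulting chain of direct $\bA^1$-concordances into direct $\bA^1$-$h$-cobordisms via Lemma~\ref{lem:concob}. You have in fact also correctly computed $c_1(\sE_i(-d)) = c_1 - 2d$, whereas the paper's proof states $c_1 - d$ — a harmless typo there; and whether one restores the original Chern classes by tensoring the concordance chain with $\sO(d)$ as the paper does, or simply invokes the canonical isomorphism $\bP_{\bP^2}(\sE) \cong \bP_{\bP^2}(\sE \otimes \sL)$ at the end as you do, is an immaterial difference.
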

\begin{proof}
  It suffices to show that the projectivization $\sE_1$ is $\bA¹$-$h$-cobordant
  to the projectivization of a split bundle.  Under the hypothesis on the Chern
  classes, the bundle $\sE_1(-d)$ has first Chern class $c_1-d$ and second Chern
  class $0$.  As a consequence, Theorem~\ref{thm:concordanceclass} shows that
  $\sE_1(-d)$ is $\bA¹$-concordant to the split bundle $\sO \oplus \sO(c_1-d)$.
  Tensoring the chain of $\bA¹$-concordances guaranteed by
  Theorem~\ref{thm:concordanceclass} with $\sO(d)$, one obtains a chain of
  $\bA¹$-concordances between $\sE_1$ and a split bundle.  Applying
  Lemma~\ref{lem:concob} to these $\bA¹$-concordances provides the required
  chain of $\bA¹$-$h$-cobordisms.
\end{proof}

\subsection{Direct \texorpdfstring{$\bA¹$-$h$}{A\textonesuperior-h}-cobordisms of projective bundles}

While Proposition~\ref{prop:cobordismclass} might suggest that there are large
classes of projectivized vector bundles that are all $\bA¹$-$h$-cobordant, the
following theorem asserts that few are in fact directly $\bA¹$-$h$-cobordant.

\begin{thm}[Non-existence of direct $\bA¹$-$h$-cobordisms]\label{thm:a1cobx}
  Fix integers $c_1, c_2 ∈ \bZ$.  There are infinitely many rank-two vector
  bundles $(\sE_j)_{j ∈ \bN}$ on $\bP²$ with Chern classes $c_1$ and $c_2$ such
  that no two of the projectivizations $\bP_{\bP²}(\sE_j)$ are directly
  $\bA¹$-$h$-cobordant.
\end{thm}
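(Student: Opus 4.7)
The plan is to combine deformation rigidity (Theorem~\ref{thm:rigidity}) with Strømme's non-deformability results (Proposition~\ref{prop:stroemmeX}, Observation~\ref{obs:num}) to force any directly $\bA^1$-$h$-cobordant pair of projective bundles among our collection to share their generic splitting type. Twisting does not change the projectivisation, so we may normalise $c_1 \in \{0,-1\}$ as in Setting~\ref{setting:stroemme}. By Observation~\ref{obs:num}, pick an infinite increasing sequence $d_1 < d_2 < \cdots$ of integers so large that each $M(d_j)$ is nonempty and $M(d_j;e) \subsetneq M(d_j)$ is a proper subset for every $-1 \leq e < d_j$. For each $j$, Proposition~\ref{prop:stroemmeX} produces a rank-two bundle $\sE_j$ on $\bP^2$ of type $d_j$ and Chern classes $(c_1,c_2)$ that is not deformable over an irreducible base to any smaller type.

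Suppose for contradiction that $f : \hcob{X} \to \bA^1$ is a direct $\bA^1$-$h$-cobordism with fibres $\bP_{\bP^2}(\sE_i)$ at $0$ and $\bP_{\bP^2}(\sE_j)$ at $1$, for some $i \ne j$. Since $\hcob{X}$ is smooth and $X_0, X_1 \hookrightarrow \hcob{X}$ are $\bA^1$-weak equivalences, the restriction maps $\Pic(\hcob{X}) \to \Pic(X_0)$ and $\Pic(\hcob{X}) \to \Pic(X_1)$ are bijections, and the hypotheses of Theorem~\ref{thm:rigidity} are satisfied at both endpoints. Applying that theorem at $t = 0$ and at $t = 1$ yields Zariski-open neighbourhoods $U_0 \ni 0$ and $U_1 \ni 1$ together with families of rank-two vector bundles; since every line bundle on an open subset of $\bA^1$ is trivial, the associated $\bP^2$-bundles are trivial, so these families may be viewed as rank-two bundles $\sE_{U_\ell}$ on $\bP^2 ⨯ U_\ell$. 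Twisting each $\sE_{U_\ell}$ by a suitable pull-back $\pi^*\sO_{\bP^2}(a_\ell)$, we arrange that $\sE_{U_0}|_0 \cong \sE_i$ and $\sE_{U_1}|_1 \cong \sE_j$; the Chern classes are then locally constant and therefore equal to $(c_1,c_2)$ on every fibre of each family.

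The family $\sE_{U_0}$ is a deformation of $\sE_i$ over the irreducible base $U_0$. By the non-deformability of $\sE_i$ combined with the upper semicontinuity of splitting type (Fact~\ref{fact:semincontST}), the generic splitting type of $\sE_{U_0}$ equals $d_i$, so there is a dense open $V_0 \subseteq U_0$ on which every fibre has splitting type exactly $d_i$; symmetrically, there is a dense open $V_1 \subseteq U_1$ on which every fibre of $\sE_{U_1}$ has splitting type exactly $d_j$. The overlap $U_0 \cap U_1$ is cofinite in $\bA^1$ and hence nonempty. On this overlap the projectivisations $\bP(\sE_{U_0}|_t)$ and $\bP(\sE_{U_1}|_t)$ both coincide with the fibre of $f$ over $t$, so $\sE_{U_1}|_{U_0 \cap U_1}$ differs from $\sE_{U_0}|_{U_0 \cap U_1}$ by a twist with a line bundle on $\bP^2 ⨯ (U_0 \cap U_1)$; since $\Pic(\bP^2 ⨯ (U_0 \cap U_1)) \cong \bZ$ and both families have the same Chern classes, this twist must be trivial, so $\sE_{U_0}|_t \cong \sE_{U_1}|_t$ for all $t$ in the overlap. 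Evaluating at any $t \in V_0 \cap V_1 \cap U_0 \cap U_1$ (again cofinite, hence nonempty) forces $d_i = d_j$, the desired contradiction.

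The main technical obstacle is the matching step in the second paragraph: the rigidity theorem only pins down the rank-two bundle up to twist by a line bundle from the base, so one must exploit $\Pic(\bP^2) \cong \bZ$ together with the Chern-class normalisation in order to upgrade the tautological projective isomorphism on the overlap to an honest isomorphism of rank-two families. Once this is done, the rest is a formal consequence of upper semicontinuity of splitting type and the fact that any two cofinite subsets of $\bA^1$ intersect.
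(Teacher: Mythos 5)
Your proposal follows the paper's strategy closely (Strømme's non-deformability, deformation rigidity, overlap of neighborhoods on $\bA^1$), but it contains a genuine gap at the crucial ``matching step'' in the second paragraph. You assert that because $\bP(\sE_{U_0}|_t)$ and $\bP(\sE_{U_1}|_t)$ ``both coincide with the fibre of $f$ over $t$'', the families $\sE_{U_0}$ and $\sE_{U_1}$ must differ by a line-bundle twist over $U_0 \cap U_1$. This does not follow. The two applications of Theorem~\ref{thm:rigidity} (one centered at $0$, one at $1$) construct two \emph{a priori unrelated} $\bP^1$-bundle structures $X_{U_0 \cap U_1} \to \bP^2 \times (U_0 \cap U_1)$: the construction in the rigidity theorem depends on a choice of line bundle $\sB$ on $\hcob{X}_{\reg}$ restricting to $\eta_i^* \sO_{\bP^2}(1)$ on $X_i$, and there is no reason for the $\sB$ chosen at $i=0$ to restrict on $X_1$ to $\eta_1^*\sO_{\bP^2}(1)$. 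Knowing that two rank-two bundles on $\bP^2$ have abstractly isomorphic projectivizations does \emph{not} force them to differ by a twist; they could a priori correspond to two genuinely different $\bP^1$-bundle structures on the same threefold.

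Closing this gap is exactly the role of Theorem~\ref{thm:uniq} (uniqueness of bundle structure), which you never invoke. That theorem shows that for bundles of type $d > 3 + c_1$ the $\bP^1$-bundle structure on $\bP_{\bP^2}(\sE)$ is unique up to composing with an automorphism $\psi$ of $\bP^2$ (which is why Step 2 of the paper arranges $D \gg 3+c_1$). Applying it at a point $t$ in the overlap yields $\psi \in \Aut(\bP^2)$ with $\sE_{U_0}|_t \cong \psi^*(\sE_{U_1}|_t) \otimes \sL$; the Chern-class normalization then forces $\sL \cong \sO_{\bP^2}$, and since generic splitting type is preserved under $\psi^*$ one finally obtains $d_i = d_j$. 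Your closing paragraph mischaracterizes the difficulty — the rigidity theorem does not ``pin down the rank-two bundle up to twist'' — so the ``upgrade'' you propose (exploiting $\Pic(\bP^2) \cong \bZ$ plus the Chern-class normalization) is insufficient without the uniqueness theorem as input. The remaining steps (normalization of $c_1$, choice of the $\sE_j$ via Proposition~\ref{prop:stroemmeX}, constancy of generic splitting type on a dense open, cofiniteness of the relevant opens) are correct and essentially identical to the paper's argument.
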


Before proving Theorem~\ref{thm:a1cobx} in Section~\ref{pf:thm:a1cobx} below, we
draw an immediate corollary and add a few comments.

\begin{cor}\label{cor:equiv}
  Direct $\bA¹$-$h$-cobordism fails to be an equivalence relation.
\end{cor}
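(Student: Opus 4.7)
The plan is to combine Proposition~\ref{prop:cobordismclass} with Theorem~\ref{thm:a1cobx} by specializing to Chern classes that fall within the scope of both results. I would fix $c_1 = 0$ and $c_2 = 0$, so that $d = 0$ trivially satisfies the hypothesis $d^2 - dc_1 + c_2 = 0$ of Proposition~\ref{prop:cobordismclass}. For this choice, the proposition asserts that any two rank-two bundles $\sE$ and $\sF$ on $\bP^2$ with vanishing Chern classes yield $\bA^1$-$h$-cobordant projective bundles $\bP_{\bP^2}(\sE)$ and $\bP_{\bP^2}(\sF)$, that is, bundles which are connected by a (possibly long) chain of direct $\bA^1$-$h$-cobordisms.

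At the same time, Theorem~\ref{thm:a1cobx} applied to the same choice of Chern classes produces an infinite family $(\sE_j)_{j \in \bN}$ of rank-two bundles with $c_1 = c_2 = 0$ such that no two of the projectivizations $\bP_{\bP^2}(\sE_j)$ are directly $\bA^1$-$h$-cobordant. Two such families of statements coexist peacefully only if the chain-equivalence relation strictly refines the relation of being directly cobordant.

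To conclude, I would argue by contradiction. Direct $\bA^1$-$h$-cobordism is always reflexive (take the trivial cobordism $X \times \bA^1 \to \bA^1$) and symmetric (compose with the automorphism $t \mapsto 1-t$ of $\bA^1$); the only potentially missing property is transitivity. If direct $\bA^1$-$h$-cobordism were an equivalence relation, then it would coincide with the equivalence relation it generates, so the chain of direct $\bA^1$-$h$-cobordisms connecting any two $\bP_{\bP^2}(\sE_j)$ and $\bP_{\bP^2}(\sE_{j'})$ guaranteed by Proposition~\ref{prop:cobordismclass} would collapse to a single direct $\bA^1$-$h$-cobordism, contradicting Theorem~\ref{thm:a1cobx}. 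There is no real obstacle at this stage: the substantive content lies entirely in the two preceding results, and once both are available, this corollary is a purely formal consequence extracting the failure of transitivity.
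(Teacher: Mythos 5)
Your proposal is correct and follows essentially the same approach as the paper: both deduce the corollary by noting that Proposition~\ref{prop:cobordismclass} makes all $\bP_{\bP²}(\sE_j)$ with the relevant Chern classes $\bA¹$-$h$-cobordant, while Theorem~\ref{thm:a1cobx} forbids any two of them from being directly $\bA¹$-$h$-cobordant. Your added remark that reflexivity and symmetry hold trivially (so the failure must lie in transitivity) is a useful elaboration that the paper leaves implicit, but it does not change the argument.
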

\begin{proof}
  Fix integers $c_1$, $c_2$ and assume there exists an integer $d$ such that
  $d²-dc_1+c_2=0$.  By Proposition~\ref{prop:cobordismclass}, any two bundles
  with these Chern classes will be $\bA¹$-$h$-cobordant.  However, by
  Theorem~\ref{thm:a1cobx}, there is an infinite set of bundles no two of which
  are directly $\bA¹$-$h$-cobordant.
\end{proof}

\begin{rem}
  Note that $h$-cobordism of smooth manifolds {\em is} an equivalence relation:
  the obvious composition of two $h$-cobordisms is not a smooth manifold and not
  parametrized by the unit interval, but it can be smoothed and re-parametrized.
  The above shows that such a smoothing is not, in general, possible in
  algebraic geometry.
\end{rem}

\begin{question}
  ---
  \begin{enumerate}
  \item If $X$ is any smooth projective variety that is $\bA¹$-$h$-cobordant to
    a $\bP¹$-bundle over $\bP²$, does $X$ have the structure of a $\bP¹$-bundle
    over $\bP²$?  It seems likely to us that the answer is no: the examples
    coming to mind are non-trivial rank three vector bundles over $\bP¹$
    deformable to the trivial one.

  \item Do there exist varieties that have the $\bA¹$-homotopy type of a
    projective bundle but are not $\bA¹$-$h$-cobordant to a projective bundle?

  \item The techniques developed here do not provide an $\bA¹$-$h$-cobordism
    classification for projective space bundles that are not deformable to split
    bundles since we do not know the $\bA¹$-concordance classification of such
    bundles.  However, non-deformability results for vector bundles, would not
    imply non-existence of $\bA¹$-$h$-cobordisms: the $\bA¹$-$h$-cobordisms
    could go through singular fibers or simply fibers which have no projective
    bundle structure, and there are presently no methods to
    prove that a map with singular fibers is an $\bA¹$-$h$-cobordism.
  \end{enumerate}
\end{question}

\subsection*{Proof of Theorem~\ref*{thm:a1cobx}}
\label{pf:thm:a1cobx}

The proof of Theorem~\ref{thm:a1cobx} relies on Strømme's results concerning
deformations of vector bundles, as outlined in Section~\vref{ssec:deformation}.

\subsubsection*{Step 1: Simplification of Chern numbers}

Given any rank-two bundle $\sE$ on $\bP²$ and any invertible sheaf $\sL ∈
\Pic(\bP²)$, then $\bP_{\bP²}(\sE)$ and $\bP_{\bP²}(\sE \otimes \sL)$ are
canonically isomorphic.  To prove Theorem~\ref{thm:a1cobx}, it will therefore
suffice to consider the case where $c_1 ∈ \{0,-1\}$.

\subsubsection*{Step 2: Construction of bundles ${\sE_j}$}

Fix a number $c_2 ∈ \bN$, and recall from Proposition~\ref{prop:stroemmeX}
that there exists a (large) number $D \gg 3+c_1$, with the following property.
Given any number $j ∈ \bN$, there exists a rank-two vector bundle $\sE_j$ on
$\bP²$ with Chern classes $c_1$ and $c_2$ and with splitting type $d(j) := D+j$
that does not appear as a fiber in any family of bundles on $\bP²$ that is
generically of type less than $d(j)$.  Choose such $D$ and $(\sE_j)_{j ∈ \bN}$
and fix that choice for the remainder of the proof.

We aim to show that no two of the $\bP¹$-bundles $\bP_{\bP²}(\sE_j)$ are
directly $\bA¹$-$h$-cobordant.  We argue by contradiction and assume that the
following holds.

\begin{assumption}
  There exist two distinct numbers $a_0, a_1 ∈ \bN$ and a direct
  $\bA¹$-$h$-cobordism $f: \hcob{X} → \bA¹$ where $X_0 := f^{-1}(0) \cong
  \bP_{\bP²}(\sE_{a_0})$ and $X_1 := f^{-1}(1) \cong \bP_{\bP²}(\sE_{a_1})$.
\end{assumption}

\begin{rem}\label{rem:stue}
  Since $a_0 \ne a_1$, it follows from construction that $d(a_0) \ne d(a_1)$.
\end{rem}

\subsubsection*{Step 3: Extending the bundles to open neighborhoods}

If $i ∈ \{0,1\}$ is any given index, it follows from the deformation rigidity of
$\bP¹$-bundles over $\bP²$, Theorem~\vref{thm:rigidity}, that there exist open
neighborhoods $U_i = U(i) ⊆ \bA¹$, rank-two vector bundles $\sE_{U_i}$ over
$\bP² ⨯ U_i$ and commutative diagrams as follows,
$$
\xymatrix{ %
  \hcob{X}_{U_i} \ar[rrrr]^{f_{U_i}} \ar@{<->}[d]_{φ_{U_i}}^{\cong} &&&& U_i \ar@{<->}[d]^{=} \\
  \bP_{\bP² ⨯ U_i}(\sE_{U_i}) \ar[rr]_{\text{bundle map}}^{α_{U_i}} && \bP² ⨯ {U_i} \ar[rr]_{\text{projection}}^{β_{U_i}} && U_i \\
  \bP_{\bP²}(\sE_{a_i}) \ar[rr]_{\text{bundle map}}^{α_i} \ar@{^(->}[u] && \bP² \ar[rr]_{\text{constant}}^{β_i} \ar@{^(->}[u] && \{i\} \ar@{^(->}[u] \\
}
$$
where $\hcob{X}_{U_i} := f^{-1}(U_i)$ and $f_{U_i} := f|_{\hcob{X}_{U_i}}$.

Using the semicontinuity of splitting types, Fact~\vref{fact:semincontST}, and
using the assumption that the bundles $\sE_{a_i}$ do not appear as a fiber in
any family that is generically of type less than $d(a_i)$, we are free to shrink
the open sets $U_i$ and assume that the following holds in addition.

\begin{asswlog}\label{asswlog:stcf}
  The generic splitting type is constant in the families $\sE_{U_i}$.  More
  precisely, given any closed point $t ∈ U_i$, then the bundle $\sE_{U_i}|_{\bP²
    ⨯ \{t\}}$ has generic splitting type $d(a_i)$.
\end{asswlog}

\subsubsection*{Step 4: End of proof}

Choose any closed point $t ∈ U_0 ∩ U_1$.  We obtain identifications $\bP_{\bP²}
(\sE_{a_0}) \cong X_t \cong \bP_{\bP²} (\sE_{a_1})$.  Now, since the splitting
types $d(a_i)$ are larger than $3+c_1$, if follows from the uniqueness of the
bundle structure, Theorem~\vref{thm:uniq}, that there exists an automorphism $ψ
∈ \Aut(\bP²)$ and a commutative diagram,
$$
\xymatrix{%
  \bP_{\bP²} (\sE_{a_0}) \ar@{<->}[r]^(.6){\cong} \ar[d]_{\text{bundle map}} & X_t \ar@{<->}[r]^(.4){\cong} & \bP_{\bP²} (\sE_{a_1}) \ar[d]^{\text{bundle map}} \\
  \bP² \ar[rr]_{ψ}^{\cong} && \bP².
}
$$
It follows that the bundle $\sE_{a_0}$ and the pull-back $ψ^* \sE_{a_1}$ differ
only by the twist with a suitable line bundle, say $\sL ∈ \Pic(\bP²)$.
However, since the Chern classes of the two bundles agree, it follows that $\sL
\cong \sO_{\bP²}$, hence $\sE_{a_0} \cong ψ^* \sE_{a_1}$.  In particular, we
obtain that the generic splitting types of $\sE_{a_0}$ and $\sE_{a_1}$ agree.
By Assumption~\ref{asswlog:stcf}, this means that $d(a_0) = d(a_1)$, which
contradicts Remark~\ref{rem:stue}.  This finishes the proof of
Theorem~\ref{thm:a1cobx}.  \qed

%
%

\section{Complex realization}
\label{sec:07}

In the final section, we specialize to the case $k = \bC$ and compare the
algebraic classification results proven in this paper to their complex-geometric
counterparts.

\subsection{Comparison maps}
We have the following diagram of sets of equivalence classes of
projectivized rank-two bundles on $\bP²$, where the left column contains sets of
algebraic varieties modulo algebraic equivalence relations and the right column
contains complex manifolds modulo complex-geometric equivalence relations,
\begin{equation}\label{eq:CRD}
  \begin{tikzpicture}[baseline=(current bounding box.center)]
    \matrix[row sep=1cm,column sep=2cm] {
      \node (alghc) {$\begin{Bmatrix}
          \text{projective bundles } \bP_{\bP²(\bC)}(\sE)\\
          \text{modulo $\bA¹$-$h$-cobordism}
        \end{Bmatrix}$};
      &\node (DEF) {$\begin{Bmatrix}
          \textrm{complex manifolds } \bP_{\bC\bP²}(\sE)\\
          \textrm{modulo deformation equivalence}
        \end{Bmatrix}$}; \\
      &\node (DIFF) {$\begin{Bmatrix}
          \textrm{complex manifolds } \bP_{\bC\bP²}(\sE)\\
          \textrm{modulo diffeomorphism}
        \end{Bmatrix}$}; \\
      \node (algwe) {$\begin{Bmatrix}
          \text{projective bundles } \bP_{\bP²(\bC)}(\sE)\\
          \text{modulo $\bA¹$-weak equivalence}
        \end{Bmatrix}$};
      &\node (HTPY) {$\begin{Bmatrix}
          \textrm{complex manifolds } \bP_{\bC\bP²}(\sE)\\
          \textrm{modulo homotopy equivalence}
        \end{Bmatrix}$}; \\
    };
    \path (alghc) edge [->] node [above] {$φ_1$} node [below]
    {Prop.~\ref{prop:cobordismclass}} (DEF);
    \path (DEF) edge [->] node [right] {$φ_4$} node [left]
    {Props.~\ref{prop:hodef}, \ref{prop:hodiff}} (DIFF);
    \path (DIFF) edge [->] node [right] {$φ_5$} node [left]
    {Prop.~\ref{prop:hodiff}} (HTPY);
    \path (alghc) edge [->] node [left] {$φ_3$} node [right]
    {Prop.~\ref{prop:cobordismclass}} (algwe);
    \path (algwe) edge [->] node [above] {$φ_2$} node [below]
    {Prop.~\ref{prop:XX}} (HTPY);
  \end{tikzpicture}
\end{equation}

\begin{explanation}
  The two horizontal arrows $φ_1$ and $φ_2$ are both induced by complex
  realization $X \mapsto X(\bC)$, while the vertical arrows $φ_3$, $φ_4$ and
  $φ_5$ are induced by the identity map.  We briefly show that each of these
  maps is in fact well-defined.
  \begin{enumerate}
  \item If $f : X → \bA¹$ is any $\bA¹$-$h$-cobordism, then $f$ is a smooth
    morphism over a Zariski neighborhood $U$ of $\{0,1\} ⊆ \bA¹$, see
    Remark~\ref{rem:2}.  The complex realization $f(\bC) : X(\bC) → \bC$,
    restricted to $U(\bC)$ provides a deformation from $X_0(\bC)$ to $X_1(\bC)$.
    It follows that $φ_1$ is well-defined.

  \item Recall that the assignment that sends a smooth $k$-scheme $X$ to
    $X(\bC)$ equipped with its usual structure of a complex manifold extends to
    a ``complex realization'' functor $\mathfrak{R}_{ι}$ from the
    $\bA¹$-homotopy category $\ho{k}$ to the usual homotopy category of
    topological spaces, \cite[§~3.3]{MV}.  Using a slightly different model
    structure on $\Spc_k$, Dugger and Isaksen showed in
    \cite[Thm.~5.2]{DuggerIsaksen} that the complex realization functor between
    homotopy categories is actually part of a Quillen adjunction.  In
    particular, in their model structure, $\bA¹$-weak equivalences between
    smooth schemes are sent to weak equivalences of the associated topological
    spaces.  It follows that $φ_2$ is well-defined.

  \item If $f:X→\bA¹$ is an $\bA¹$-$h$-cobordism, then the inclusions
    $f^{-1}(0)→ X$ and $f^{-1}(1) → X$ are $\bA¹$-weak equivalences by
    definition.  In particular, $f^{-1}(0)$ and $f^{-1}(1)$ have the same weak
    $\bA¹$-homotopy type.  It follows that $φ_3$ is well-defined.

  \item Deformations as complex manifolds induce diffeomorphisms, by Ehresmann's
    fibration theorem.  This shows that $φ_4$ is well-defined.

  \item Diffeomorphism of complex manifolds are homotopy equivalences, hence
    $φ_5$ is well-defined.
  \end{enumerate}
\end{explanation}

\subsection{Homotopy classification}

Our results imply that the homotopy classification results in the above diagram
agree.  In other words, we show that the $\bA¹$-homotopy invariants do not
contain any more information than the classical algebraic-topological invariants
for $\bP¹$-bundles over $\bP²$.

\begin{prop}\label{prop:XX}
  The map $φ_2$ in Diagram~\eqref{eq:CRD} is a bijection.
\end{prop}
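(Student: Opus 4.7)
The plan is to verify both directions separately, using Theorem~\ref{thm:a1homotopyp1bundles} to reduce the problem to a statement about orbits of Chern class pairs. Surjectivity of $\varphi_2$ is immediate from the construction of the target set: every representative on the right is of the form $\bP_{\bC\bP²}(\sE(\bC))$ for an algebraic rank-two bundle $\sE$ on $\bP²$, and is therefore hit by $[\bP_{\bP²}(\sE)]$ from the source. The real content of the statement lies in injectivity.

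For injectivity, I would argue as follows. Suppose $X := \bP_{\bP²}(\sE)(\bC)$ and $Y := \bP_{\bP²}(\sF)(\bC)$ are homotopy equivalent, so that their integral singular cohomology rings are isomorphic as graded rings. The topological projective bundle formula, applied with the topological Chern classes of $\sE(\bC)$ (which coincide under the cycle class map with the algebraic Chern classes $c_1, c_2$), gives the same presentation
\[
H^* \bigl( X;\, \bZ \bigr) \cong \factor{\bZ[H,τ]}{\langle H^3,\, τ² + c_1 Hτ + c_2 H² \rangle}
\]
as that obtained for the Chow ring in Computation~\ref{comp:chow}, and similarly for $Y$. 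The cubic form $Φ \colon H²(X;\bZ) → H^6(X;\bZ) \cong \bZ$ induced by cup product therefore has discriminant $c_1² - 4c_2$, and the argument of Lemma~\ref{lem:chowring}---which appeals only to the classification of integral binary cubic forms via \cite[Ex.~5, Prop.~18]{okonek:vandeven} and is insensitive to whether the ring is a Chow ring or a singular cohomology ring---shows that this discriminant is a well-defined invariant of the graded ring structure. Consequently $c_1² - 4c_2 = {c_1'}² - 4c_2'$.

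I would conclude exactly as in the second half of the proof of Theorem~\ref{thm:a1homotopyp1bundles}. After twisting $\sE$ and $\sF$ by appropriate line bundles (an operation that acts on Chern classes by the $\bZ$-action of Corollary~\ref{cor:pgl2torsorsonp2}, hence preserves the orbit we are trying to identify), one may assume $c_1, c_1' ∈ \{0,-1\}$. The identity $c_1² - 4c_2 = {c_1'}² - 4c_2'$ then forces $c_1 \equiv c_1' \pmod 2$, hence $c_1 = c_1'$ and finally $c_2 = c_2'$. Therefore $(c_1, c_2)$ and $(c_1', c_2')$ lie in the same $\bZ$-orbit, and Theorem~\ref{thm:a1homotopyp1bundles} produces an $\bA¹$-weak equivalence $\bP_{\bP²}(\sE) \aoequiv \bP_{\bP²}(\sF)$, proving injectivity. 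The only external inputs I need are the compatibility of algebraic and topological Chern classes and the topological projective bundle formula, both of which are classical; I do not foresee a genuine obstacle in this plan.
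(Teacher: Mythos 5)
Your proposal is correct and follows essentially the same route as the paper: surjectivity via GAGA (which you appeal to implicitly when asserting that every holomorphic bundle on $\bC\bP^2$ is algebraic), and injectivity by extracting the discriminant of the cubic form on $H^2$ and concluding via the second half of Theorem~\ref{thm:a1homotopyp1bundles}. The only cosmetic difference is that you work directly with singular cohomology rings via the topological projective bundle formula, whereas the paper first uses the cycle class map isomorphism to pass to Chow rings and then cites Lemma~\ref{lem:chowring} verbatim; both observations amount to the same thing for these varieties.
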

\begin{proof}
  Surjectivity of $φ_2$ follows by GAGA, since every projectivization of a
  holomorphic vector bundle over $\bC\bP²$ has an algebraic structure.
  Injectivity can be seen as follows.  Given two varieties
  $\bP_{\bC\bP²}(\sE_1)$, $\bP_{\bC\bP²}(\sE_2)$ whose complex realizations are
  homotopy equivalent, then their cohomology rings are isomorphic.  In the
  situation at hand, the cycle class maps are isomorphisms, so we have an
  isomorphism of Chow rings.  As in the proof of
  Theorem~\ref{thm:a1homotopyp1bundles}, the isomorphism of Chow rings implies
  that the Chern classes of $\sE_1$ and $\sE_2$ are in the same
  $\Pic(\bP²)$-orbit, and hence $\bP_{\bC\bP²}(\sE_1)$ and
  $\bP_{\bC\bP²}(\sE_2)$ are $\bA¹$-weakly equivalent.
\end{proof}

\begin{rem}
  Alternatively, one can use a proof as in Section~\ref{sec:04} together with
  Peterson's classification of complex rank $n$ vector bundles on projective
  spaces, \cite{peterson}, to see that projectivizations of holomorphic vector
  bundles over $\bC\bP²$ are classified up to homotopy equivalence by the exact
  same $\bZ$-orbits of $\bZ \cdot H \oplus \bZ \cdot H²$.
\end{rem}

\subsection{Complex-geometric classification}

The vertical maps $φ_4$ and $φ_5$ of Diagram~\eqref{eq:CRD} are also bijections.
In other words, all relevant equivalence relations agree on the set of
isomorphism classes of $\bC\bP¹$-bundles over $\bC\bP²$: cohomology ring
isomorphisms, homotopy equivalence, diffeomorphism and deformation equivalence.

\begin{prop}\label{prop:hodef}
  The composition $φ_5 ◦ φ_4$ the maps in Diagram~\eqref{eq:CRD} is a bijection.
\end{prop}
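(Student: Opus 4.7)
The plan is to verify that $\varphi_5 \circ \varphi_4$ is a bijection by showing it is both surjective and injective. Surjectivity is immediate: both the source and target of this composition consist of the same underlying set of complex manifolds (namely, $\bC\bP^1$-bundles over $\bC\bP^2$), modulo two different equivalence relations, and the composition is induced by the identity on representatives. Refining an equivalence relation always yields a surjective map between quotient sets.

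For injectivity, suppose $\bP_{\bC\bP^2}(\sE_1)$ and $\bP_{\bC\bP^2}(\sE_2)$ are homotopy equivalent. By GAGA, both $\sE_i$ may be taken to be algebraic. Proposition~\ref{prop:XX} then identifies this homotopy equivalence with an $\bA^1$-weak equivalence of the corresponding algebraic varieties, and Theorem~\ref{thm:a1homotopyp1bundles} implies that the Chern class pairs $(c_1(\sE_i), c_2(\sE_i))$ lie in the same $\Pic(\bP^2)$-orbit. Since twisting a rank-two bundle by a line bundle leaves its projectivization unchanged, I may replace $\sE_1$ by a suitable twist and assume without loss of generality that $c_1(\sE_1) = c_1(\sE_2)$ and $c_2(\sE_1) = c_2(\sE_2)$.

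Under this assumption, Proposition~\ref{prop:explicit1-x} provides a chain of algebraic deformations of vector bundles connecting $\sE_1$ to $\sE_2$, parametrized over a connected but possibly reducible base. Projectivizing this chain yields a chain of smooth proper algebraic $\bP^1$-bundles over the components of the base. Analytifying each such family via GAGA produces a chain of proper holomorphic submersions, i.e.\ direct complex-analytic deformations of the fibers, which together show that $\bP_{\bC\bP^2}(\sE_1)$ and $\bP_{\bC\bP^2}(\sE_2)$ are deformation equivalent. The main technical point is the compatibility of projectivization with analytification and the verification that the analytifications of the algebraic families are genuine complex-analytic deformations; both are routine applications of GAGA and flat base change, and require no new input beyond the results already established in the paper.
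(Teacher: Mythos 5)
Your proof is correct and takes essentially the same route as the paper: surjectivity is immediate since both sides are quotients of the same set by increasingly coarse equivalence relations, and injectivity reduces via Proposition~\ref{prop:XX} to having equal Chern classes after a twist, at which point Proposition~\ref{prop:explicit1-x} supplies the chain of deformations. Your added remarks about compatibility of projectivization with analytification and GAGA make explicit some points the paper leaves implicit, but the substance of the argument is identical.
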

\begin{proof}
  Surjectivity is clear because $φ_5 ◦ φ_4$ is induced from the identity map.
  If two bundles $\bP_{\bC\bP²}(\sE_1)$, $\bP_{\bC\bP²}(\sE_2)$ are homotopy
  equivalent, then their Chern classes are in the same $\Pic(\bP²)$-orbit,
  cf.~Proposition~\ref{prop:XX} and its proof.  Tensoring by a line bundle, we
  can assume that the Chern classes of $\sE_1$ and $\sE_2$ agree.
  Proposition~\ref{prop:explicit1-x} asserts that any two rank-two vector
  bundles over $\bP²$ whose Chern classes agree are equivalent by the
  equivalence relation generated by deformations over an irreducible base.  This
  implies the claim.
\end{proof}

\begin{prop}\label{prop:hodiff}
  The map $φ_5$ in Diagram~\eqref{eq:CRD} is a bijection.
\end{prop}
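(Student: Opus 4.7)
The plan is to deduce Proposition~\ref{prop:hodiff} as an essentially formal consequence of Proposition~\ref{prop:hodef}, together with the observation that both maps $φ_4$ and $φ_5$ are induced by the identity on the underlying set of isomorphism classes of projectivized rank-two bundles over $\bC\bP²$.

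First, surjectivity of $φ_5$ is immediate: since $φ_5$ is induced by the identity, every homotopy equivalence class of the form $[\bP_{\bC\bP²}(\sE)]$ is the image of its own diffeomorphism class.

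For injectivity, I would argue as follows. Suppose $\bP_{\bC\bP²}(\sE_1)$ and $\bP_{\bC\bP²}(\sE_2)$ are homotopy equivalent. Consider their deformation equivalence classes in the source of $φ_4$. Both classes map under $φ_5 ◦ φ_4$ to the common homotopy equivalence class $[\bP_{\bC\bP²}(\sE_1)] = [\bP_{\bC\bP²}(\sE_2)]$. By Proposition~\ref{prop:hodef}, the composite $φ_5 ◦ φ_4$ is a bijection, hence injective, so the deformation equivalence classes of $\bP_{\bC\bP²}(\sE_1)$ and $\bP_{\bC\bP²}(\sE_2)$ coincide. By Ehresmann's fibration theorem, this deformation equivalence yields a diffeomorphism $\bP_{\bC\bP²}(\sE_1) \cong_{\text{diffeo}} \bP_{\bC\bP²}(\sE_2)$ (this is precisely the well-definedness of $φ_4$). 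Therefore the two bundles already represent the same diffeomorphism class, which proves injectivity of $φ_5$.

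There is no genuine obstacle in this argument; all the substantive content has been carried out in Proposition~\ref{prop:hodef} (and ultimately in Proposition~\ref{prop:explicit1-x}, which supplies the deformations connecting any two bundles with matching Chern classes). The present statement is a purely formal consequence of the fact that in a composition $A \xrightarrow{φ_4} B \xrightarrow{φ_5} C$ of surjections whose composite is a bijection, each factor is itself a bijection.
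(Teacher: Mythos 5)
Your argument is correct and coincides with the paper's primary proof: the paper simply observes that $φ_4$ is surjective and then, since $φ_5 ◦ φ_4$ is a bijection by Proposition~\ref{prop:hodef}, concludes that $φ_5$ is a bijection. (The paper additionally sketches an alternative, more concrete route via the diffeomorphism classification of Okonek and van de Ven, but your formal argument matches the first, ``easiest'' one the paper gives.)
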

\begin{proof}
  The easiest way to see this is to note that the map from projective bundles
  modulo deformation equivalence to projective bundles modulo diffeomorphism is
  surjective.

  We also outline how the bijectivity can be deduced from the work of Okonek and
  van de Ven \cite{okonek:vandeven}.  First note that from the cohomology ring
  computation in Computation~\ref{comp:chow} we see that the integral homology
  of $\bP_{\bC\bP²}(\sE)$ is torsion free, and the only non-trivial Betti
  numbers are $b_2=b_4=2$ and $b_0=b_6=1$.  In the case of a complex manifold,
  $w_2$ can be obtained as the mod $2$ reduction of $c_1$, and the second Chern
  class determines $p_1$ via $c_2=\frac{1}{2}(c_1²-p_1)$, cf.\
  \cite[Prop.~8]{okonek:vandeven}.  The computations of
  \cite[Prop.~15]{okonek:vandeven} show that the diffeomorphism invariants of
  the projective bundle are determined completely by the Chern classes of the
  projective bundle.  Finally, \cite[Prop.~17]{okonek:vandeven} (or rather its
  proof) shows that all invariants are realizable by projective bundles.
  Summing up, the results of Okonek and van de Ven cited above show that
  diffeomorphism classes of $\bC\bP¹$-bundles over $\bC\bP²$ are described by
  the same invariants as the homotopy equivalence classes of such bundles.  This
  proves the claim.
\end{proof}

\begin{rem}
  The generic splitting type of an unstable vector bundle can also be seen from
  the corresponding projective bundle.  Given a $\bP¹$-bundle over $\bP²$, its
  restriction to a projective line $\ell ⊆ \bP²$ is a Hirzebruch surface
  $\bF_a$.  Generically, it is the Hirzebruch surface $\bF_{c_1-2d}$ if $d$ is
  the generic splitting type.  There are some lines where we get a different
  Hirzebruch surface $\bF_{e}$, with $e\equiv c_1-2d\mod 2$.  These lines, when
  viewed as points of the dual projective plane, form a curve, the \emph{curve
    of jumping lines}.  The above description provides one explanation why no
  difference between such projective bundles is visible in $\bA¹$-homotopy or
  the diffeomorphism type - the differences between the Hirzebruch surfaces
  $\bF_d$ and $\bF_{d'}$ for $d\equiv d'\mod 2$ are not visible in either
  setting.
\end{rem}

\subsection{Relation to \texorpdfstring{$\bA¹$-$h$}{A\textonesuperior-h}-cobordism classification}

In the case of bundles that are deformable to split bundles, that is, in the
case where there exists an integer $d$ with $d²-dc_1+c_2=0$,
Proposition~\ref{prop:cobordismclass} shows that the two remaining arrows in the
diagram are bijections as well.  The $\bA¹$-$h$-cobordism classification agrees
with the $\bA¹$-homotopy classification as well as all the complex-geometric
equivalence relations.  It is, however, not clear (to us) what happens for
bundles that are not deformable to split bundles.


\end{document}